\pdfoutput=1
\RequirePackage{ifpdf}
\ifpdf % We~are running pdfTeX in pdf mode
\documentclass[pdftex]{sigma}
\else
\documentclass{sigma}
\fi

\usepackage{overpic}

\numberwithin{equation}{section}

\newtheorem{Theorem}{Theorem}[section]
\newtheorem*{Theorem*}{Theorem}
\newtheorem{Corollary}[Theorem]{Corollary}
\newtheorem{Lemma}[Theorem]{Lemma}
\newtheorem{Proposition}[Theorem]{Proposition}
 { \theoremstyle{definition}
\newtheorem{Definition}[Theorem]{Definition}

\newtheorem{Example}[Theorem]{Example}
\newtheorem{Remark}[Theorem]{Remark} }

\newcommand{\R}{\mathbb{R}}
\newcommand{\id}{\mathrm{id}}

\begin{document}

\allowdisplaybreaks

\renewcommand{\thefootnote}{}

\newcommand{\arXivNumber}{2306.04015}

\renewcommand{\PaperNumber}{035}

\FirstPageHeading

\ShortArticleName{Scalar Curvature Rigidity of Warped Product Metrics}

\ArticleName{Scalar Curvature Rigidity of Warped Product Metrics\footnote{This paper is a~contribution to the Special Issue on Differential Geometry Inspired by Mathematical Physics in honor of Jean-Pierre Bourguignon for his 75th birthday. The~full collection is available at \href{https://www.emis.de/journals/SIGMA/Bourguignon.html}{https://www.emis.de/journals/SIGMA/Bourguignon.html}}}

\Author{Christian B\"AR~$^{\rm a}$, Simon BRENDLE~$^{\rm b}$, Bernhard HANKE~$^{\rm c}$ and Yipeng WANG~$^{\rm b}$}

\AuthorNameForHeading{C.~B\"ar, S.~Brendle, B.~Hanke and Y.~Wang}

\Address{$^{\rm a)}$~Institut f\"ur Mathematik, Universit\"at Potsdam, 14476 Potsdam, Germany}
\EmailD{\href{mailto:cbaer@uni-potsdam.de}{cbaer@uni-potsdam.de}}
\URLaddressD{\url{https://www.math.uni-potsdam.de/baer}}

\Address{$^{\rm b)}$~Department of Mathematics, Columbia University, New York NY 10027, USA}
\EmailD{\href{mailto:simon.brendle@columbia.edu}{simon.brendle@columbia.edu}, \href{mailto:yw3631@columbia.edu}{yw3631@columbia.edu}}

\Address{$^{\rm c)}$~Institut f\"ur Mathematik, Universit\"at Augsburg, 86135 Augsburg, Germany}
\EmailD{\href{mailto:hanke@math.uni-augsburg.de}{hanke@math.uni-augsburg.de}}
\URLaddressD{\url{https://www.math.uni-augsburg.de/diff/hanke}}

\ArticleDates{Received June 09, 2023, in final form April 08, 2024; Published online April 18, 2024}

\Abstract{We show scalar-mean curvature rigidity of warped products of round spheres of dimension at least~2 over compact intervals equipped with strictly log-concave warping functions. This generalizes earlier results of Cecchini--Zeidler to all dimensions. Moreover, we show scalar curvature rigidity of round spheres of dimension at least~3 with two antipodal points removed. This resolves a problem in Gromov's ``Four Lectures'' in all dimensions. Our arguments are based on spin geometry.}

\Keywords{scalar curvature; warped product; bandwidth estimate; Llarull's theorem; holographic index theorem}

\Classification{53C20; 53C21; 53C27}

\begin{flushright}
\begin{minipage}{65mm}
\it
Dedicated to Jean-Pierre Bourguignon\\
 on the occasion of his 75th birthday
 \end{minipage}
\end{flushright}

\renewcommand{\thefootnote}{\arabic{footnote}}
\setcounter{footnote}{0}

\section{Introduction}

In this paper, we study rigidity results for metrics with lower scalar curvature bounds.
One of the first results of this kind is the famous rigidity theorem of Llarull \cite{Llarull}.
Let $g_{S^n}$ denote the standard round metric on $S^n$ with scalar curvature $n(n-1)$. Llarull showed that, if $g$ is a metric on $S^n$ with $g \geq g_{S^n}$ and $R_g \geq n(n-1)$, then $g=g_{S^n}$.
The proof of Llarull's theorem uses Dirac operator techniques in an ingenious way, and is inspired by the fundamental work of Gromov and Lawson \cite{Gromov-Lawson1, Gromov-Lawson2}.

In an important paper, Cecchini and Zeidler extended this line of thought and proved scalar and mean curvature rigidity results for odd-dimensional manifolds with boundary where the comparison metric is not the round metric, but a warped product metric (see \cite[Section~10]{Cecchini-Zeidler}).
In the first part of the present paper we will remove the dimension parity assumption in some of their results.

Given a Riemannian manifold $(M,g)$ with boundary, we denote by $R_g$ the scalar curvature of $g$. Moreover, we denote by $\nu_g$ the outward unit normal with respect to $g$.
We denote by $H_g$ the mean curvature of $\partial M$ with respect to $g$, defined as the sum of the principal curvatures.
The sign convention for $H_g$ is such that the mean curvature vector is given by $-H_g\nu_g$.

Let $n>2$, let $\theta_- < \theta_+$ and let $\rho\colon [\theta_- , \theta_+] \to \R$ be a positive smooth function.
We consider the warped product metric
\begin{equation} \label{g_0}
 g_0 = {\rm d}\theta \otimes {\rm d}\theta + \rho^2(\theta) g_{S^{n-1}}
\end{equation}
on $S^{n-1} \times [\theta_- , \theta_+]$. The scalar curvature of $g_0$ is given by
\begin{equation} \label{scal_g_0}
 R_{g_0} = (n-1) \left( -2 \frac{\rho''(\theta)}{\rho(\theta)} + (n-2) \frac{1- \rho'(\theta)^2}{\rho(\theta)^2} \right),
\end{equation}
while the boundary mean curvature of $g_0$ is given by
\begin{equation} \label{mean_g_0}
 H_{g_0} = \pm (n-1) \frac{\rho'(\theta_{\pm})}{\rho(\theta_{\pm})}\qquad \text{along}\quad S^{n-1} \times \{ \theta_{\pm} \}
\end{equation}
(cf.\ \cite[Example~4.1]{Baer-Gauduchon-Moroianu}).

Our first result says that warped product metrics satisfy a scalar-mean curvature rigidity property, provided that the warping function is strictly logarithmically concave.

\begin{theorem} \label{spherical_band}
Let $n>2$, let $\rho\colon [\theta_- , \theta_+] \to \R$ be a positive smooth function such that ${(\log \rho)''<0}$.
Let $g_0$ denote the warped product metric in \eqref{g_0}.
Let $M$ be a compact, connected spin manifold of dimension $n$ with boundary $\partial M$.
Let $g$ be a Riemannian metric on $M$.
Suppose that $\Phi\colon (M,g) \to \big(S^{n-1} \times [\theta_-,\theta_+],g_0\big)$ is a smooth map with the following properties:
\begin{itemize}\itemsep=0pt
\item $\Phi(\partial M) \subset S^{n-1} \times \{\theta_+,\theta_-\}$,
\item $\Phi$ has non-zero degree,
\item $\Phi$ is $1$-Lipschitz,
\item $R_g \geq R_{g_0} \circ \Phi$ at each point in $M$, compare \eqref{scal_g_0},
\item $H_g \geq H_{g_0} \circ \Phi$ at each point in $\partial M$, compare \eqref{mean_g_0}.
\end{itemize}
Then $\Phi$ is a Riemannian isometry.
\end{theorem}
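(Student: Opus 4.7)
The plan is to follow the Gromov--Lawson--Llarull paradigm adapted to bandwidth estimates as in Cecchini--Zeidler, using a twisted spin-Dirac operator together with an index-theoretic argument that works independently of the parity of $n$. Let $SM$ denote the complex spinor bundle of $(M,g)$, and let $\mathcal{E}\to S^{n-1}\times[\theta_-,\theta_+]$ be the spinor bundle of the sphere factor $(S^{n-1},g_{S^{n-1}})$ pulled back along the projection onto $S^{n-1}$. Work with the twisted Dirac operator $D$ acting on sections of $SM\otimes\Phi^*\mathcal{E}$, equipped with Cecchini--Zeidler-type local boundary conditions along $\partial M$ that are compatible with the scalar-mean curvature bounds. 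The non-vanishing of $\deg\Phi$, combined with a holographic (boundary) index formula, would then produce a non-trivial solution $\sigma$ of the corresponding boundary value problem satisfying $D\sigma=0$.

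\textbf{Weitzenb\"ock estimate.} Apply the twisted Lichnerowicz formula $D^2=\nabla^*\nabla+\tfrac14 R_g+\mathcal{R}$, where $\mathcal{R}$ is the curvature endomorphism of the twist bundle $\Phi^*\mathcal{E}$. Using the explicit warped-product curvature tensor underlying \eqref{scal_g_0}, I would establish the pointwise estimate $\mathcal{R}+\tfrac14(R_{g_0}\circ\Phi)\,\id\geq 0$, with equality precisely when $\Phi$ is an infinitesimal isometry at the point in question. The two contributions to $\mathcal{R}$, namely the sphere-curvature term of order $(n-1)(n-2)/\rho^2$ and the radial-curvature term of order $-\rho''/\rho$, are bounded using the $1$-Lipschitz property of $\Phi$ and are matched exactly against the respective summands in \eqref{scal_g_0}. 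The hypothesis $(\log\rho)''<0$, rewritten as $\rho''\rho<(\rho')^2$, is precisely the ingredient that upgrades the resulting Llarull-type eigenvalue inequality to be strict away from pointwise isometries, which is what later yields rigidity.

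\textbf{Rigidity conclusion.} From $D\sigma=0$ and Green's identity for $D$ applied with the chosen boundary conditions one obtains
\[
0=\int_M|\nabla\sigma|^2+\tfrac14\int_M(R_g-R_{g_0}\circ\Phi)|\sigma|^2+\int_M\bigl\langle(\mathcal{R}+\tfrac14 R_{g_0}\circ\Phi)\sigma,\sigma\bigr\rangle+\tfrac12\int_{\partial M}(H_g-H_{g_0}\circ\Phi)|\sigma|^2.
\]
Each of the four summands is non-negative by the theorem's hypotheses together with the Weitzenb\"ock estimate, so each vanishes identically. In particular $\nabla\sigma\equiv 0$, hence $|\sigma|$ is constant and, being non-zero, is positive everywhere; the pointwise equality case of the curvature estimate then holds on all of $M$, forcing $\Phi$ to be a local Riemannian isometry. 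Together with connectedness of $M$ and the non-vanishing of $\deg\Phi$, this promotes $\Phi$ to a global Riemannian isometry.

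\textbf{Expected main obstacle.} The principal technical difficulty lies in the first step: arranging the index problem so that the index is non-zero for \emph{every} $n\geq 3$, not just the odd $n$ treated by Cecchini--Zeidler. This is exactly the place where the parity restriction of the earlier work arises, and a holographic (boundary) index theorem, pairing the bulk Dirac operator on $M$ against the induced operator on $\partial M$ twisted by $\Phi^*\mathcal{E}|_{\partial M}$, is needed to detect $\deg\Phi$ uniformly. A secondary delicate point is to carry out the curvature computation of Step~2 in such a way that \emph{strict} log-concavity of $\rho$, rather than mere concavity of $\rho$ itself, appears naturally and is what forces rigidity in the equality case.
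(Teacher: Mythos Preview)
Your outline has a genuine gap at the Weitzenb\"ock step. With the twist bundle you specify---the spinor bundle of the round $S^{n-1}$ pulled back along the sphere projection---the curvature endomorphism $\mathcal{R}$ sees only the spherical curvature: by the standard Llarull estimate one gets $\langle\mathcal{R}\sigma,\sigma\rangle\ge -\tfrac{(n-1)(n-2)}{4\rho^2(\Theta)}|\sigma|^2$ (writing $\Theta$ for the interval component of $\Phi$), and there is no ``radial-curvature term of order $-\rho''/\rho$'' because the pullback connection is flat in the $\theta$-direction. But $R_{g_0}$ in \eqref{scal_g_0} contains both $-2(n-1)\rho''/\rho$ and $-(n-1)(n-2)(\rho')^2/\rho^2$, which have no counterpart in $\mathcal{R}$. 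Substituting, your proposed operator inequality $\mathcal{R}+\tfrac14(R_{g_0}\circ\Phi)\,\id\ge 0$ would require $-2\rho''/\rho-(n-2)(\rho')^2/\rho^2\ge 0$, and this is not implied by log-concavity (for instance $\rho(\theta)=e^{-\theta^2}$ is strictly log-concave but the expression is negative once $\theta^2>1/n$). Consequently the displayed identity in your ``Rigidity conclusion'' does not have non-negative summands, and nothing forces $\nabla\sigma=0$.

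The missing ingredient is the Callias-type potential of Cecchini--Zeidler: set $\Psi=(\log\rho)'\circ\Theta$ and work with $D-\tfrac{in}{2}\Psi$ in place of $D$ and with the modified connection $\nabla^\Psi_X=\nabla_X+\tfrac{i}{2}\Psi\,X\cdot{}$ in place of $\nabla$. The extra zeroth-order terms this inserts into the integrated Weitzenb\"ock formula are exactly $\tfrac{n(n-1)}{4}\Psi^2$ and $-\tfrac{i(n-1)}{2}\nabla\Psi\cdot{}$, which supply the missing $(\rho')^2/\rho^2$ and $(\log\rho)''$ contributions and restore non-negativity. The spinor one obtains is then $\nabla^\Psi$-parallel (a generalized Killing spinor), not $\nabla$-parallel, and strict log-concavity enters through the chain-rule bound $|\nabla\Psi|\le(-\psi'(\Theta))\,|\nabla\Theta|\le -\psi'(\Theta)$: it is the strict positivity of $-\psi'$ that forces $|\nabla\Theta|=1$ in the equality analysis, not an eigenvalue inequality for $\mathcal{R}$. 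Finally, for even $n$ the index step requires a further device absent from your sketch: the paper passes to $M\times S^1$, uses a degree-one map $S^{n-1}\times S^1\to S^n$ so that the twist comes from an \emph{even}-dimensional sphere (where the holographic index theorem on the now even-dimensional boundary detects the degree), and then extracts the desired spinor on $M$ by sending the $S^1$-radius to infinity.
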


We note that $R_{g_0}$ in this theorem is not required to be non-negative.
For $n$ odd, Theorem~\ref{spherical_band} is implied by results of Cecchini--Zeidler, see \cite[Theorem~10.2]{Cecchini-Zeidler}.

Applying this discussion to annuli in simply-connected space forms as in \cite[Section~10]{Cecchini-Zeidler}, this removes the parity restriction in \cite[Corollaries~10.4 and~10.5]{Cecchini-Zeidler}.

\begin{Example}
If $0 < \theta_- < \theta_+ < \pi$ and $\rho(\theta) = \sin \theta$, then the warped product metric $g_0$ in \eqref{g_0} has constant scalar curvature $R_{g_0} = n(n-1)$. If $\theta_- < \theta_+$ and $\rho(\theta) = \sinh \theta$, then the warped product metric $g_0$ in \eqref{g_0} has constant scalar curvature $R_{g_0} = -n(n-1)$.
\end{Example}

\begin{Example}
The spatial Schwarzschild--de~Sitter metrics on $S^{n-1} \times [\theta_-, \theta_+]$ are rotationally symmetric and have scalar curvature equal to a positive constant.
Similarly, the spatial Schwarzschild--anti--de~Sitter metrics on $S^{n-1} \times [\theta_-, \theta_+]$ are rotationally symmetric and have scalar curvature equal to a negative constant.
These metrics can be expressed as warped products of the form $g_0 = {\rm d}\theta \otimes {\rm d}\theta + \rho^2(\theta) g_{S^{n-1}}$, see, e.g., \cite[p.~64]{Lee}.
If we restrict to an interval where $\log \rho$ is strictly concave, then we obtain the rigidity property in Theorem~\ref{spherical_band}.
\end{Example}

The second theme of our paper is a rigidity result for metrics on the sphere $S^n$ with two antipodal points removed.
This can be viewed as a limiting case of the band rigidity results treated in the first part of our paper.
This is related to a conjecture of Gromov \cite{Gromov}.
He conjectured that Llarull's theorem holds for metrics that are defined on the sphere $S^n$ with finitely many points removed.
In the special case of two antipodal punctures, Gromov sketched an argument based on $\mu$-bubbles (see \cite[Sections 5.5 and 5.7]{Gromov}).
In the three-dimensional case, a detailed proof based on $\mu$-bubbles was given by Hu, Liu, and Shi \cite{Hu-Liu-Shi}.
An alternative proof in the three-dimensional case was given by Hirsch, Kazaras, Khuri, and Zhang \cite{Hirsch-Kazaras-Khuri-Zhang}.
Using Dirac operator techniques, we generalize these results to all dimensions:

\begin{theorem}
\label{punctures}
Let $n>2$.
We consider the warped product metric $g_0 = {\rm d}\theta \otimes {\rm d}\theta + \sin^2 \theta g_{S^{n-1}}$ on~$S^{n-1} \times (0,\pi)$.
Let $\Omega$ be a non-compact, connected spin manifold of dimension $n$ without boundary.
Let $g$ be a $($possibly incomplete$)$ Riemannian metric on $\Omega$ with scalar curvature~${R_g \geq n(n-1)}$.
Suppose that $\Phi\colon (\Omega,g) \to \big(S^{n-1} \times (0,\pi),g_0\big)$ is a smooth map with the following properties:
\begin{itemize}\itemsep=0pt
\item $\Phi$ is proper,
\item $\Phi$ has non-zero degree,
\item $\Phi$ is $1$-Lipschitz.
\end{itemize}
Then $\Phi$ is a Riemannian isometry.
\end{theorem}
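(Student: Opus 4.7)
We prove Theorem~\ref{punctures} by exhausting $\Omega$ with compact manifolds-with-boundary and applying Theorem~\ref{spherical_band} to each piece, after a suitable modification of the target warping function. Set $f = \theta \circ \Phi \colon \Omega \to (0, \pi)$. Since $\Phi$ is $1$-Lipschitz, $|\nabla f|_g \leq 1$. By Sard's theorem, almost every value of $f$ is regular, so we may select sequences of regular values $\alpha_k \searrow 0$ and $\beta_k \nearrow \pi$. Let $M_k = f^{-1}([\alpha_k, \beta_k])$. Properness of $\Phi$ ensures that each $M_k$ is compact, and the regular value theorem makes $M_k$ a smooth compact manifold with boundary $\partial M_k = f^{-1}(\alpha_k) \sqcup f^{-1}(\beta_k)$, with $\Phi(\partial M_k) \subset S^{n-1} \times \{\alpha_k, \beta_k\}$. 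The restriction $\Phi|_{M_k}$ is $1$-Lipschitz and has the same (nonzero) degree as~$\Phi$, since any regular value of $\Phi$ in $S^{n-1} \times (\alpha_k, \beta_k)$ has all of its preimages inside~$M_k$.

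In order to apply Theorem~\ref{spherical_band} to $\Phi|_{M_k}$, every hypothesis is automatic except the boundary mean curvature comparison. Using the target warping function $\rho = \sin\theta$ one would need $H_g \geq -(n-1)\cot\alpha_k$ on $f^{-1}(\alpha_k)$ and $H_g \geq (n-1)\cot\beta_k$ on $f^{-1}(\beta_k)$. Although both right-hand sides tend to $-\infty$ as $k \to \infty$, we cannot control $\min_{\partial M_k} H_g$ a priori, since~$g$ may be incomplete and the hypersurfaces may degenerate as $\alpha_k \to 0$, $\beta_k \to \pi$. We therefore replace $\sin\theta$ by a modified warping function $\tilde\rho_k \colon [\alpha_k, \beta_k] \to (0, \infty)$ satisfying: (i)~$\tilde\rho_k \leq \sin\theta$ pointwise, so that the new target metric $\tilde g_0^{(k)} = \mathrm{d}\theta \otimes \mathrm{d}\theta + \tilde\rho_k^2\, g_{S^{n-1}}$ satisfies $\tilde g_0^{(k)} \leq g_0$ and $\Phi$ remains $1$-Lipschitz with respect to it; (ii)~$\tilde\rho_k = \sin\theta$ on an inner subinterval $[\alpha_k + \varepsilon_k, \beta_k - \varepsilon_k]$ with $\varepsilon_k \to 0$; (iii)~$(\log \tilde\rho_k)'' < 0$; (iv)~$R_{\tilde g_0^{(k)}} \leq n(n-1)$, so that $R_g \geq R_{\tilde g_0^{(k)}} \circ \Phi$; and (v)~the endpoint quantities $-(n-1)\tilde\rho_k'(\alpha_k)/\tilde\rho_k(\alpha_k)$ and $(n-1)\tilde\rho_k'(\beta_k)/\tilde\rho_k(\beta_k)$ are at most $\min_{f^{-1}(\alpha_k)} H_g$ and $\min_{f^{-1}(\beta_k)} H_g$, respectively.

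The key technical obstacle is producing such a $\tilde\rho_k$ satisfying (i)--(v) simultaneously. This can be accomplished by interpolating smoothly between $\sin\theta$ on the interior and a short collar of width~$\varepsilon_k$ near each endpoint, inside which $\tilde\rho_k$ is tapered to a very small terminal value $\eta_k \ll \varepsilon_k$. A direct computation from~\eqref{scal_g_0} shows that the resulting scalar curvature in the collar becomes highly negative (the $(n-2)(1-\tilde\rho_k'(\theta)^2)/\tilde\rho_k(\theta)^2$ term dominates and is strongly negative once $\tilde\rho_k' > 1$ while $\tilde\rho_k$ is small), and the log-concavity condition can be preserved throughout by a careful choice of the interpolation profile. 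With this~$\tilde\rho_k$ in hand, Theorem~\ref{spherical_band} applied to $\Phi|_{M_k} \colon (M_k, g) \to (S^{n-1} \times [\alpha_k, \beta_k], \tilde g_0^{(k)})$ shows that this map is a Riemannian isometry; restricting to the inner region where $\tilde g_0^{(k)} = g_0$ gives $\Phi^* g_0 = g$ on $f^{-1}([\alpha_k + \varepsilon_k, \beta_k - \varepsilon_k])$. Letting $k \to \infty$, these regions exhaust~$\Omega$, so $\Phi^* g_0 = g$ globally and $\Phi$ is a Riemannian isometry, as claimed.
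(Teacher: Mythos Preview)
Your approach has a genuine gap: the modified warping function $\tilde\rho_k$ satisfying (i)--(v) that you describe cannot exist whenever a nontrivial modification is needed. To see this, suppose $\tilde\rho_k$ satisfies (i)--(iv) and, as your condition~(v) requires, has $(\log\tilde\rho_k)'(\alpha_k) \geq \cot\alpha_k$ and $(\log\tilde\rho_k)'(\beta_k) \leq \cot\beta_k$ (you in fact want these inequalities to be strict and large). Now apply Theorem~\ref{spherical_band} itself to the identity map
\[
\mathrm{id}\colon \big(S^{n-1}\times[\alpha_k,\beta_k],\, g_0\big) \longrightarrow \big(S^{n-1}\times[\alpha_k,\beta_k],\, \tilde g_0^{(k)}\big).
\]
Condition~(i) gives the $1$-Lipschitz hypothesis, (iii) gives strict log-concavity of $\tilde\rho_k$, (iv) gives $R_{g_0}=n(n-1)\geq R_{\tilde g_0^{(k)}}$, and the two endpoint inequalities above are exactly the mean-curvature comparison. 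Theorem~\ref{spherical_band} then forces $g_0 = \tilde g_0^{(k)}$, i.e.\ $\tilde\rho_k \equiv \sin\theta$. In other words, the rigidity of Theorem~\ref{spherical_band} itself rules out any ``loosening'' of the comparison metric that would make its boundary mean curvature strictly smaller than that of $g_0$. Your plan therefore collapses to applying Theorem~\ref{spherical_band} with the unmodified $\rho=\sin\theta$, which is precisely the step you correctly flagged as unjustified.

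The paper avoids this obstruction by \emph{not} invoking Theorem~\ref{spherical_band} as a black box. Instead it reopens the integrated Weitzenb\"ock estimate and \emph{decouples} the potential $\Psi$ in the modified connection from the warping function entering the bound on $\mathcal R^{E}$: the $\mathcal R^{E}$-term is still controlled by $1/\sin^2\Theta$ (since $\Phi$ is $1$-Lipschitz into $g_0$), while $\Psi$ is replaced by a shifted cotangent $\psi_{\delta,\varepsilon}(\Theta)=\cot(\Theta\mp(\delta-\varepsilon))$ near the ends. This shift makes $\psi_{\delta,\varepsilon}(\delta)=\cot\varepsilon$ as large as desired (absorbing any boundary mean curvature) at the cost of an $O(\delta)$ error on the middle third. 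A double limit ($r\to\infty$ to kill the auxiliary $S^1$, then $\delta\to 0$) produces a nontrivial $\nabla^{\cot\Theta}$-parallel spinor on all of~$\Omega$, and rigidity follows as in Section~\ref{proof_of_band_comparison}. The point is that $\psi_{\delta,\varepsilon}$ is not $(\log\rho)'$ for any admissible $\rho$; this decoupling is exactly what your black-box reduction to Theorem~\ref{spherical_band} forbids.
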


\begin{Remark}
Theorems~\ref{spherical_band} and \ref{punctures} do not hold for $n=2$.
To see this, we choose $\lambda > 1$ and consider the metrics $g = {\rm d}\theta \otimes {\rm d}\theta + \lambda \sin^2 (\theta) g_{S^1}$ and $g_0 = {\rm d}\theta \otimes {\rm d}\theta + \sin^2 (\theta) g_{S^1}$ on $S^1 \times (0,\pi)$. Then $R_g=R_{g_0}=2$, and the identity map from $\big(S^1 \times (0,\pi),g\big)$ to $\big(S^1 \times (0,\pi),g_0\big)$ is $1$-Lipschitz, but not an isometry.
\end{Remark}

Our argument relies on the spin geometric approach to scalar curvature rigidity as introduced in \cite{Llarull} and further developed in \cite{Cecchini-Zeidler}.
A new feature of the present work is the construction of non-zero harmonic spinor fields for which the right-hand side of the integral Schr\"odinger--Lichnerowicz--Weitzenb\"ock formula has a favorable sign, but which cannot be generated directly by index-theoretic arguments.
This construction uses limits of sequences of non-zero harmonic spinor fields whose existence follows from index theory, cf.\ Corollaries~\ref{limiting_spinor_even_dim} and~\ref{special_spinor_on_Omega}.

In contrast to \cite{Cecchini-Zeidler}, our index calculations take place exclusively on compact manifolds.
The corresponding ``holographic'' index theorem for compact manifolds with boundary is formulated and proved in Appendix~\ref{holographic}, which may be of independent interest.

After this paper was written, we learned of a preprint by Wang and Xie \cite{Wang-Xie} announcing similar results.

\section{Proof of Theorem~\ref{spherical_band}}\label{proof_of_band_comparison}

\subsection[Proof of Theorem~\ref{spherical_band} for n even]{Proof of Theorem~\ref{spherical_band} for $\boldsymbol{n}$ even}

We first prove Theorem~\ref{spherical_band} for even $n$, which is not treated in \cite{Cecchini-Zeidler}.
The necessary changes in the odd-dimensional case will be explained in the next section.

Fix an even integer $n>2$ and a warping function $\rho\colon [\theta_- , \theta_+] \to \R$ such that $(\log \rho)''<0$.
Let~$g_0$ denote the warped product metric in \eqref{g_0}.
Let $M$ be a compact, connected spin manifold of dimension $n$ with boundary $\partial M$.
Let $g$ be a Riemannian metric on $M$.
Suppose that~${\Phi\colon (M,g) \to \big(S^{n-1} \times [\theta_-,\theta_+],g_0\big)}$ is a smooth map satisfying the assumptions of Theorem~\ref{spherical_band}.

Let $\varphi\colon M \to S^{n-1}$ denote the projection of $\Phi$ to the first factor, and let $\Theta\colon M \to [\theta_-,\theta_+]$ denote the projection of $\Phi$ to the second factor. Since $\Phi = (\varphi,\Theta)$ is $1$-Lipschitz, we obtain
\begin{equation} \label{ineq_for_g}
 g \geq {\rm d}\Theta \otimes {\rm d}\Theta + \rho^2(\Theta) \varphi^* g_{S^{n-1}}.
\end{equation}

\begin{Lemma}
\label{gradient_of_Theta}
We have $|\nabla \Theta| \leq 1$, and the inequality is strict unless ${\rm d}\varphi(\nabla \Theta) = 0$.
\end{Lemma}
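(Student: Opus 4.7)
The plan is to exploit the pointwise inequality \eqref{ineq_for_g} with $X = \nabla\Theta$ as the test vector, which gives a self-referential bound on $|\nabla\Theta|$.

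First, I would fix a point $p \in M$ and evaluate both sides of \eqref{ineq_for_g} on the pair $(\nabla\Theta,\nabla\Theta)$. The left-hand side yields $g(\nabla\Theta,\nabla\Theta) = |\nabla\Theta|^2$. For the right-hand side, the key observation is that ${\rm d}\Theta(\nabla\Theta) = g(\nabla\Theta,\nabla\Theta) = |\nabla\Theta|^2$, so ${\rm d}\Theta\otimes{\rm d}\Theta$ contributes $|\nabla\Theta|^4$. The second term contributes $\rho^2(\Theta)\,|{\rm d}\varphi(\nabla\Theta)|_{g_{S^{n-1}}}^2$, which is manifestly non-negative since $\rho>0$.

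Putting this together gives the inequality
\begin{equation*}
|\nabla\Theta|^2 \;\geq\; |\nabla\Theta|^4 + \rho^2(\Theta)\,|{\rm d}\varphi(\nabla\Theta)|_{g_{S^{n-1}}}^2,
\end{equation*}
equivalently
\begin{equation*}
|\nabla\Theta|^2\bigl(1-|\nabla\Theta|^2\bigr) \;\geq\; \rho^2(\Theta)\,|{\rm d}\varphi(\nabla\Theta)|_{g_{S^{n-1}}}^2 \;\geq\; 0.
\end{equation*}
The first inequality immediately yields $|\nabla\Theta| \leq 1$ (either $|\nabla\Theta|=0$, or one can divide by $|\nabla\Theta|^2>0$ to conclude $|\nabla\Theta|^2\leq 1$).

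For the strict inequality statement, I would observe that if $|\nabla\Theta|=1$ then the middle expression $|\nabla\Theta|^2(1-|\nabla\Theta|^2)$ vanishes, which forces the non-negative right-hand side $\rho^2(\Theta)\,|{\rm d}\varphi(\nabla\Theta)|^2$ to vanish as well; since $\rho>0$ this gives ${\rm d}\varphi(\nabla\Theta)=0$. Contrapositively, if ${\rm d}\varphi(\nabla\Theta)\neq 0$ then $|\nabla\Theta|<1$. There is no real obstacle here — the lemma is a direct algebraic consequence of the 1-Lipschitz condition once one feeds $\nabla\Theta$ back into itself; the only mild subtlety is noticing this self-referential trick rather than trying to bound $|\nabla\Theta|$ via an unrelated test vector.
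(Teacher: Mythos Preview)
Your proof is correct and follows exactly the paper's approach: evaluate \eqref{ineq_for_g} on the vector $\nabla\Theta$ to obtain $|\nabla\Theta|^2 \geq |\nabla\Theta|^4 + \rho^2(\Theta)\,|{\rm d}\varphi(\nabla\Theta)|_{g_{S^{n-1}}}^2$, and read off both conclusions. The paper simply states ``From this, the assertion follows easily,'' whereas you have spelled out the algebra explicitly.
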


\begin{proof}
Evaluating the inequality \eqref{ineq_for_g} at the vector $\nabla \Theta$ gives
\[|\nabla \Theta|^2 \geq |\nabla \Theta|^4 + \rho^2(\Theta) |{\rm d}\varphi(\nabla \Theta)|_{g_{S^{n-1}}}^2.\]
From this, the assertion follows easily.
\end{proof}

We write $\partial M = \partial_+ M \cup \partial_- M$, where
\[
\partial_+ M := \partial M \cap \Theta^{-1}(\{\theta_+\}), \qquad \partial_- M := \partial M \cap \Theta^{-1}(\{\theta_-\}).
\]

\begin{Lemma} \label{deg_of_restriction}
Let $\varphi|_{\partial_- M}\colon \partial_- M \to S^{n-1}$ denote the restriction of $\varphi$ to $\partial_- M$.
Then $\deg(\Phi) = \pm \deg(\varphi|_{\partial_- M})$.
\end{Lemma}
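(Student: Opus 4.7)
The plan is to treat $\Phi$ as a map of compact oriented $n$-manifolds with boundary $(M,\partial M)\to(N,\partial N)$, where $N:=S^{n-1}\times[\theta_-,\theta_+]$, and to exploit the naturality of the connecting homomorphism $\partial\colon H_n(\,\cdot\,,\partial(\,\cdot\,))\to H_{n-1}(\partial(\,\cdot\,))$ in the long exact sequence of the pair. The degree of $\Phi$ is encoded in $\Phi_\ast[M,\partial M]=\deg(\Phi)\cdot[N,\partial N]$ inside $H_n(N,\partial N;\mathbb{Z})\cong\mathbb{Z}$, and the standard identity $\partial[M,\partial M]=[\partial M]$, with the induced boundary orientation, reduces the claim to a comparison of fundamental classes on the boundary.

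Applying $\partial$ to the defining equation for $\deg(\Phi)$ and using its naturality under $\Phi$ gives
\[
\deg(\Phi)\cdot[\partial N]=(\Phi|_{\partial M})_\ast[\partial M].
\]
Because $\partial N=\bigl(S^{n-1}\times\{\theta_+\}\bigr)\sqcup\bigl(S^{n-1}\times\{\theta_-\}\bigr)$ and the two components inherit opposite orientations as the oriented boundary of $N$, the class $[\partial N]\in H_{n-1}(\partial N)\cong\mathbb{Z}\oplus\mathbb{Z}$ equals $\pm\bigl([S^{n-1}\times\{\theta_+\}]-[S^{n-1}\times\{\theta_-\}]\bigr)$. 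On the right-hand side, $\partial M=\partial_+M\sqcup\partial_-M$ and $\Phi$ sends $\partial_\pm M$ into $S^{n-1}\times\{\theta_\pm\}$, so, under the canonical identifications of the two summands with $H_{n-1}(S^{n-1})\cong\mathbb{Z}$ via the first-factor projection (which after restriction is exactly $\varphi$), the $\theta_-$-summand of the right-hand side is $\deg(\varphi|_{\partial_-M})\cdot[S^{n-1}]$. Comparing coefficients in that summand yields $\deg(\Phi)=\pm\deg(\varphi|_{\partial_-M})$.

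This is a standard exercise in the functoriality of degree, and I do not anticipate a substantive obstacle. The only care required concerns the induced boundary orientation, which is the source of the overall $\pm$; since the lemma only asserts equality up to sign, no explicit sign bookkeeping is necessary. (The argument also handles the boundary cases gracefully: if $\partial_-M=\emptyset$, the right-hand side vanishes in the $\theta_-$-summand and forces $\deg(\Phi)=0$, consistent with the convention $\deg(\varphi|_\emptyset)=0$.)
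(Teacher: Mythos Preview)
Your argument is correct. The use of naturality of the connecting homomorphism $\partial$ for the map of pairs $\Phi\colon (M,\partial M)\to(N,\partial N)$, together with the splitting $H_{n-1}(\partial N)\cong H_{n-1}(S^{n-1})\oplus H_{n-1}(S^{n-1})$ and the observation that $\Phi|_{\partial_- M}$ is $(\varphi|_{\partial_- M},\theta_-)$, cleanly yields the claimed identity up to sign.

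The paper takes a different, more differential-topological route: it replaces $\Theta$ by an auxiliary function $\hat\Theta\colon M\to[\theta_-,\theta_+]$ with $\hat\Theta^{-1}(\theta_\pm)=\partial_\pm M$ and $d\hat\Theta\neq 0$ along $\partial M$, sets $\hat\Phi=(\varphi,\hat\Theta)$, and notes that $\hat\Phi$ is homotopic to $\Phi$ rel $\partial M$; for $\hat\Phi$ one can read off $\deg(\hat\Phi)=\pm\deg(\varphi|_{\partial_- M})$ directly by counting preimages of a regular value close to the $\theta_-$ boundary. Your homological argument avoids the construction of $\hat\Theta$ and the regular-value bookkeeping, and makes transparent why only the sign is ambiguous (it comes from the boundary orientation). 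The paper's approach, on the other hand, stays entirely within smooth degree theory and avoids invoking the long exact sequence, which some readers may find more concrete. Both are short and self-contained; your version is arguably more conceptual.
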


\begin{proof}
We can find a smooth function $\hat{\Theta}\colon M \to [\theta_-,\theta_+]$ such that $\hat{\Theta}^{-1}(\{\theta_+\}) = \partial_+ M$, $\hat{\Theta}^{-1}(\{\theta_-\}) = \partial_- M$, and ${\rm d}\hat{\Theta} \neq 0$ at each point on $\partial_+ M \cup \partial_- M$.
Let us define a map $\hat{\Phi}\colon M \to S^{n-1} \times [\theta_-,\theta_+]$ by $\hat{\Phi} = \big(\varphi,\hat{\Theta}\big)$. Clearly, $\deg\big(\hat{\Phi}\big) = \pm \deg(\varphi|_{\partial_- M})$.
Since $\hat{\Phi}$ is homotopic to $\Phi$ relative to $\partial M$, the assertion follows.
\end{proof}

For even $n$, the boundary $\partial M$ is odd-dimensional which is inconvenient for the index calculations.
As in \cite{Llarull}, we remedy the situation by considering products with circles of large radius and sending the radius to infinity.
Let $r$ be a positive real number. We consider the product~${\tilde{M} = M \times S^1}$ equipped with the product metric $\tilde{g} = g + r^2 g_{S^1}$. We write $\partial \tilde{M} = \partial_+ \tilde{M} \cup \partial_- \tilde{M}$, where
\[\partial_+ \tilde{M} := \partial_+ M \times S^1, \qquad \partial_- \tilde{M} := \partial_- M \times S^1.\]

\begin{Lemma} \label{map_to_Sn}
There exists a smooth map
\begin{equation*}
 h\colon\ S^{n-1} \times S^1 \to S^n
\end{equation*}
of degree $\pm 1$ with the property that $h^* g_{S^n} \leq g_{S^{n-1}} + 4 g_{S^1}$.
\end{Lemma}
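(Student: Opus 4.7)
The plan is to patch a degree-one ``suspension'' map on one half of $S^1$ with an $\omega$-independent ``return'' map on the other half, and then smooth the two junctures. Parametrize $S^n$ by polar coordinates centered at the north pole, so that every point of $S^n$ has the form $(\sin\psi\cdot\omega',\cos\psi)\in\mathbb{R}^n\times\mathbb{R}$ with $\omega'\in S^{n-1}$ and $\psi\in[0,\pi]$, and the round metric reads $g_{S^n}=d\psi^2+\sin^2\psi\cdot g_{S^{n-1}}$. Parametrize $S^1=\mathbb{R}/(2\pi\mathbb{Z})$ with $g_{S^1}=dt^2$, and fix any unit vector $u\in S^{n-1}$.

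Define a continuous map $\tilde h\colon S^{n-1}\times S^1\to S^n$ by
\[
\tilde h(\omega,t) = \begin{cases} (\sin t\cdot\omega,\,\cos t), & t\in[0,\pi],\\ (\sin(2\pi-t)\cdot u,\,\cos(2\pi-t)), & t\in[\pi,2\pi]. \end{cases}
\]
The two formulas agree at the gluing points: at $t=\pi$ both equal the south pole, and at $t=0\equiv 2\pi$ both equal the north pole. The first branch is the standard parametrization of $S^n$, so it contributes $\operatorname{vol}(S^n)$ to $\int\tilde h^{\ast}\omega_{S^n}$; the second branch has one-dimensional image and contributes nothing. Hence $\deg\tilde h=\pm 1$. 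A direct computation gives $\tilde h^{\ast}g_{S^n}=dt^2+\sin^2 t\cdot g_{S^{n-1}}\le g_{S^{n-1}}+g_{S^1}$ on the first branch, and $\tilde h^{\ast}g_{S^n}=dt^2\le g_{S^1}$ on the second.

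The only defect of $\tilde h$ is that $\partial_t\tilde h$ has a jump across $t=\pi$ and across $t=0\equiv 2\pi$: on one side it depends on $\omega$, on the other it does not. To smooth this, replace $\tilde h$ inside small intervals $|t-\pi|<\delta$ and $|t|<\delta$ by a convex combination of the two formulas in the ambient $\mathbb{R}^{n+1}$, followed by radial projection back to $S^n$, with weights given by a smooth cutoff $\chi$. The crucial point is that both formulas land in an $O(\delta)$-neighborhood of the corresponding pole when $t$ is in a transition window, and their difference vanishes to first order at the pole, so the potentially dangerous term $\chi'(t)\bigl(\text{branch }1-\text{branch }2\bigr)$ in $\partial_t h$ is of size $O(1)$ independent of $\delta$.

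Choosing the cutoff carefully, one obtains a smooth map $h$ of degree $\pm 1$ satisfying $h^{\ast}g_{S^n}\le g_{S^{n-1}}+4\,g_{S^1}$: outside the transition windows the tighter bound $g_{S^{n-1}}+g_{S^1}$ already holds, while inside them the factor $4$ provides the slack needed to absorb the derivative of the cutoff and the radial normalization. The principal obstacle is verifying the pointwise bound inside the transition intervals; this relies on the first-order vanishing of $\sin t$ at the poles, which forces both branches and all the relevant derivatives to differ only by $O(\delta)$ in the transition window, so that the smoothed map stays within the allowed metric bound.
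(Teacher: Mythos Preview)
Your two–branch construction (suspension on one half of $S^1$, an $\omega$-independent meridian on the other) is exactly the idea the paper uses. The difference lies entirely in how smoothness at the two junctures is achieved, and here the paper's device is both simpler and gives the constant $4$ for free, whereas your smoothing step is left as an assertion.

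The paper does not interpolate between the two formulas at all. Instead it precomposes with a smooth reparametrization $\beta\colon[-\pi,\pi]\to[-\pi,\pi]$ that is \emph{constant} on neighborhoods of $-\pi$, $0$, and $\pi$ (equal to $-\pi$, $0$, $\pi$ respectively) and satisfies $|\beta'|\le 2$. One then sets $h(x,t)=(\sin\beta(t)\,x,\cos\beta(t))$ on $t\in[-\pi,0]$ and $h(x,t)=(\sin\beta(t)\,a,\cos\beta(t))$ on $t\in[0,\pi]$. Because $\beta$ is locally constant near the junctures, the map is locally constant there and hence trivially smooth; no convex combination or radial projection is needed. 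Moreover the pullback metric is \emph{diagonal} in the $(x,t)$ splitting, namely $h^*g_{S^n}=\sin^2\beta(t)\,g_{S^{n-1}}+\beta'(t)^2\,g_{S^1}$ on the first branch and $\beta'(t)^2\,g_{S^1}$ on the second, so the bound $h^*g_{S^n}\le g_{S^{n-1}}+4\,g_{S^1}$ is immediate from $|\beta'|\le 2$.

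In your approach the smoothing term $\chi'(t)\bigl(\text{branch }2-\text{branch }1\bigr)$ contributes to $\partial_t h$ a vector of norm roughly $2|\chi'(t)|\,|\sin t|$; combined with the convex combination of the branch derivatives (which already has norm close to $1$) this pushes $|\partial_t h|$ towards values near or above $2$, not comfortably below it. In addition, because your interpolated map depends on $\omega$ through branch~1 and on $t$ through $\chi$, the pullback metric acquires off-diagonal $dt\otimes g_{S^{n-1}}$ terms, so the inequality $h^*g_{S^n}\le g_{S^{n-1}}+4\,g_{S^1}$ is not just a matter of bounding $|\partial_t h|$ and $|\partial_\omega h|$ separately. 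Your final paragraph asserts that ``choosing the cutoff carefully'' handles all of this, but no cutoff is exhibited and no estimate is carried out; this is the gap. The reparametrization trick sidesteps both issues at once.
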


\begin{proof}
Fix a smooth $2$-Lipschitz function $\beta\colon [-\pi,\pi] \to [-\pi,\pi]$ such that $\beta(t) = -\pi$ for $t \in \bigl[-\pi,-\frac{7\pi}{8}\bigr]$, $\beta(t) = 0$ for $t \in \bigl[-\frac{\pi}{8},\frac{\pi}{8}\bigr]$, and $\beta(t) = \pi$ for $t \in \big[\frac{7\pi}{8},\pi\big]$. Moreover, let us fix a point $a \in S^{n-1}$. We consider the map
\[
 S^{n-1} \times [-\pi,\pi] \to S^n,\qquad (x,t) \mapsto \begin{cases} (\sin \beta(t) x,\cos \beta(t)) & \text{\rm for $t \in [-\pi,0]$}, \\
 (\sin \beta(t) a,\cos \beta(t)) & \text{\rm for $t \in [0,\pi]$.} \end{cases}
 \]
This gives a map $h\colon S^{n-1} \times S^1 \to S^n$ of degree $\pm 1$. Moreover, $h^* g_{S^n} = \sin^2 \beta(t) g_{S^{n-1}} + \beta'(t)^2 g_{S^1}$ for $t \in [-\pi,0]$ and $h^* g_{S^n} = \beta'(t)^2 g_{S^1}$ for $t \in [0,\pi]$.
\end{proof}

In the following, we assume that $h\colon S^{n-1} \times S^1 \to S^n$ is chosen as in Lemma~\ref{map_to_Sn}. We define a smooth map $\tilde{f}\colon \tilde{M} = M \times S^1\to S^n$, $
\tilde{f}(x,t) = h(\varphi(x),t)$ for $x \in M$ and $t \in S^1$.

Choose a spin structure on $M$ and let $S$ denote the spinor bundle over $M$. Furthermore, let $\tilde{S}$ denote the spinor bundle over $\tilde{M} = M \times S^1$, where $S^1$ is equipped with the trivial spin structure $S^1 \times {\rm Spin}(1) \to S^1 \times {\rm SO}(1)$.
Note that with this choice, $\tilde{S}$ is the pull-back of $S$, as a~Clifford-module bundle, under the projection from $\tilde{M} = M \times S^1$ to $M$.

Let $E_0$ denote the spinor bundle of the round sphere $S^n$.
The bundle $E_0$ is equipped with a preferred bundle metric and connection.
Since $n$ is even, we may decompose $E_0$ in the usual way as $E_0 = E_0^+ \oplus E_0^-$, where $E_0^+$ and $E_0^-$ are the $\pm 1$-eigenbundles of the complex volume form.

Next we need an index computation.

\begin{Proposition}[cf.\ Cecchini--Zeidler \cite{Cecchini-Zeidler}]
\label{index_even_dim}
Consider the indices of the following operators:
\begin{itemize}\itemsep=0pt
\item Let $\mathrm{ind}_1$ denote the index of the Dirac operator on $\tilde{S} \otimes \tilde{f}^* E_0^+$ with boundary conditions $u = -{\rm i} \nu \cdot u$ on $\partial_+ \tilde{M}$ and $u = {\rm i} \nu \cdot u$ on $\partial_- \tilde{M}$.
\item Let $\mathrm{ind}_2$ denote the index of the Dirac operator on $\tilde{S} \otimes \tilde{f}^* E_0^+$ with boundary conditions $u = {\rm i} \nu \cdot u$ on $\partial_+ \tilde{M}$ and $u = -{\rm i} \nu \cdot u$ on $\partial_- \tilde{M}$.
\item Let $\mathrm{ind}_3$ denote the index of the Dirac operator on $\tilde{S} \otimes \tilde{f}^* E_0^-$ with boundary conditions $u = -{\rm i} \nu \cdot u$ on $\partial_+ \tilde{M}$ and $u = {\rm i} \nu \cdot u$ on $\partial_- \tilde{M}$.
\item Let $\mathrm{ind}_4$ denote the index of the Dirac operator on $\tilde{S} \otimes \tilde{f}^* E_0^-$ with boundary conditions $u = {\rm i} \nu \cdot u$ on $\partial_+ \tilde{M}$ and $u = -{\rm i} \nu \cdot u$ on $\partial_- \tilde{M}$.
\end{itemize}
Then $\mathrm{ind}_1+\mathrm{ind}_2 = 0$, $\mathrm{ind}_3+\mathrm{ind}_4 = 0$, and $\max \{\mathrm{ind}_1,\mathrm{ind}_2, \mathrm{ind}_3, \mathrm{ind}_4\} > 0$.
\end{Proposition}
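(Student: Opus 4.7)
The plan is to split the result into two parts: the cancellation identities, and the positivity of the maximum. The cancellations $\mathrm{ind}_1+\mathrm{ind}_2=0$ and $\mathrm{ind}_3+\mathrm{ind}_4=0$ follow from adjointness. The twisted Dirac operator on $\tilde S\otimes\tilde f^\ast E_0^\pm$ is formally self-adjoint; Clifford multiplication by ${\rm i}\nu$ is a self-adjoint involution on $\tilde S$ which anticommutes with Clifford multiplication by tangential vectors; and Green's identity then shows that the local boundary conditions $u={\rm i}\nu\cdot u$ and $u=-{\rm i}\nu\cdot u$ are mutually adjoint in $L^2$. Swapping these conditions on both components of $\partial\tilde M$ therefore replaces the boundary value problem by its formal adjoint, whose index is the negative of the original.

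For the positivity, I would view $\tilde f^\ast E_0=\tilde f^\ast E_0^+\oplus\tilde f^\ast E_0^-$ as a $\mathbb{Z}/2$-graded twisting bundle. The Dirac operator on $\tilde S\otimes\tilde f^\ast E_0$ with boundary conditions chosen compatibly with this grading is a graded operator whose index equals $\mathrm{ind}_1-\mathrm{ind}_3$ (for one of the two possible sign choices). Since $\tilde M$ has odd dimension $n+1$ and its boundary $\partial\tilde M=\partial M\times S^1$ is a closed even-dimensional spin manifold, the holographic index theorem from Appendix~\ref{holographic} expresses this graded index as the index of an induced Dirac-type operator on $\partial\tilde M$, twisted by the virtual bundle $\tilde f^\ast\bigl([E_0^+]-[E_0^-]\bigr)\big|_{\partial\tilde M}$. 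Atiyah--Singer on $\partial\tilde M$, combined with Llarull's evaluation of $\int_{S^n}\hat A\cdot\mathrm{ch}(E_0^+-E_0^-)$ as a nonzero integer, then identifies the result with a nonzero multiple of $\deg(\tilde f|_{\partial_+\tilde M})\pm\deg(\tilde f|_{\partial_-\tilde M})$. Since $\tilde f=h\circ(\varphi\times\mathrm{id}_{S^1})$ with $\deg(h)=\pm 1$ by Lemma~\ref{map_to_Sn}, this combination reduces via Lemma~\ref{deg_of_restriction} to a nonzero multiple of $\deg(\Phi)\ne 0$. Hence $\mathrm{ind}_1-\mathrm{ind}_3\ne 0$, which combined with the cancellations of the first step forces the maximum to be strictly positive.

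The main obstacle will be the second paragraph: one must set up the graded boundary value problem on $\tilde M$ carefully so that the holographic index theorem applies to precisely the combination $\mathrm{ind}_1-\mathrm{ind}_3$, and then verify that the topological answer on $\partial\tilde M$ reduces to an honest multiple of $\deg(\Phi)$ rather than a combination of boundary degrees that could cancel for orientation reasons. The adjointness argument in the first paragraph and the final degree computation on $\partial\tilde M$ are then routine given the inputs from Appendix~\ref{holographic} and Lemma~\ref{deg_of_restriction}.
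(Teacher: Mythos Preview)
Your approach is essentially the same as the paper's. The paper frames the second part as a contradiction (assume all four indices vanish, then derive $\deg(\tilde f|_{\partial_-\tilde M})=0$), but unwinding this it is exactly your direct computation: Corollary~\ref{cor:Freed} gives $\mathrm{ind}_1$ and $\mathrm{ind}_3$ as Atiyah--Singer indices on $\partial_-\tilde M$ twisted by $\tilde f^*E_0^+$ and $\tilde f^*E_0^-$ respectively, and subtracting the two Atiyah--Singer formulas eliminates the $\hat{\mathsf A}$-term (since $\dim E_0^+=\dim E_0^-$) to yield $\mathrm{ind}_1-\mathrm{ind}_3=\deg(\tilde f|_{\partial_-\tilde M})\cdot\langle\mathsf{ch}(E_0^+)-\mathsf{ch}(E_0^-),[S^n]\rangle=\pm 2\deg(\tilde f|_{\partial_-\tilde M})$.

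The concern you flagged about possible cancellation between $\partial_+\tilde M$ and $\partial_-\tilde M$ is precisely what Corollary~\ref{cor:Freed} handles: for the boundary condition defining $\mathrm{ind}_1$ one has $\varepsilon_j=-1$ on $\partial_+\tilde M$ and $\varepsilon_j=+1$ on $\partial_-\tilde M$, so only $\partial_-\tilde M$ appears in the sum. There is therefore no combination $\deg(\tilde f|_{\partial_+\tilde M})\pm\deg(\tilde f|_{\partial_-\tilde M})$ to worry about; the answer is a single boundary degree, and Lemmas~\ref{deg_of_restriction} and~\ref{map_to_Sn} show it is nonzero.
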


\begin{proof}
Since the boundary conditions are adjoint to each other, we obtain $\mathrm{ind}_1+\mathrm{ind}_2 = 0$ and~${\mathrm{ind}_3+\mathrm{ind}_4 = 0}$.

It remains to show that $\max \{\mathrm{ind}_1,\mathrm{ind}_2, \mathrm{ind}_3, \mathrm{ind}_4\} > 0$.
Suppose that this is false.
Then $\mathrm{ind}_1=\mathrm{ind}_2=\mathrm{ind}_3=\mathrm{ind}_4=0$.
We will apply the holographic index theorem in Appendix~\ref{holographic} and the Atiyah--Singer index theorem to show that the assumption $\mathrm{ind}_1=\mathrm{ind}_3=0$ already leads to a contradiction.

The restriction \smash{$\tilde{S}|_{\partial_- \tilde{M}}$} can be identified with the spinor bundle on $\partial_- \tilde{M}$.
We may write \smash{$\tilde{S}|_{\partial_- \tilde{M}} = S^+ \oplus S^-$}, where $S^+$ and $S^-$ denote the eigenbundles of the volume form on $\partial_- \tilde{M}$.
Equivalently, $S^+$ and $S^-$ can be characterized as the eigenbundles of $i\nu$.
This gives the splitting
\[
 \big(\tilde{S} \otimes \tilde{f}^* E_0^+\big)|_{\partial_- \tilde{M}} = \big(S^+ \otimes \big(\tilde{f}|_{\partial_- \tilde{M}}\big)^* E_0^+\big) \oplus \big(S^- \otimes \big(\tilde{f}|_{\partial_- \tilde{M}}\big)^* E_0^+\big).
\]
Similarly,
\[
 \big(\tilde{S} \otimes \tilde{f}^* E_0^-\big)|_{\partial_- \tilde{M}} = \big(S^+ \otimes \big(\tilde{f}|_{\partial_- \tilde{M}}\big)^* E_0^-\big) \oplus \big(S^- \otimes \big(\tilde{f}|_{\partial_- \tilde{M}}\big)^* E_0^-\big).
 \]
Since $\mathrm{ind}_1 =0$, Corollary~\ref{cor:Freed} tells us that the boundary Dirac operator which maps sections of \smash{$S^+ \otimes \big(\tilde{f}|_{\partial_- \tilde{M}}\big)^* E_0^+$} to sections of \smash{$S^- \otimes \big(\tilde{f}|_{\partial_- \tilde{M}}\big)^* E_0^+$} has index $0$.
Similarly, since $\mathrm{ind}_3=0$, the boundary Dirac operator which maps sections of \smash{$S^+ \otimes \big(\tilde{f}|_{\partial_- \tilde{M}}\big)^* E_0^-$} to sections of $\smash{S^- \otimes} \allowbreak\smash{\big(\tilde{f}|_{\partial_- \tilde{M}}\big)^* E_0^-}$ has index $0$.

To obtain a contradiction, we compute the index of the boundary Dirac operators using the Atiyah--Singer index theorem.
Denote the total $\hat{\mathsf{A}}$-class of $\partial_- \tilde{M}$ by $\hat{\mathsf{A}}\big(\partial_- \tilde{M}\big)$. The Chern character of the bundle \smash{$\big(\tilde{f}|_{\partial_- \tilde{M}}\big)^* E_0^+$} is given by the pull-back of $\mathsf{ch}\big(E_0^+\big)$ under \smash{$\tilde{f}|_{\partial_- \tilde{M}}$}. In particular, the Chern character of the bundle \smash{$\big(\tilde{f}|_{\partial_- \tilde{M}}\big)^* E_0^+$} only contains terms in the $0$-th and $n$-th cohomology groups. Since the boundary Dirac operator which maps sections of \smash{$S^+ \otimes \big(\tilde{f}|_{\partial_- \tilde{M}}\big)^* E_0^+$} to sections of \smash{$S^- \otimes \big(\tilde{f}|_{\partial_- \tilde{M}}\big)^* E_0^+$} has index $0$, the Atiyah--Singer index theorem gives
\begin{align}
0
&= \big\langle \hat{\mathsf{A}}\big(\partial_- \tilde{M}\big) \cup \mathsf{ch}\big(\big( \tilde{f}|_{\partial_- \tilde{M}}\big)^* E_0^+\big),\big[\partial_- \tilde{M}\big] \big\rangle\nonumber \\
&= \dim E_0^+ \cdot \big\langle \hat{\mathsf{A}}\big(\partial_- \tilde{M}\big),\big[\partial_- \tilde{M}\big] \big\rangle + \big\langle \mathsf{ch}\big(\big( \tilde{f}|_{\partial_- \tilde{M}}\big)^* E_0^+\big), \big[\partial_- \tilde{M}\big] \big\rangle\nonumber \\
&= \dim E_0^+ \cdot \big\langle \hat{\mathsf{A}}\big(\partial_- \tilde{M}\big),\big[\partial_- \tilde{M}\big] \big\rangle + \deg\big(\tilde{f}|_{\partial_- \tilde{M}}\big) \cdot \big\langle \mathsf{ch}\big(E_0^+\big), [S^n] \big\rangle. \label{index.E_0^+}
\end{align}
Working with $E_0^-$ instead of $E_0^+$, we similarly obtain
\begin{align}
0
&= \big\langle \hat{\mathsf{A}}\big(\partial_- \tilde{M}\big) \cup \mathsf{ch}\big(\big( \tilde{f}|_{\partial_- \tilde{M}}\big)^* E_0^-\big),\big[\partial_- \tilde{M}\big] \big\rangle\nonumber \\
&= \dim E_0^- \cdot \big\langle \hat{\mathsf{A}}\big(\partial_- \tilde{M}\big),\big[\partial_- \tilde{M}\big] \big\rangle + \big\langle \mathsf{ch}\big(\big( \tilde{f}|_{\partial_- \tilde{M}}\big)^* E_0^-\big), \big[\partial_- \tilde{M}\big] \big\rangle \nonumber\\
&= \dim E_0^- \cdot \big\langle \hat{\mathsf{A}}\big(\partial_- \tilde{M}\big),\big[\partial_- \tilde{M}\big] \big\rangle + \deg\big(\tilde{f}|_{\partial_- \tilde{M}}\big) \cdot \big\langle \mathsf{ch}(E_0^-), [S^n] \big\rangle.\label{index.E_0^-}
\end{align}
In the next step, we subtract \eqref{index.E_0^-} from \eqref{index.E_0^+}.
Using the fact that $\dim E_0^+ = \dim E_0^-$, we obtain
\[
 0 = \deg\big(\tilde{f}|_{\partial_- \tilde{M}}\big) \cdot \big\langle \mathsf{ch}\big(E_0^+\big) - \mathsf{ch}(E_0^-), [S^n] \big\rangle.
 \]
It follows from \cite[Proposition~11.24, Chapter~III]{Lawson-Michelsohn} that $\big\langle \mathsf{ch}\big(E_0^+\big) - \mathsf{ch}(E_0^-), [S^n] \big\rangle = \pm \chi(S^n) =\pm 2 \neq 0$ since $n$ is even.
Thus $\deg\big(\tilde{f}|_{\partial_- \tilde{M}}\big)=0$.

By assumption, the map $\Phi\colon M \to S^{n-1} \times [\theta_-,\theta_+]$ has non-zero degree.
Hence, it follows from Lemma~\ref{deg_of_restriction} that the map $\varphi|_{\partial_- M}\colon \partial_- M \to S^{n-1}$ has non-zero degree.
Consequently, the map $\varphi|_{\partial_- M} \times \id\colon \partial_- M \times S^1 \to S^{n-1} \times S^1$ has non-zero degree. By Lemma~\ref{map_to_Sn}, the map~${h\colon S^{n-1} \times S^1 \to S^n}$ has non-zero degree.
Since \smash{$\tilde{f}|_{\partial_- \tilde{M}} = h \circ (\varphi|_{\partial_- M} \times \id)$}, we conclude that the map \smash{$\tilde{f}|_{\partial_- \tilde{M}}\colon \partial_- \tilde{M} \to S^n$} has non-zero degree.
This is a contradiction.
\end{proof}

By Proposition~\ref{index_even_dim}, we know that $\max \{\mathrm{ind}_1,\mathrm{ind}_2, \mathrm{ind}_3, \mathrm{ind}_4\} > 0$. After switching the bundles~$E_0^+$ and $E_0^-$ if necessary, we may assume that $\max \{\mathrm{ind}_1,\mathrm{ind}_2\} > 0$.
In the remainder of this section, we focus on the case $\mathrm{ind}_1 > 0$.
(The case $\mathrm{ind}_2 > 0$ can be treated analogously.)

Let $\tilde{E}$ denote the pull-back of $E_0^+$ under the map $\tilde{f}$. The bundle metric on $E_0^+$ gives us a~bundle metric on $\tilde{E}$. Moreover, the connection on $E_0^+$ induces a connection on $\tilde{E}$. We denote by \smash{$\nabla^{\tilde{S} \otimes \tilde{E}}$} the tensor product connection on $\tilde{S} \otimes \tilde{E}$. We denote by \smash{$\mathcal{D}^{\tilde{S} \otimes \tilde{E}}$} the Dirac operator acting on sections of $\tilde{S} \otimes \tilde{E}$,
\[
 \mathcal{D}^{\tilde{S} \otimes \tilde{E}} u = \sum_{k=1}^{n+1} e_k \cdot \nabla_{e_k}^{\tilde{S} \otimes \tilde{E}} u,
\]
where $\{e_1, \ldots, e_{n+1}\}$ is a local orthonormal frame on $\tilde{M}$.
Finally, we define the boundary Dirac operator by
\[
 \mathcal{D}^{\partial \tilde{M}} u = \sum_{k=1}^n \nu \cdot e_k \cdot \nabla_{e_k}^{\tilde{S} \otimes \tilde{E}} u + \frac{1}{2} H u,
\]
where $\{e_1,\hdots,e_n\}$ is a local orthonormal frame on $\partial \tilde{M}$.
The boundary Dirac operator is self-adjoint and anti-commutes with Clifford multiplication by $\nu$.

Recall the Weitzenb\"ock formula (see \cite[Theorem~8.17, Chapter~II]{Lawson-Michelsohn}),
\[
 \big(\mathcal{D}^{\tilde{S} \otimes \tilde{E}}\big)^2 u
 =
 \big(\nabla^{\tilde{S} \otimes \tilde{E}}\big)^*\nabla^{\tilde{S} \otimes \tilde{E}} u + \frac{1}{4} R u + \mathcal{R}^{\tilde{E}} u,
\]
where $\mathcal{R}^{\tilde{E}}$ is a section of the endomorphism bundle of $\tilde{S} \otimes \tilde{E}$ which depends on the curvature of the bundle $\tilde{E}$.

We define a vector field $T$ on $\tilde{M}$ by $T = \frac{1}{r} \frac{\partial}{\partial t}$, where $t \mapsto (\cos t,\sin t)$ is the canonical local coordinate on $S^1$. Note that $T$ is parallel and has unit length with respect to the metric $\tilde{g}$. In the following, $\Psi$ will denote a smooth function on $M$ which will be specified later. We may extend $\Psi$ to a smooth function on $\tilde{M}$ satisfying $T(\Psi) = 0$. If $u$ is a section of the bundle $\tilde{S} \otimes \tilde{E}$ and $X$ is a vector field on $\tilde{M}$, we define
\begin{equation} \label{def:tildeP}
\tilde{P}_X u = \nabla_{X - \langle X,T \rangle T}^{\tilde{S} \otimes \tilde{E}} u + \frac{\rm i}{2} \Psi \cdot (X - \langle X,T \rangle T) \cdot u.
\end{equation}
Our argument is based on the following integral formula which links several geometric quantities on $\tilde{M}$ and $\partial \tilde{M}$.

\begin{Proposition}
\label{integral_formula_even_dim}
Let $u \in C^\infty\big(\tilde{M},\tilde{S} \otimes \tilde{E}\big)$. Then
\begin{align*}
& - \int_{\tilde{M}} \left|\mathcal{D}^{\tilde{S} \otimes \tilde{E}} u - \frac{{\rm i}n}{2} \Psi u\right|^2 + \int_{\tilde{M}} \big|\tilde{P} u\big|^2 + \int_{\tilde{M}} \big|\nabla_T^{\tilde{S} \otimes \tilde{E}} u\big|^2 + \frac{1}{4} \int_{\tilde{M}} R |u|^2 \\
&+ \int_{\tilde{M}} \big\langle \mathcal{R}^{\tilde{E}} u,u \big\rangle + \frac{n(n-1)}{4} \int_{\tilde{M}} \Psi^2 |u|^2 - \frac{{\rm i}(n-1)}{2} \int_{\tilde{M}} \langle (\nabla \Psi) \cdot u,u \rangle \\
&\qquad= \frac{1}{2} \int_{\partial_+ \tilde{M}} \big\langle \mathcal{D}^{\partial \tilde{M}} u,u + {\rm i} \nu \cdot u \big\rangle + \frac{1}{2} \int_{\partial_+ \tilde{M}} \big\langle u + {\rm i} \nu \cdot u,\mathcal{D}^{\partial \tilde{M}} u \big\rangle \\
&\phantom{\qquad=}{} - \frac{1}{2} \int_{\partial_+ \tilde{M}} (H-(n-1)\Psi) |u|^2 - \frac{n-1}{2} \int_{\partial_+ \tilde{M}} \Psi \langle u + {\rm i} \nu \cdot u,u \rangle \\
&\phantom{\qquad=}{} + \frac{1}{2} \int_{\partial_- \tilde{M}} \big\langle \mathcal{D}^{\partial \tilde{M}} u,u - {\rm i} \nu \cdot u \big\rangle + \frac{1}{2} \int_{\partial_- \tilde{M}} \big\langle u - {\rm i} \nu \cdot u,\mathcal{D}^{\partial \tilde{M}} u \big\rangle \\
&\phantom{\qquad=}{} - \frac{1}{2} \int_{\partial_- \tilde{M}} (H+(n-1)\Psi) |u|^2 + \frac{n-1}{2} \int_{\partial_- \tilde{M}} \Psi \langle u - {\rm i} \nu \cdot u,u \rangle.
\end{align*}
\end{Proposition}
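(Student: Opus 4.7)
The identity is an integrated Lichnerowicz-type formula adapted to the Callias-like operator $\mathcal{D}^{\tilde{S}\otimes\tilde{E}} - \tfrac{{\rm i}n}{2}\Psi$ and to the projected spinorial connection $\tilde{P}$, taking advantage of the parallel unit vector field $T$ in the $S^1$-direction being excluded from the Clifford-deformation. The plan is to establish a pointwise algebraic identity, integrate three summands on the right-hand side separately (via the standard Lichnerowicz formula, a Green-identity manipulation, and a total $T$-derivative argument), and finally reorganize the resulting boundary integrals into the stated chiral form. For brevity I write $\mathcal{D} = \mathcal{D}^{\tilde{S}\otimes\tilde{E}}$ and $\nabla = \nabla^{\tilde{S}\otimes\tilde{E}}$.

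\textbf{Pointwise identity.} Pick a local orthonormal frame $\{e_1,\ldots,e_n,T\}$ on $\tilde{M}$ with $e_k\perp T$, so that $\tilde{P}_{e_k}u = \nabla_{e_k}u + \tfrac{{\rm i}}{2}\Psi\, e_k\cdot u$ and $\tilde{P}_T u = 0$. A direct expansion of $|\tilde{P}u|^2 + |\nabla_T u|^2 - |\nabla u|^2$ and $|\mathcal{D}u - \tfrac{{\rm i}n}{2}\Psi u|^2 - |\mathcal{D}u|^2$, together with the skew-Hermiticity identity $\sum_{k=1}^n\langle\nabla_{e_k}u, e_k\cdot u\rangle = -\langle \mathcal{D}u - T\cdot\nabla_T u, u\rangle$, yields
\begin{equation*}
 |\tilde{P}u|^2 + |\nabla_T u|^2 - \bigl|\mathcal{D}u - \tfrac{{\rm i}n}{2}\Psi u\bigr|^2 + \tfrac{n(n-1)}{4}\Psi^2|u|^2 = |\nabla u|^2 - |\mathcal{D}u|^2 + (n-1)\Psi\,\mathrm{Im}\langle \mathcal{D}u, u\rangle + \Psi\,\mathrm{Im}\langle T\cdot\nabla_T u, u\rangle.
\end{equation*}

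\textbf{Integrating the right-hand side.} The standard integrated Lichnerowicz formula with boundary, after using $\nu\cdot\mathcal{D}u = -\nabla_\nu u + \mathcal{D}^{\partial\tilde{M}}u - \tfrac{H}{2}u$ so that normal-derivative contributions cancel, gives
\begin{equation*}
 \int_{\tilde{M}}\bigl(|\nabla u|^2 - |\mathcal{D}u|^2\bigr) = -\int_{\tilde{M}}\bigl(\tfrac{R}{4}|u|^2 + \langle \mathcal{R}^{\tilde{E}}u, u\rangle\bigr) + \int_{\partial\tilde{M}}\bigl(\mathrm{Re}\langle\mathcal{D}^{\partial\tilde{M}}u, u\rangle - \tfrac{H}{2}|u|^2\bigr).
\end{equation*}
The Green identity for $\mathcal{D}$, together with $\mathcal{D}(\Psi u) = \Psi\mathcal{D}u + (\nabla\Psi)\cdot u$, produces
\begin{equation*}
 (n-1)\int_{\tilde{M}}\Psi\,\mathrm{Im}\langle \mathcal{D}u, u\rangle = \tfrac{{\rm i}(n-1)}{2}\int_{\tilde{M}}\langle(\nabla\Psi)\cdot u, u\rangle - \tfrac{{\rm i}(n-1)}{2}\int_{\partial\tilde{M}}\Psi\langle\nu\cdot u, u\rangle,
\end{equation*}
so the bulk summand $-\tfrac{{\rm i}(n-1)}{2}\int\langle(\nabla\Psi)\cdot u, u\rangle$ on the left-hand side cancels exactly. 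The third piece vanishes because, since $T$ is parallel and $T(\Psi) = 0$, the integrand is the total $T$-derivative $\tfrac{1}{2{\rm i}}T\bigl(\Psi\langle T\cdot u, u\rangle\bigr)$; its integral over the closed $S^1$-fibers is zero, and $T$ is tangent to $\partial\tilde{M}$ so no extra boundary term arises.

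\textbf{Boundary reorganization and main obstacle.} Collecting contributions, the left-hand side equals $\int_{\partial\tilde{M}}\bigl(\mathrm{Re}\langle\mathcal{D}^{\partial\tilde{M}}u, u\rangle - \tfrac{H}{2}|u|^2 - \tfrac{{\rm i}(n-1)}{2}\Psi\langle\nu\cdot u, u\rangle\bigr)$, and it remains to match this with the claimed chiral form. On $\partial_\pm\tilde{M}$, one computes directly that $-\tfrac{1}{2}(H\mp(n-1)\Psi)|u|^2 \mp \tfrac{n-1}{2}\Psi\langle u\pm{\rm i}\nu\cdot u, u\rangle$ simplifies to $-\tfrac{H}{2}|u|^2 - \tfrac{{\rm i}(n-1)}{2}\Psi\langle\nu\cdot u, u\rangle$, while $\tfrac{1}{2}\langle\mathcal{D}^{\partial\tilde{M}}u, u\pm{\rm i}\nu\cdot u\rangle + \tfrac{1}{2}\langle u\pm{\rm i}\nu\cdot u, \mathcal{D}^{\partial\tilde{M}}u\rangle = \mathrm{Re}\langle\mathcal{D}^{\partial\tilde{M}}u, u\rangle \pm \mathrm{Re}\langle\mathcal{D}^{\partial\tilde{M}}u, {\rm i}\nu\cdot u\rangle$. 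The main obstacle is verifying that the apparently extra $\pm\int_{\partial_\pm}\mathrm{Re}\langle\mathcal{D}^{\partial\tilde{M}}u, {\rm i}\nu\cdot u\rangle$ vanishes on each boundary component separately. This is exactly where the product-with-$S^1$ construction pays off: each $\partial_\pm\tilde{M} = \partial_\pm M\times S^1$ is a closed manifold (as both $\partial_\pm M$ and $S^1$ are closed), so $\mathcal{D}^{\partial\tilde{M}}$ is self-adjoint there; combined with the anti-commutation $\mathcal{D}^{\partial\tilde{M}}(\nu\cdot v) = -\nu\cdot\mathcal{D}^{\partial\tilde{M}}v$, this forces $\int_{\partial_\pm}\langle\mathcal{D}^{\partial\tilde{M}}u, \nu\cdot u\rangle$ to be real, and hence $\int_{\partial_\pm}\mathrm{Re}\langle\mathcal{D}^{\partial\tilde{M}}u, {\rm i}\nu\cdot u\rangle = 0$ as needed.
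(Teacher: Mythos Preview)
Your proof is correct and follows essentially the same route as the paper's: integrated Weitzenb\"ock formula with boundary, pointwise expansion of $|\tilde{P}u|^2$, Green's identity for the $(\nabla\Psi)$-term, vanishing of the $T$-derivative contribution, and the self-adjointness/anti-commutation of $\mathcal{D}^{\partial\tilde{M}}$ to handle the chiral boundary rewriting. Your only difference is organizational---you package the bulk algebra into a single pointwise identity before integrating, whereas the paper builds up the integral identity in stages---but the ingredients and logic are identical. One minor expository remark: your claim that ``this is exactly where the product-with-$S^1$ construction pays off'' is a slight misattribution, since $\partial_\pm M$ is already closed without the $S^1$ factor (the $S^1$ is there for index-theoretic parity reasons, not for the self-adjointness of the boundary operator); this does not affect the validity of your argument.
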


\begin{proof}
Integrating the Weitzenb\"ock formula and using the divergence theorem gives
\begin{align*}
&- \int_{\tilde{M}} \big|\mathcal{D}^{\tilde{S} \otimes \tilde{E}} u\big|^2 + \int_{\tilde{M}} \big|\nabla^{\tilde{S} \otimes \tilde{E}} u\big|^2 + \frac{1}{4} \int_{\tilde{M}} R |u|^2 + \int_{\tilde{M}} \big\langle \mathcal{R}^{\tilde{E}} u,u \big\rangle \\
&\qquad= \int_{\partial \tilde{M}} \big\langle \nu \cdot \mathcal{D}^{\tilde{S} \otimes \tilde{E}} u,u \big\rangle + \int_{\partial \tilde{M}} \big\langle \nabla^{\tilde{S} \otimes \tilde{E}}_\nu u,u \big\rangle.
\end{align*}
Note that $\nu \cdot \mathcal{D}^{\tilde{S} \otimes \tilde{E}} u + \nabla^{\tilde{S} \otimes \tilde{E}}_\nu u = \mathcal{D}^{\partial \tilde{M}} u - \frac{1}{2} H u$.
This gives
\begin{align*}
&- \int_{\tilde{M}} \big|\mathcal{D}^{\tilde{S} \otimes \tilde{E}} u\big|^2 + \int_{\tilde{M}} \big|\nabla^{\tilde{S} \otimes \tilde{E}} u\big|^2 + \frac{1}{4} \int_{\tilde{M}} R |u|^2 + \int_{\tilde{M}} \big\langle \mathcal{R}^{\tilde{E}} u,u \big\rangle \\
&\qquad= \int_{\partial \tilde{M}} \big\langle \mathcal{D}^{\partial \tilde{M}} u,u \big\rangle - \frac{1}{2} \int_{\partial \tilde{M}} H |u|^2 \\
&\qquad= \frac{1}{2} \int_{\partial \tilde{M}} \big\langle \mathcal{D}^{\partial \tilde{M}} u,u \big\rangle + \frac{1}{2} \int_{\partial \tilde{M}} \big\langle u,\mathcal{D}^{\partial \tilde{M}} u \big\rangle - \frac{1}{2} \int_{\partial \tilde{M}} H |u|^2.
\end{align*}
Since $\mathcal{D}^{\partial \tilde{M}}$ is self-adjoint and anti-commutes with $\nu$, we find
\begin{align*}
\int_{\partial_\pm \tilde{M}} \big\langle \mathcal{D}^{\partial \tilde{M}} u,{\rm i} \nu \cdot u \big\rangle
&=
\int_{\partial_\pm \tilde{M}} \big\langle u,\mathcal{D}^{\partial \tilde{M}}({\rm i} \nu \cdot u) \big\rangle =
-\int_{\partial_\pm \tilde{M}} \big\langle u,{\rm i} \nu \cdot\mathcal{D}^{\partial \tilde{M}} u \big\rangle \\
&=
-\int_{\partial_\pm \tilde{M}} \big\langle {\rm i} \nu \cdot u,\mathcal{D}^{\partial \tilde{M}} u \big\rangle .
\end{align*}
Therefore,
\begin{align}
& - \int_{\tilde{M}}\big|\mathcal{D}^{\tilde{S} \otimes \tilde{E}} u\big|^2 + \int_{\tilde{M}} \big|\nabla^{\tilde{S} \otimes \tilde{E}} u\big|^2 + \frac{1}{4} \int_{\tilde{M}} R |u|^2 + \int_{\tilde{M}} \big\langle \mathcal{R}^{\tilde{E}} u,u \big\rangle \nonumber\\
&\qquad= \frac{1}{2} \int_{\partial_+ \tilde{M}} \big\langle \mathcal{D}^{\partial \tilde{M}} u,u + {\rm i} \nu \cdot u \big\rangle + \frac{1}{2} \int_{\partial_+ \tilde{M}} \big\langle u + {\rm i} \nu \cdot u,\mathcal{D}^{\partial \tilde{M}} u \big\rangle\nonumber \\
&\qquad\phantom{=}{} + \frac{1}{2} \int_{\partial_- \tilde{M}} \big\langle \mathcal{D}^{\partial \tilde{M}} u,u - {\rm i} \nu \cdot u \big\rangle + \frac{1}{2} \int_{\partial_- \tilde{M}} \big\langle u - {\rm i} \nu \cdot u,\mathcal{D}^{\partial \tilde{M}} u \big\rangle
 - \frac{1}{2} \int_{\partial \tilde{M}} H |u|^2.\label{eq:formula1}
\end{align}
Using the definition of $\tilde{P} u$ and a local orthonormal frame $e_1,\dots,e_n,T$ on $\tilde{M}$, we compute
\begin{align}
\big|\tilde{P} u\big|^2
={}&
\sum_{k=1}^n \left| \nabla_{e_k}^{\tilde{S} \otimes \tilde{E}} u + \frac{\rm i}{2} \Psi e_k \cdot u \right|^2 \nonumber\\
={}&
\big|\nabla^{\tilde{S} \otimes \tilde{E}} u\big|^2 - \big|\nabla_T^{\tilde{S} \otimes \tilde{E}} u\big|^2 + \frac{n}{4} \Psi^2 |u|^2 \nonumber\\
& + \frac{\rm i}{2} \Psi \big\langle \mathcal{D}^{\tilde{S} \otimes \tilde{E}} u - T \cdot \nabla_T^{\tilde{S} \otimes \tilde{E}} u,u \big\rangle - \frac{\rm i}{2} \Psi \big\langle u,\mathcal{D}^{\tilde{S} \otimes \tilde{E}} u - T \cdot \nabla_T^{\tilde{S} \otimes \tilde{E}} u \big\rangle \nonumber\\
={}&
\big|\nabla^{\tilde{S} \otimes \tilde{E}} u\big|^2 - \big|\nabla_T^{\tilde{S} \otimes \tilde{E}} u\big|^2 + \frac{n}{4} \Psi^2 |u|^2 \nonumber\\
& + \frac{\rm i}{2} \Psi \big\langle \mathcal{D}^{\tilde{S} \otimes \tilde{E}} u,u \big\rangle - \frac{\rm i}{2} \Psi \big\langle u,\mathcal{D}^{\tilde{S} \otimes \tilde{E}} u \big\rangle - \frac{\rm i}{2} \Psi T(\langle T \cdot u,u \rangle).\label{eq:Pu2}
\end{align}
Using the divergence theorem, we find
\begin{align}
\int_{\tilde{M}} \Psi T(\langle T \cdot u,u \rangle)
={}& \int_{\partial \tilde{M}} \Psi \langle T \cdot u,u \rangle \langle T,\nu \rangle \nonumber\\
& - \int_{\tilde{M}} \Psi \langle T \cdot u,u \rangle \mathrm{div} T - \int_{\tilde{M}} T(\Psi) \langle T \cdot u,u \rangle = 0 .\label{eq:lastterm}
\end{align}
We integrate \eqref{eq:Pu2} over $\tilde{M}$ and insert \eqref{eq:lastterm} and obtain
\begin{align}
\int_{\tilde{M}} \big|\tilde{P} u\big|^2
={}& \int_{\tilde{M}} \big|\nabla^{\tilde{S} \otimes \tilde{E}} u\big|^2 - \int_{\tilde{M}} \big|\nabla_T^{\tilde{S} \otimes \tilde{E}} u\big|^2 + \frac{n}{4} \int_{\tilde{M}} \Psi^2 |u|^2\nonumber \\
& + \frac{\rm i}{2} \int_{\tilde{M}} \Psi \big\langle \mathcal{D}^{\tilde{S} \otimes \tilde{E}} u,u \big\rangle - \frac{\rm i}{2} \int_{\tilde{M}} \Psi \big\langle u,\mathcal{D}^{\tilde{S} \otimes \tilde{E}} u \big\rangle.\label{eq:Pu2neu}
\end{align}
Substituting \eqref{eq:Pu2neu} into \eqref{eq:formula1}, we obtain
\begin{align}
&- \int_{\tilde{M}} \big|\mathcal{D}^{\tilde{S} \otimes \tilde{E}} u\big|^2 + \int_{\tilde{M}} \big|\tilde{P} u\big|^2 + \int_{\tilde{M}} \big|\nabla_T^{\tilde{S} \otimes \tilde{E}} u\big|^2 + \frac{1}{4} \int_{\tilde{M}} R |u|^2
+ \int_{\tilde{M}} \big\langle \mathcal{R}^{\tilde{E}} u,u \big\rangle \nonumber\\
&- \frac{n}{4} \int_{\tilde{M}} \Psi^2 |u|^2
 - \frac{\rm i}{2} \int_{\tilde{M}} \Psi \big\langle \mathcal{D}^{\tilde{S} \otimes \tilde{E}} u,u \big\rangle + \frac{\rm i}{2} \int_{\tilde{M}} \Psi \big\langle u,\mathcal{D}^{\tilde{S} \otimes \tilde{E}} u \big\rangle\nonumber \\
&\qquad= \frac{1}{2} \int_{\partial_+ \tilde{M}} \big\langle \mathcal{D}^{\partial \tilde{M}} u,u + {\rm i} \nu \cdot u \big\rangle + \frac{1}{2} \int_{\partial_+ \tilde{M}} \big\langle u + {\rm i} \nu \cdot u,\mathcal{D}^{\partial \tilde{M}} u \big\rangle\nonumber \\
&\qquad\phantom{=}{} + \frac{1}{2} \int_{\partial_- \tilde{M}} \big\langle \mathcal{D}^{\partial \tilde{M}} u,u - {\rm i} \nu \cdot u \big\rangle + \frac{1}{2} \int_{\partial_- \tilde{M}} \big\langle u - {\rm i} \nu \cdot u,\mathcal{D}^{\partial \tilde{M}} u \big\rangle - \frac{1}{2} \int_{\partial \tilde{M}} H |u|^2.\label{eq:formula2}
\end{align}
Using the divergence theorem, we obtain
\begin{align}
&- \frac{{\rm i}(n-1)}{2} \int_{\tilde{M}} \Psi \big\langle \mathcal{D}^{\tilde{S} \otimes \tilde{E}} u,u \big\rangle + \frac{{\rm i}(n-1)}{2} \int_{\tilde{M}} \Psi \big\langle u,\mathcal{D}^{\tilde{S} \otimes \tilde{E}} u \big\rangle
- \frac{{\rm i}(n-1)}{2} \int_{\tilde{M}} \langle (\nabla \Psi) \cdot u,u \rangle\nonumber \\
&\qquad= - \frac{{\rm i}(n-1)}{2} \int_{\tilde{M}} \big\langle \mathcal{D}^{\tilde{S} \otimes \tilde{E}} (\Psi u),u \big\rangle + \frac{{\rm i}(n-1)}{2} \int_{\tilde{M}} \big\langle \Psi u,\mathcal{D}^{\tilde{S} \otimes \tilde{E}} u \big\rangle\nonumber \\
&\qquad= -\frac{{\rm i}(n-1)}{2} \int_{\partial \tilde{M}} \langle \nu \cdot (\Psi u),u \rangle.\label{eq:formula3}
\end{align}
Adding \eqref{eq:formula2} and \eqref{eq:formula3} gives
\begin{align*}
&- \int_{\tilde{M}} \big|\mathcal{D}^{\tilde{S} \otimes \tilde{E}} u\big|^2 + \int_{\tilde{M}} \big|\tilde{P} u\big|^2 + \int_{\tilde{M}} \big|\nabla_T^{\tilde{S} \otimes \tilde{E}} u\big|^2 + \frac{1}{4} \int_{\tilde{M}} R |u|^2
+ \int_{\tilde{M}} \big\langle \mathcal{R}^{\tilde{E}} u,u \big\rangle \\
&- \frac{n}{4} \int_{\tilde{M}} \Psi^2 |u|^2 - \frac{{\rm i}(n-1)}{2} \int_{\tilde{M}} \langle (\nabla \Psi) \cdot u,u \rangle
- \frac{{\rm i}n}{2} \int_{\tilde{M}} \Psi \big\langle \mathcal{D}^{\tilde{S} \otimes \tilde{E}} u,u \big\rangle + \frac{{\rm i}n}{2} \int_{\tilde{M}} \Psi \big\langle u,\mathcal{D}^{\tilde{S} \otimes \tilde{E}} u \big\rangle\\
&\qquad= \frac{1}{2} \int_{\partial_+ \tilde{M}} \big\langle \mathcal{D}^{\partial \tilde{M}} u,u + {\rm i} \nu \cdot u \big\rangle + \frac{1}{2} \int_{\partial_+ \tilde{M}} \big\langle u + {\rm i} \nu \cdot u,\mathcal{D}^{\partial \tilde{M}} u \big\rangle \\
&\phantom{\qquad=}{} - \frac{1}{2} \int_{\partial_+ \tilde{M}} (H-(n-1)\Psi) |u|^2 - \frac{n-1}{2} \int_{\partial_+ \tilde{M}} \Psi \langle u + {\rm i} \nu \cdot u,u \rangle \\
&\phantom{\qquad=}{} + \frac{1}{2} \int_{\partial_- \tilde{M}} \big\langle \mathcal{D}^{\partial \tilde{M}} u,u - {\rm i} \nu \cdot u \big\rangle + \frac{1}{2} \int_{\partial_- \tilde{M}} \big\langle u - {\rm i} \nu \cdot u,\mathcal{D}^{\partial \tilde{M}} u \big\rangle \\
&\phantom{\qquad=}{} - \frac{1}{2} \int_{\partial_- \tilde{M}} (H+(n-1)\Psi) |u|^2 + \frac{n-1}{2} \int_{\partial_- \tilde{M}} \Psi \langle u - {\rm i} \nu \cdot u,u \rangle.
\end{align*}
This completes the proof of Proposition~\ref{integral_formula_even_dim}.
\end{proof}

At this point, we specify our choice of the function $\Psi$. We define a function $\psi\colon [\theta_-, \theta_+] \to \R$ by $\psi(\theta) := \frac{\rho'(\theta)}{\rho(\theta)}$.
Since $(\log \rho)''<0$, we know that $-\psi' > 0$ on $[\theta_-,\theta_+]$.
Using \eqref{scal_g_0}, the inequality $R \geq R_{g_0} \circ \Phi$ gives
\begin{equation} \label{ineq_for_scal_even_dim}
R \geq (n-1) \left( -2 \psi'(\Theta) - n \psi^2(\Theta) + \frac{n-2}{\rho^2(\Theta)} \right).
\end{equation}
We define $\Psi\colon M \to \R$ by $\Psi = \psi \circ \Theta$.

\begin{Proposition}
\label{existence_of_spinor_even_dim}
Assume that \smash{$r > 2 \sup_{[\theta_-,\theta_+]} \rho$}. Then we can find an element $t_0 \in S^1$ and a~section $u \in C^\infty\big(\tilde{M},\tilde{S} \otimes \tilde{E}\big)$ such that
\[\int_{M \times \{t_0\}} \frac{1}{\rho(\Theta)} |u|^2 = 1\]
and
\[\int_{M \times \{t_0\}} \big|\tilde{P} u\big|^2 \leq \frac{n-1}{r}.\]
\end{Proposition}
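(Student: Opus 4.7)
The strategy is to harvest a nontrivial section from $\mathrm{ind}_1>0$, plug it into the integral formula of Proposition~\ref{integral_formula_even_dim} with the twisted Dirac equation that kills the $|\mathcal{D}u-\tfrac{{\rm i}n}{2}\Psi u|^2$ integrand, and then average over the $S^1$ factor to pick the slice $\{t_0\}\times M$.

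Since the boundary conditions $u=-{\rm i}\nu\cdot u$ on $\partial_+\tilde{M}$ and $u={\rm i}\nu\cdot u$ on $\partial_-\tilde{M}$ involve only the principal symbol, the zero-order perturbation $\mathcal{D}^{\tilde{S}\otimes\tilde{E}}-\tfrac{{\rm i}n}{2}\Psi$ of $\mathcal{D}^{\tilde{S}\otimes\tilde{E}}$ is Fredholm with the same boundary conditions and the same index $\mathrm{ind}_1>0$; pick a nontrivial $U$ in its kernel. Multiplying the boundary conditions by $\nu$ and using $\nu\cdot\nu=-1$ gives $U+{\rm i}\nu\cdot U=0$ on $\partial_+\tilde{M}$ and $U-{\rm i}\nu\cdot U=0$ on $\partial_-\tilde{M}$, so every boundary Dirac integral on the right-hand side of Proposition~\ref{integral_formula_even_dim} vanishes identically. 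The remaining boundary terms $-\tfrac12\int_{\partial_\pm\tilde{M}}(H\mp(n-1)\Psi)|U|^2$ are non-positive, since $H_{\tilde{g}}=H_g\geq\pm(n-1)\psi(\theta_\pm)=\pm(n-1)\Psi$ along $\partial_\pm M$ by the mean curvature hypothesis of Theorem~\ref{spherical_band}.

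With this choice of $U$ the first integrand on the LHS of the integral formula vanishes, and after dropping the non-negative term $\int|\nabla_T U|^2$ one obtains
\begin{equation*}
\int_{\tilde{M}}\bigl|\tilde{P}U\bigr|^2 \leq -\tfrac14\int_{\tilde{M}} R\,|U|^2 -\int_{\tilde{M}}\bigl\langle\mathcal{R}^{\tilde{E}}U,U\bigr\rangle -\tfrac{n(n-1)}{4}\int_{\tilde{M}}\Psi^2|U|^2 +\tfrac{{\rm i}(n-1)}{2}\int_{\tilde{M}}\langle(\nabla\Psi)\cdot U,U\rangle.
\end{equation*}
Three pointwise estimates handle the RHS. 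The scalar curvature hypothesis~\eqref{ineq_for_scal_even_dim} yields $-\tfrac14 R-\tfrac{n(n-1)}{4}\Psi^2\leq\tfrac{n-1}{2}\psi'(\Theta)-\tfrac{(n-1)(n-2)}{4\rho^2(\Theta)}$, with the $\psi^2$ terms cancelling exactly. Lemma~\ref{gradient_of_Theta} combined with $\psi'<0$ gives $\tfrac{{\rm i}(n-1)}{2}\langle(\nabla\Psi)\cdot U,U\rangle\leq-\tfrac{n-1}{2}\psi'(\Theta)|U|^2$. A Llarull-type pointwise bound on the twist curvature $\mathcal{R}^{\tilde{E}}$, using that $\tilde{f}=h\circ(\varphi\times\id)$ has at most $n-1$ nonzero singular values bounded by $1/\rho(\Theta)$ in the $M$ directions and one bounded by $2/r$ in the $S^1$ direction, produces
\begin{equation*}
-\bigl\langle\mathcal{R}^{\tilde{E}}U,U\bigr\rangle\leq\Bigl(\tfrac{(n-1)(n-2)}{4\rho^2(\Theta)}+\tfrac{n-1}{r\rho(\Theta)}\Bigr)|U|^2.
\end{equation*}
Summing these three estimates, the $\psi'$ and $1/\rho^2$ contributions cancel exactly, giving $\int_{\tilde{M}}|\tilde{P}U|^2\leq\tfrac{n-1}{r}\int_{\tilde{M}}|U|^2/\rho(\Theta)$.

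Since $d\mathrm{vol}_{\tilde{g}}=r\,d\mathrm{vol}_g\,dt$, this reads $\int_{S^1}a(t)\,dt\leq\tfrac{n-1}{r}\int_{S^1}b(t)\,dt$ with $a(t):=\int_{M\times\{t\}}|\tilde{P}U|^2$ and $b(t):=\int_{M\times\{t\}}|U|^2/\rho(\Theta)$. The elementary mean-value principle $\min_t a(t)/b(t)\leq\int a/\int b$, applied on the positivity set of $b$ (which is non-empty since $U\not\equiv 0$), produces $t_0\in S^1$ with $a(t_0)\leq\tfrac{n-1}{r}b(t_0)$ and $b(t_0)>0$; rescaling $U$ by $b(t_0)^{-1/2}$ delivers the section $u$ required by the proposition. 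The main obstacle is establishing the Llarull-type pointwise bound on $\mathcal{R}^{\tilde{E}}$ together with the exact algebraic cancellation it must satisfy against the scalar curvature estimate~\eqref{ineq_for_scal_even_dim}; the log-concavity hypothesis $(\log\rho)''<0$ is used through $\psi'<0$, and the hypothesis $r>2\sup\rho$ ensures that the $S^1$-direction singular value $2/r$ is dominated by the $M$-direction singular values $1/\rho$, which is what makes the Llarull bound above take such a clean form.
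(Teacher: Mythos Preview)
Your proof is correct and follows essentially the same route as the paper's: produce a nontrivial kernel element for $\mathcal{D}^{\tilde S\otimes\tilde E}-\tfrac{{\rm i}n}{2}\Psi$ via $\mathrm{ind}_1>0$, feed it into Proposition~\ref{integral_formula_even_dim}, bound the interior terms pointwise using \eqref{ineq_for_scal_even_dim}, Lemma~\ref{gradient_of_Theta}, and the Llarull curvature estimate, and finally average over $S^1$. The one place where your write-up is looser than the paper is the singular value bound: because $h$ mixes the $S^{n-1}$ and $S^1$ factors, the singular values of $d\tilde f$ do not literally split into ``$M$-direction'' and ``$S^1$-direction'' values; the paper instead bounds $\tilde f^*g_{S^n}\le \varphi^*g_{S^{n-1}}+4g_{S^1}$ and $\tilde g\ge \rho^2\varphi^*g_{S^{n-1}}+r^2g_{S^1}$ and applies the min-max characterization (this is where $r>2\rho(\Theta)$ enters, to ensure $\mu_1,\dots,\mu_{n-1}\le 1/\rho(\Theta)$ and $\mu_n\le 2/r$ in the stated order), but your final curvature inequality is the correct one.
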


\begin{proof}
Recall that we are assuming $\mathrm{ind}_1 > 0$. In view of the deformation invariance of the index, we can find a section $u \in C^\infty\big(\tilde{M},\tilde{S} \otimes \tilde{E}\big)$ such that
\begin{itemize}\itemsep=0pt
 \item $u$ does not vanish identically,
 \item $\mathcal{D}^{\tilde{S} \otimes \tilde{E}} u - \frac{{\rm i}n}{2} \Psi u = 0$ on $\tilde{M}$,
 \item $u = -{\rm i} \nu \cdot u$ on $\partial_+ \tilde{M}$ and $u = {\rm i} \nu \cdot u$ on $\partial_- \tilde{M}$.
\end{itemize}
Using Proposition~\ref{integral_formula_even_dim}, we obtain
\begin{align}
&\int_{\tilde{M}} \big|\tilde{P} u\big|^2 + \int_{\tilde{M}} \big|\nabla_T^{\tilde{S} \otimes \tilde{E}} u\big|^2 + \frac{1}{4} \int_{\tilde{M}} R |u|^2\nonumber \\
&\qquad{}+ \int_{\tilde{M}} \big\langle \mathcal{R}^{\tilde{E}} u,u \big\rangle + \frac{n(n-1)}{4} \int_{\tilde{M}} \Psi^2 |u|^2 - \frac{{\rm i}(n-1)}{2} \int_{\tilde{M}} \langle (\nabla \Psi) \cdot u,u \rangle \nonumber\\
&\phantom{\qquad+}{}= -\frac{1}{2} \int_{\partial_+ \tilde{M}} (H-(n-1)\Psi) |u|^2 - \frac{1}{2} \int_{\partial_- \tilde{M}} (H+(n-1)\Psi) |u|^2.\label{important_estimate_even_dim}
\end{align}
By assumption and using \eqref{mean_g_0}, \[H-(n-1)\Psi = H-(n-1) \frac{\rho'(\theta_+)}{\rho(\theta_+)} \geq 0\] on $\partial_+ \tilde{M}$ and \[H +(n-1)\Psi \geq H+(n-1) \frac{\rho'(\theta_-)}{\rho(\theta_-)} \geq 0\] on $\partial_- \tilde{M}$. Consequently, the right-hand side in \eqref{important_estimate_even_dim} is non-positive.

We next analyze the term $\mathcal{R}^{\tilde{E}}$. To that end, we fix a point $(x,t) \in \tilde{M}$. Let $ \mu_1,\hdots,\mu_{n+1} \geq 0$ denote the singular values of the differential \smash{$d\tilde{f}_{(x,t)}\colon \big(T_{(x,t)} \tilde{M},\tilde{g}\big) \to (T_{\tilde{f}(x,t)} S^n,g_{S^n})$}, arranged in decreasing order. Since the differential \smash{$d\tilde{f}_{(x,t)}$} has rank at most $n$, it follows that $\mu_{n+1} = 0$.
The eigenvalues of the symmetric bilinear form $\tilde{f}^* g_{S^n}$ with respect to the metric $\tilde{g} = g + r^2 g_{S^1}$ are given by $\mu_1^2,\hdots,\mu_n^2,0$.

Using \eqref{ineq_for_g}, we obtain, on the one hand,
\[
 \tilde{g} = g + r^2 g_{S^1} \geq \rho^2(\Theta) \varphi^* g_{S^{n-1}} + r^2 g_{S^1}.
 \]
On the other hand, the inequality $h^* g_{S^n} \leq g_{S^{n-1}} + 4 g_{S^1}$ implies
\[
 \tilde{f}^* g_{S^n} \leq \varphi^* g_{S^{n-1}} + 4 g_{S^1}.
\]
By assumption, $r > 2\rho(\Theta)$ at the point $(x,t)$. Hence, the min-max characterization of the eigenvalues implies that $\mu_1^2,\hdots,\mu_{n-1}^2 \leq \frac{1}{\rho^2(\Theta)}$ and $\mu_n^2 \leq \frac{4}{r^2}$.
Together with Proposition~\ref{curvature.term}, this implies
\begin{align*}
\big\langle \mathcal{R}^{\tilde{E}} u,u \big\rangle
&\geq -\frac{1}{4} \sum_{\substack{1\leq k,j\leq n\\ j \neq k}} \mu_j \mu_k |u|^2
\geq -\frac{(n-2)(n-1)}{4} \frac{1}{\rho^2(\Theta)} |u|^2 - \frac{n-1}{r} \frac{1}{\rho(\Theta)} |u|^2.
\end{align*}
Recall that $|\nabla \Theta| \leq 1$ by Lemma~\ref{gradient_of_Theta}.
Since $-\psi' > 0$ on $[\theta_-,\theta_+]$, it follows that $|\nabla \Psi| \leq -\psi'(\Theta)$.
Using \eqref{ineq_for_scal_even_dim}, we obtain the pointwise estimate
\begin{align*}
&\frac{1}{4} R |u|^2 + \big\langle \mathcal{R}^{\tilde{E}} u,u \big\rangle + \frac{n(n-1)}{4} \Psi^2 |u|^2 - \frac{n-1}{2} |\nabla \Psi| |u|^2 \\
&\qquad\geq \frac{1}{4} R |u|^2 - \frac{(n-2)(n-1)}{4} \frac{1}{\rho^2(\Theta)} |u|^2 - \frac{n-1}{r} \frac{1}{\rho(\Theta)} |u|^2\\
&\phantom{\qquad\geq }{}+ \frac{n(n-1)}{4} \psi^2(\Theta) |u|^2 + \frac{n-1}{2} \psi'(\Theta) |u|^2 \\
&\qquad\geq -\frac{n-1}{r} \frac{1}{\rho(\Theta)} |u|^2.
\end{align*}
Putting these facts together, we conclude that
\[
\int_{\tilde{M}} \big|\tilde{P} u\big|^2 \leq \frac{n-1}{r} \int_{\tilde{M}} \frac{1}{\rho(\Theta)} |u|^2.
\]
Hence, we can find an element $t_0 \in S^1$ such that
\[
\int_{M \times \{t_0\}} \big|\tilde{P} u\big|^2 \leq \frac{n-1}{r} \int_{M \times \{t_0\}} \frac{1}{\rho(\Theta)} |u|^2
\]
and $\int_{M \times \{t_0\}} \frac{1}{\rho(\Theta)} |u|^2 > 0$.
From this, the assertion follows.
\end{proof}

\begin{Corollary}
\label{limiting_spinor_even_dim}
There exists an element $t_0 \in S^1$ with the following property.
Let $f\colon M \to S^n$ be defined by
\[
 f(x) := \tilde{f}(x,t_0) = h(\varphi(x) ,t_0).
\]
Let $E$ denote the pull-back of $E_0^+$ under the map $f$.
Then there exists a section $s \in C^\infty(M,S \otimes E)$ such that
\[
 \int_M |s|^2 = 1
\]
and
\[
 \nabla_X^{S \otimes E} s + \frac{\rm i}{2} \Psi X \cdot s = 0
\]
for every vector field $X$.
\end{Corollary}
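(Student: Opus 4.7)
My plan is to construct $s$ as a limit of the sections produced by Proposition \ref{existence_of_spinor_even_dim} as the circle radius $r$ is sent to infinity. Fix a sequence $r_k \to \infty$ with $r_k > 2\sup_{[\theta_-,\theta_+]} \rho$, and for each $k$ apply Proposition \ref{existence_of_spinor_even_dim} to obtain $t_k \in S^1$ together with $u_k \in C^\infty(\tilde{M}, \tilde{S} \otimes \tilde{E})$ satisfying
\[
\int_{M \times \{t_k\}} \frac{1}{\rho(\Theta)} |u_k|^2 = 1, \qquad \int_{M \times \{t_k\}} |\tilde{P} u_k|^2 \le \frac{n-1}{r_k}.
\]
Since $S^1$ is compact, after passing to a subsequence $t_k \to t_0 \in S^1$, and I define $f(x) = h(\varphi(x), t_0)$ and $E = f^* E_0^+$, with the aim of producing the desired section $s$ over the fixed bundle $S \otimes E$ on $M$.

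To compare sections living over varying slices, I would use that $\tilde{E}|_{M \times \{t\}} = f_t^* E_0^+$ depends smoothly on $t$ and, via parallel transport along the $T$-direction, is canonically isomorphic to the fixed bundle $E$ on $M$. Under this identification, $\tilde{S} \otimes \tilde{E}|_{M \times \{t_k\}}$ becomes $S \otimes E$, and the restriction $u_k|_{M\times\{t_k\}}$ pulls back to smooth sections $s_k \in C^\infty(M, S \otimes E)$ with $\int_M \frac{1}{\rho(\Theta)} |s_k|^2 = 1$. Because $\rho$ is bounded above and below away from 0 on $[\theta_-, \theta_+]$, the norms $\|s_k\|_{L^2(M)}$ stay bounded between two positive constants.

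For a vector field $X$ tangent to $M$, the definition \eqref{def:tildeP} gives, on the slice $M \times \{t_k\}$, the expression $\tilde{P}_X u_k = \nabla_X^{\tilde{S} \otimes \tilde{E}} u_k + \frac{i}{2} \Psi X \cdot u_k$. Under the bundle identification above, this pulls back to $\nabla_X^{S \otimes E} s_k + \frac{i}{2} \Psi X \cdot s_k$ plus an error coming from the difference of connections along the $T$-direction, which tends to $0$ uniformly as $t_k \to t_0$. Combined with the integral bound on $|\tilde{P} u_k|^2$ and the boundedness of $\Psi$, this yields $\|\nabla^{S \otimes E} s_k\|_{L^2(M)} \le C$. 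Hence $\{s_k\}$ is bounded in $W^{1,2}(M, S \otimes E)$, and Rellich--Kondrachov furnishes a subsequence converging strongly in $L^2$ and weakly in $W^{1,2}$ to some $s \in W^{1,2}(M, S \otimes E)$. Strong $L^2$ convergence combined with the uniform lower bound $\|s_k\|_{L^2} \ge c > 0$ forces $\|s\|_{L^2} > 0$.

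Passing to the limit in the weak formulation of $\tilde{P}_X s_k \to 0$ gives the Killing-type equation $\nabla_X^{S \otimes E} s + \frac{i}{2} \Psi X \cdot s = 0$ for every smooth vector field $X$. Since this equation determines $\nabla s$ algebraically in terms of $s$, standard bootstrapping starting from $s \in W^{1,2}$ shows $s \in C^\infty(M, S \otimes E)$. After rescaling, we may assume $\int_M |s|^2 = 1$, completing the construction. The main obstacle in this argument is the compactness step: one must set up the bundle identifications carefully so that the $s_k$ genuinely live in a fixed Hilbert space and vary continuously in the parameter $t_k$, and then extract a \emph{non-trivial} $L^2$-limit. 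Once the uniform $W^{1,2}$-bound and the uniform lower $L^2$-bound are in hand, the Rellich argument together with the pointwise nature of the limiting equation delivers both smoothness and non-vanishing of $s$.
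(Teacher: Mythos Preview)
Your proposal is correct and follows essentially the same approach as the paper: take $r_\ell \to \infty$, extract a convergent subsequence $t_\ell \to t_0$, identify the varying bundles $f_{t_\ell}^* E_0^+$ with the fixed bundle $E = f_{t_0}^* E_0^+$, use the $\tilde{P}$-estimate to get an $H^1$-bound, pass to a weak limit, and conclude smoothness from the overdetermined first-order equation. The only minor difference is that you identify the bundles via parallel transport along the $T$-direction in $\tilde{M}$, whereas the paper uses parallel transport along shortest geodesics in $S^n$ between $f^{(\ell)}(x)$ and $f(x)$; both yield isometries with connection error tending to zero uniformly as $t_\ell \to t_0$, so the arguments are interchangeable.
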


\begin{proof}
Let us consider a sequence $r_\ell \to \infty$. For each $\ell$, Proposition~\ref{existence_of_spinor_even_dim} implies the existence of an element $t_\ell \in S^1$ and a section $u^{(\ell)} \in C^\infty\big(\tilde{M},\tilde{S} \otimes \tilde{E}\big)$ such that
\[
 \int_{M \times \{t_\ell\}} \frac{1}{\rho(\Theta)}\big|u^{(\ell)}\big|^2 = 1
\]
and
\[\int_{M \times \{t_\ell\}} \big|\tilde{P} u^{(\ell)}\big|^2 \leq \frac{n-1}{r_\ell}.
\]
After passing to a subsequence, we may assume that the sequence $t_\ell$ converges to an element $t_0 \in S^1$. We define maps $f\colon M \to S^n$ and $f^{(\ell)}\colon M \to S^n$ by
\[
 f(x) := \tilde{f}(x,t_0) = h(\varphi(x),t_0), \qquad f^{(\ell)}(x) := \tilde{f}(x,t_\ell) = h(\varphi(x) ,t_\ell)
\]
for $x \in M$.
Let $E$ denote the pull-back of $E_0^+$ under $f$, and let $E^{(\ell)}$ denote the pull-back of $E_0^+$ under $f^{(\ell)}$.
The restriction of $u^{(\ell)}$ to $M \times \{t_\ell\}$ gives a section $s^{(\ell)} \in C^\infty\big(M,S \otimes E^{(\ell)}\big)$ such that
\[
 \int_M \frac{1}{\rho(\Theta)}\big|s^{(\ell)}\big|^2 = 1
 \]
and
\begin{equation}
 \int_M \sum_{k=1}^n \left| \nabla_{e_k}^{S \otimes E^{(\ell)}} s^{(\ell)} + \frac{\rm i}{2} \psi(\Theta) e_k \cdot s^{(\ell)} \right|^2 \leq \frac{n-1}{r_\ell}.
\label{eq:almostflat}
\end{equation}
For each $\ell$, we define a bundle map $\sigma^{(\ell)}\colon E^{(\ell)} \to E$ as follows.
For each point $x \in M$, the map \smash{$\sigma^{(\ell)}_x\colon E^{(\ell)}_x = \big(E_0^+\big)_{f^{(\ell)}(x)} \to E_x = \big(E_0^+\big)_{f(x)}$} is defined as the parallel transport along the shortest geodesic from $f^{(\ell)}(x) \in S^n$ to $f(x) \in S^n$.
It is easy to see that $\sigma^{(\ell)}$ is a bundle isometry for each~$\ell$. It follows that the map $\big(\id \otimes \sigma^{(\ell)}\big)\colon S \otimes E^{(\ell)} \to S \otimes E$ is a bundle isometry for each $\ell$.

We may write
\begin{equation}
\label{eq:A_l_term}
\nabla^{S \otimes E}\big(\big(\id \otimes \sigma^{(\ell)}\big) s^{(\ell)}\big) = \big(\id \otimes \sigma^{(\ell)}\big) \big(\nabla^{S \otimes E^{(\ell)}} s^{(\ell)} + A^{(\ell)} s^{(\ell)}\big),
\end{equation}
where $A^{(\ell)}$ is a $1$-form taking values in the endomorphism bundle $\text{\rm End}\big(S \otimes E^{(\ell)}\big)$. Since $t_\ell \to t_0$, the maps $f_\ell$ converge to $f$ smoothly. From this, we deduce that \smash{$\big|A^{(\ell)}\big| \to 0$} uniformly.

By \eqref{eq:almostflat}, \smash{$\nabla^{S \otimes E^{(\ell)}} s^{(\ell)}$} is bounded in $L^2$. Using \eqref{eq:A_l_term}, we conclude that \smash{$\nabla^{S \otimes E} \big(\big(\id \otimes \sigma^{(\ell)}\big) s^{(\ell)}\big)$} is bounded in $L^2$. So, the sequence \smash{$\big(\id \otimes \sigma^{(\ell)}\big) s^{(\ell)} \in C^\infty(M,S \otimes E)$} is bounded in $H^1(M,S \otimes E)$.
After passing to a subsequence, the sequence \smash{$\big(\id \otimes \sigma^{(\ell)}\big) s^{(\ell)} \in C^\infty(M,S \otimes E)$} converges, in the weak topology of $H^1(M,S \otimes E)$, to a section $s$.
Since weak $H^1$-convergence implies strong $L^2$-convergence, the limit $s \in H^1(M,S \otimes E)$ satisfies
\[
\int_M \frac{1}{\rho(\Theta)} |s|^2 = 1.
\]
The inequality \eqref{eq:almostflat} implies that
\begin{equation}
\nabla_X^{S \otimes E} s + \frac{\rm i}{2} \Psi X \cdot s = 0
\label{eq:parallel}
\end{equation}
for every smooth vector field $X$ on $M$, where \eqref{eq:parallel} is understood in the sense of distributions.
Since \eqref{eq:parallel} holds for every smooth vector field $X$ on $M$, it follows
that $s$ is a weak solution of an overdetermined elliptic system.
By elliptic regularity, $s$ is smooth and \eqref{eq:parallel} holds classically.
Rescaling $s$ concludes the proof.
\end{proof}

\begin{Definition}
Let $t_0 \in S^1$, $f\colon M \to S^n$, and $E$ be defined as in Corollary~\ref{limiting_spinor_even_dim}.
We define the modified connection $\nabla^\Psi$ on $S \otimes E $ by
\begin{equation}
\nabla_X^\Psi s = \nabla_X^{S \otimes E} s + \frac{\rm i}{2} \Psi X \cdot s,
\label{def:nablapsi}
\end{equation}
where $\nabla^{S \otimes E}$ denotes the connection on $S \otimes E$ induced by the ones on $S$ and $E$.
\end{Definition}

\begin{Lemma}
The curvature tensor $R^\Psi$ of the connection $\nabla^\Psi$ defined in \eqref{def:nablapsi} satisfies:
\begin{align}
R^\Psi_{X,Y}s
&=
R^{S\otimes E}_{X,Y}s + \frac{\rm i}{2} ({\rm d}\Psi(X)Y-{\rm d}\Psi(Y)X) \cdot s - \frac{1}{4} \Psi^2 (X\cdot Y-Y\cdot X)\cdot s .
\label{eq:RPsi}
\end{align}
Here $R^{S\otimes E}$ denotes the curvature tensor of $\nabla^{S\otimes E}$.
Moreover, the curvature term in the Weitzen\-b\"ock formula satisfies
\begin{align}
\frac{1}{2} \sum_{\substack{1\leq j,k\leq n\\ j \neq k}} e_j\cdot e_k\cdot R^\Psi_{e_j,e_k}s
=
\left(\frac{1}{4} R + \mathcal{R}^E\right)s - \frac{{\rm i}(n-1)}{2} \nabla\Psi\cdot s + \frac{n(n-1)}{4} \Psi^2 s,
\label{eq:Weitzenbock}
\end{align}
where $e_1,\dots,e_n$ is a local orthonormal frame.
\end{Lemma}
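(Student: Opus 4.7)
The plan is to verify both identities by direct computation, using only the definition of $\nabla^\Psi$ in \eqref{def:nablapsi}, the fact that $\nabla^{S\otimes E}$ is compatible with Clifford multiplication (so $\nabla_X^{S\otimes E}(Y\cdot s) = (\nabla_X Y)\cdot s + Y\cdot \nabla_X^{S\otimes E} s$), and standard Clifford-algebra identities in an orthonormal frame. Everything is pointwise, so I would work at a fixed point $p\in M$ and choose vector fields with $\nabla_X Y = \nabla_Y X = 0$ at $p$ and $[X,Y]=0$; then no torsion/connection terms on the vector fields appear.

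For the first identity \eqref{eq:RPsi}, I would expand
\[
\nabla^\Psi_X \nabla^\Psi_Y s = \nabla^{S\otimes E}_X\Bigl(\nabla^{S\otimes E}_Y s + \tfrac{\rm i}{2}\Psi\, Y\cdot s\Bigr) + \tfrac{\rm i}{2}\Psi\, X\cdot\Bigl(\nabla^{S\otimes E}_Y s + \tfrac{\rm i}{2}\Psi\, Y\cdot s\Bigr),
\]
using the Leibniz rule (and $X(\Psi) = {\rm d}\Psi(X)$) and the Clifford-compatibility of $\nabla^{S\otimes E}$. The analogous formula with $X,Y$ swapped is subtracted. At $p$ the terms $\nabla^{S\otimes E}_X\nabla^{S\otimes E}_Y s$ minus its swap give $R^{S\otimes E}_{X,Y}s$; the ``cross'' terms $\tfrac{\rm i}{2}\Psi X\cdot \nabla^{S\otimes E}_Y s$ and $\tfrac{\rm i}{2}\Psi Y\cdot \nabla^{S\otimes E}_X s$ cancel between the two iterated derivatives; the derivative of $\Psi$ produces $\tfrac{\rm i}{2}({\rm d}\Psi(X) Y - {\rm d}\Psi(Y) X)\cdot s$; and the quadratic $\Psi$ term leaves $-\tfrac{1}{4}\Psi^2(X\cdot Y - Y\cdot X)\cdot s$. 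This yields exactly \eqref{eq:RPsi}.

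For the Weitzenb\"ock-type identity \eqref{eq:Weitzenbock}, I would substitute \eqref{eq:RPsi} into $\tfrac12\sum_{j\neq k} e_j\cdot e_k\cdot R^\Psi_{e_j,e_k}s$ and split into three sums. The contribution of $R^{S\otimes E}$ is the standard twisted-Weitzenb\"ock curvature term $(\tfrac{1}{4}R + \mathcal{R}^E)s$. In the $\Psi$-linear sum, separate it into the two pieces coming from ${\rm d}\Psi(e_j)e_k$ and ${\rm d}\Psi(e_k)e_j$: using $e_k\cdot e_k = -1$ and $e_j\cdot e_k\cdot e_j = e_k$ (for $j\neq k$), each piece collapses to $-\tfrac{{\rm i}(n-1)}{4}\nabla\Psi\cdot s$, giving a total of $-\tfrac{{\rm i}(n-1)}{2}\nabla\Psi\cdot s$. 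In the $\Psi^2$ sum, $e_j\cdot e_k - e_k\cdot e_j = 2 e_j\cdot e_k$ for $j\neq k$, so one is left with $-\tfrac{\Psi^2}{4}\sum_{j\neq k}(e_j\cdot e_k)^2 s$; since $(e_j\cdot e_k)^2 = -1$ for $j\neq k$ and there are $n(n-1)$ such pairs, this contributes $\tfrac{n(n-1)}{4}\Psi^2 s$. Assembling the three pieces gives \eqref{eq:Weitzenbock}.

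The proof is essentially algebraic, and the only subtlety is bookkeeping: getting the factors $\tfrac12$, $\tfrac14$ and $\tfrac{\rm i}{2}$ right, and correctly handling the Clifford triple products $e_j\cdot e_k\cdot e_j$ and squares $(e_j\cdot e_k)^2$. The main ``obstacle'' I would watch for is the factor-of-two conventions in passing between sums over $j\neq k$ and over $j<k$, and the sign in $e_j\cdot e_j = -1$; both ingredients appear several times and must be applied consistently. No index-theoretic or analytic input is needed beyond the standard twisted Lichnerowicz formula.
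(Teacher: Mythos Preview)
Your proposal is correct and follows essentially the same approach as the paper: the derivation of \eqref{eq:RPsi} is identical (fix a point, choose vector fields with vanishing covariant derivative there, expand and anti-symmetrize), and for \eqref{eq:Weitzenbock} the paper uses the very same Clifford identities you list, the only cosmetic difference being that the paper solves \eqref{eq:RPsi} for $R^{S\otimes E}$ and substitutes into the standard Weitzenb\"ock identity, whereas you substitute \eqref{eq:RPsi} directly into the $R^\Psi$-sum---the computations are line-for-line the same up to a sign flip.
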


\begin{proof}
We check \eqref{eq:RPsi} at a fixed point on $M$. Let $X$ and $Y$ be vector fields defined in a~neighborhood of that point whose covariant derivatives vanish at the point. We compute at that point:
\begin{align*}
\nabla^\Psi_X \nabla^\Psi_Y s
=
\nabla^{S \otimes E}_X \nabla^{S \otimes E}_Y s + \frac{\rm i}{2} {\rm d}\Psi(X)Y\!\cdot\! s + \frac{\rm i}{2} \Psi Y \!\cdot\! \nabla^{S \otimes E}_X s + \frac{\rm i}{2} \Psi X \!\cdot\! \nabla^{S \otimes E}_Y s
 - \frac{1}{4} \Psi^2 X \!\cdot\! Y\cdot s.
\end{align*}
Anti-symmetrizing with respect to $X$ and $Y$ yields \eqref{eq:RPsi}.

As to \eqref{eq:Weitzenbock}, we use formula~(8.8) in \cite[Chapter~II]{Lawson-Michelsohn} and \eqref{eq:RPsi} and we find
\begin{align*}
&\left(\frac{1}{4} R + \mathcal{R}^E\right)s =
\frac{1}{2} \sum_{j \neq k} e_j\cdot e_k\cdot R^{S \otimes E}_{e_j,e_k} s \\
&\qquad=
\frac{1}{2} \sum_{j \neq k} e_j\cdot e_k\cdot \left( R^\Psi_{e_j,e_k} s - \frac{\rm i}{2} ({\rm d}\Psi(e_j)e_k-{\rm d}\Psi(e_k)e_j) \cdot s + \frac{1}{4} \Psi^2 (e_j\cdot e_k-e_k\cdot e_j) \cdot s \right) \\
&\qquad=
\frac{1}{2} \sum_{j \neq k} e_j\cdot e_k\cdot R^\Psi_{e_j,e_k} s + \frac{{\rm i}(n-1)}{2} \nabla\Psi \cdot s - \frac{n(n-1)}{4} \Psi^2 s.\tag*{\qed}
\end{align*} \renewcommand{\qed}{}
\end{proof}

After these preparations, we now complete the proof of Theorem~\ref{spherical_band} for even $n$.
%\begin{proof}[Proof of Theorem~\ref{spherical_band} for $n$ even]
Let $t_0 \in S^1$, $f\colon M \to S^n$, and $E$ be defined as in Corollary~\ref{limiting_spinor_even_dim} and let $\nabla^\Psi$ denote the connection defined in~\eqref{def:nablapsi}.
By Corollary~\ref{limiting_spinor_even_dim}, there exists a section $s \in C^\infty(M,S \otimes E)$ such that $\int_M |s|^2 = 1$ and~${\nabla^\Psi s = 0}$.
Since $M$ is connected and $s$ is $\nabla^\Psi$-parallel, we have that $s \neq 0$ at each point in~$M$. Using \eqref{eq:Weitzenbock} and the fact that $R^\Psi$ annihilates $s$, we find
\begin{align*}
\mathcal{R}^E s
&=
-\frac{1}{4} R s + \frac{{\rm i}(n-1)}{2} \nabla \Psi \cdot s - \frac{n(n-1)}{4} \Psi^2 s \\
&=
-\frac{1}{4} R s + \frac{{\rm i}(n-1)}{2} \psi'(\Theta) \nabla\Theta \cdot s - \frac{n(n-1)}{4} \psi^2(\Theta) s .
\end{align*}
By Lemma~\ref{gradient_of_Theta}, $|\nabla \Theta| \leq 1$. Since $-\psi' > 0$ on $[\theta_-,\theta_+]$, it follows that{\samepage
\begin{align}
\big\langle \mathcal{R}^E s,s \big\rangle
&\leq
-\frac{1}{4} R |s|^2 - \frac{n-1}{2} \psi'(\Theta) |s|^2 - \frac{n(n-1)}{4} \psi^2(\Theta) |s|^2\nonumber \\
&\leq
-\frac{(n-2)(n-1)}{4} \frac{1}{\rho^2(\Theta)} |s|^2. \label{estimate_curvature_E}
\end{align}
In the last step, we have again used \eqref{ineq_for_scal_even_dim}.}

Let us fix an arbitrary point $x \in M$. We consider the singular values $\mu_1, \ldots, \mu_n \geq 0$ of ${\rm d}f_x\colon (T_x M,g) \to (T_{f(x)} S^n,g_{S^n})$, arranged in decreasing order. Since $f$ factors through $S^{n-1}$, the differential ${\rm d}f_x$ has rank at most $n-1$, and we obtain $\mu_n = 0$. The eigenvalues of the symmetric bilinear form $f^* g_{S^n}$ with respect to the metric $g$ are given by $\mu_1^2,\hdots,\mu_{n-1}^2,0$.

Using \eqref{ineq_for_g} together with the inequality $h(\cdot,t_0)^* g_{S^n} \leq g_{S^{n-1}}$, we obtain
\[g \geq \rho^2(\Theta) \varphi^* g_{S^{n-1}} \geq \rho^2(\Theta) f^* g_{S^n}\]
at the point $x$.
Therefore, $\mu_1^2,\hdots,\mu_{n-1}^2 \leq \frac{1}{\rho^2(\Theta)}$.
Using Proposition~\ref{curvature.term} in Appendix~\ref{appendix:Weitzen}, we deduce that
\[
 \big\langle \mathcal{R}^E s ,s \big\rangle \geq -\frac{(n-2)(n-1)}{4} \frac{1}{\rho^2(\Theta)} |s|^2
\]
at the point $x$.
Having established the reverse inequality in \eqref{estimate_curvature_E}, this inequality must be an equality.
In particular all the inequalities in \eqref{estimate_curvature_E} are equalities.
Since $n>2$ and $s \neq 0$ at the point $x$ and $\psi'\neq0$, we can draw the following conclusion:
\begin{itemize}\itemsep=0pt
\item The singular values of ${\rm d}f_x\colon (T_x M,g) \to (T_{f(x)} S^n,g_{S^n})$ are given by $\frac{1}{\rho(\Theta)}, \ldots, \frac{1}{\rho(\Theta)}, 0$. In other words, the eigenvalues of $f^* g_{S^n}$ with respect to $g$ at the point $x$ are given by $\frac{1}{\rho^2(\Theta)}, \ldots, \frac{1}{\rho^2(\Theta)}, 0$.
\item $|\nabla \Theta| = 1$ at the point $x$.
\end{itemize}
Since $|\nabla \Theta| = 1$ at the point $x$, Lemma~\ref{gradient_of_Theta} implies that ${\rm d}\varphi_x(\nabla \Theta) = 0$, hence ${\rm d}f_x(\nabla \Theta) = 0$.
Consequently, $\nabla \Theta$ lies in the nullspace of $f^* g_{S^n}$. Putting these facts together, we obtain
\[f^* g_{S^n} = \frac{1}{\rho^2(\Theta)} (g - {\rm d}\Theta \otimes{\rm d}\Theta),\]
hence
\begin{equation}
\label{a}
g = {\rm d}\Theta \otimes {\rm d}\Theta + \rho^2(\Theta) f^* g_{S^n}
\end{equation}
at the point $x$. On the other hand, using \eqref{ineq_for_g} together with the inequality $h(\cdot,t_0)^* g_{S^n} \leq g_{S^{n-1}}$, we obtain
\begin{equation}
\label{b}
g \geq {\rm d}\Theta \otimes {\rm d}\Theta + \rho^2(\Theta) \varphi^* g_{S^{n-1}} \geq {\rm d}\Theta \otimes {\rm d}\Theta + \rho^2(\Theta) f^* g_{S^n}
\end{equation}
at the point $x$. Combining (\ref{a}) and (\ref{b}), we conclude that
\[g = {\rm d}\Theta \otimes {\rm d}\Theta + \rho^2(\Theta) \varphi^* g_{S^{n-1}}\]
at the point $x$. Since $x$ is arbitrary, we conclude that $g= \Phi^*(g_0)$.
This means that $\Phi$ is a local isometry.
Since the target of $\Phi$ is simply connected and the domain is connected, it follows that~$\Phi$ is a global Riemannian isometry.
The proof of Theorem~\ref{spherical_band} for even $n$ is complete.

\subsection[Proof of Theorem~\ref{spherical_band} for n odd]{Proof of Theorem~\ref{spherical_band} for $\boldsymbol{n}$ odd}

When $n$ is odd, the proof of Theorem~\ref{spherical_band} is simpler, and we just indicate the necessary changes.
Instead of working with $\tilde{M} = M \times S^1$, we work with $M$.
Furthermore, instead of the map $\tilde{f} = h \circ (\varphi \times \id)\colon \tilde{M} \to S^n$, we directly work with $\varphi\colon M \to S^{n-1}$.
Let $S$ denote the spinor bundle over $M$, and let $E_0$ denote the spinor bundle over the round sphere $S^{n-1}$.
Since $n-1$ is even, we may decompose $E_0 = E_0^+ \oplus E_0^-$, where $E_0^+$ and $E_0^-$ denote the $\pm 1$-eigenbundles of the complex volume form.

\begin{Proposition}[cf.\ Cecchini--Zeidler \cite{Cecchini-Zeidler}]
\label{index_odd_dim}
Consider the indices of the following operators:
\begin{itemize}\itemsep=0pt
\item Let $\mathrm{ind}_1$ denote the index of the Dirac operator on $S \otimes \varphi^* E_0^+$ with boundary conditions $s = -{\rm i} \nu \cdot s$ on $\partial_+ M$ and $s = {\rm i} \nu \cdot s$ on $\partial_- M$.
\item Let $\mathrm{ind}_2$ denote the index of the Dirac operator on $S \otimes \varphi^* E_0^+$ with boundary conditions $s = {\rm i} \nu \cdot s$ on $\partial_+ M$ and $s = -{\rm i} \nu \cdot s$ on $\partial_- M$.
\item Let $\mathrm{ind}_3$ denote the index of the Dirac operator on $S \otimes \varphi^* E_0^-$ with boundary conditions $s = -{\rm i} \nu \cdot s$ on $\partial_+ M$ and $s = {\rm i} \nu \cdot s$ on $\partial_- M$.
\item Let $\mathrm{ind}_4$ denote the index of the Dirac operator on $S \otimes \varphi^* E_0^-$ with boundary conditions $s = {\rm i} \nu \cdot s$ on $\partial_+ M$ and $s = -{\rm i} \nu \cdot s$ on $\partial_- M$.
\end{itemize}
Then $\mathrm{ind}_1+\mathrm{ind}_2=0$, $\mathrm{ind}_3+\mathrm{ind}_4=0$, and $\max \{\mathrm{ind}_1,\mathrm{ind}_2, \mathrm{ind}_3, \mathrm{ind}_4\} > 0$.
\end{Proposition}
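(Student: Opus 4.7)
The two identities $\mathrm{ind}_1+\mathrm{ind}_2=0$ and $\mathrm{ind}_3+\mathrm{ind}_4=0$ are immediate: the boundary conditions $s=-{\rm i}\nu\cdot s$ and $s={\rm i}\nu\cdot s$ are formal adjoints of one another, so the two Dirac operators in each pair are mutually adjoint and their indices are negatives of each other.

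For the strict positivity, the plan is to mimic the proof of Proposition~\ref{index_even_dim} verbatim, with the simplification that when $n$ is odd, $\partial_\pm M$ are already even-dimensional; there is no need to cross with $S^1$, and $\varphi\colon M\to S^{n-1}$ plays the role that $\tilde f\colon \tilde M\to S^n$ did in the even case. Suppose for contradiction that all four indices vanish; in particular $\mathrm{ind}_1=\mathrm{ind}_3=0$. Since $n-1$ is even, $S|_{\partial_- M}$ is naturally identified with the spinor bundle of $\partial_- M$ and splits as $S^+\oplus S^-$ into the eigenbundles of ${\rm i}\nu$. By the holographic index theorem (Corollary~\ref{cor:Freed}), the vanishing of $\mathrm{ind}_1$ and $\mathrm{ind}_3$ is equivalent to the vanishing of the indices of the boundary Dirac operators
\[
D_\pm\colon\ C^\infty\bigl(\partial_- M,\,S^+\otimes (\varphi|_{\partial_- M})^*E_0^\pm\bigr)\longrightarrow C^\infty\bigl(\partial_- M,\,S^-\otimes (\varphi|_{\partial_- M})^*E_0^\pm\bigr).
\]

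I would then apply the Atiyah--Singer index theorem on the closed even-dimensional manifold $\partial_- M$. As $(\varphi|_{\partial_- M})^*E_0^\pm$ is pulled back from $S^{n-1}$, its Chern character lies in degrees $0$ and $n-1$ only. The resulting formulas are formally identical to \eqref{index.E_0^+} and \eqref{index.E_0^-} with $\partial_- \tilde M$ and $S^n$ replaced by $\partial_- M$ and $S^{n-1}$:
\[
0=\dim E_0^\pm\cdot\bigl\langle \hat{\mathsf{A}}(\partial_- M),[\partial_- M]\bigr\rangle+\deg(\varphi|_{\partial_- M})\cdot\bigl\langle \mathsf{ch}(E_0^\pm),[S^{n-1}]\bigr\rangle.
\]
Subtracting the two equations and using $\dim E_0^+=\dim E_0^-$ gives
\[
0=\deg(\varphi|_{\partial_- M})\cdot\bigl\langle \mathsf{ch}(E_0^+)-\mathsf{ch}(E_0^-),[S^{n-1}]\bigr\rangle,
\]
and by \cite[Proposition~11.24, Chapter~III]{Lawson-Michelsohn} the latter bracket equals $\pm\chi(S^{n-1})=\pm 2\neq 0$ since $n-1$ is even. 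Hence $\deg(\varphi|_{\partial_- M})=0$, which together with Lemma~\ref{deg_of_restriction} contradicts the hypothesis that $\Phi$ has non-zero degree.

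The main obstacle is bookkeeping rather than substance: one must verify that Corollary~\ref{cor:Freed} applies directly to $M$ (instead of $\tilde M$) and that the orientation and spin-structure conventions on $\partial_- M$ are such that the key topological identity reduces to the Euler-characteristic computation of $E_0^+-E_0^-$ on $S^{n-1}$. Once this matching is in place, the argument is completely parallel to the even-dimensional case.
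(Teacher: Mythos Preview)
Your proposal is correct and follows precisely the route the paper indicates: the paper's proof is the single sentence ``analogous to the proof of Proposition~\ref{index_even_dim} and uses the holographic index theorem,'' and you have unpacked exactly that analogy, replacing $\tilde M$, $\tilde f$, $S^n$, and $\partial_-\tilde M$ by $M$, $\varphi$, $S^{n-1}$, and $\partial_- M$ throughout. The bookkeeping you flag (that Corollary~\ref{cor:Freed} applies to $M$ with the given chirality conditions, and that Lemma~\ref{deg_of_restriction} is dimension-agnostic) goes through without change.
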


The proof of Proposition \ref{index_odd_dim} is analogous to the proof of Proposition \ref{index_even_dim} and uses the holographic index theorem.

After switching the bundles $E_0^+$ and $E_0^-$ if necessary, we may assume that ${\max \{\mathrm{ind}_1,\mathrm{ind}_2\} \!> 0}$.
As above, we focus on the case $\mathrm{ind}_1 > 0$.
(The case $\mathrm{ind}_2 > 0$ can be handled analogously.)

Let $E$ denote the pull-back of $E_0^+$ under $\varphi$. Let $\mathcal{D}^{S \otimes E}$ denote the Dirac operator on sections of $S \otimes E$, and let $\mathcal{D}^{\partial M}$ denote the boundary Dirac operator.

Let $\Psi$ be a smooth function on $M$ that will be specified later. If $s$ is section of the bundle~${S \otimes E}$ and $X$ is a vector field on $M$, we define a perturbed covariant derivative of $s$ by the formula
\[
 P_X s = \nabla_{X}^{S \otimes E}s + \frac{\rm i}{2} \Psi X \cdot s.
\]

\begin{Proposition}
\label{integral_formula_odd_dim}
Let $s \in C^\infty(M,S \otimes E)$.
Then
\begin{align*}
&- \int_M \left|\mathcal{D}^{S \otimes E} s - \frac{{\rm i}n}{2} \Psi s\right|^2 + \int_M |Ps|^2 + \frac{1}{4} \int_M R |s|^2 \\
&+ \int_M \big\langle \mathcal{R}^E s,s \big\rangle + \frac{n(n-1)}{4} \int_M \Psi^2 |s|^2 - \frac{{\rm i}(n-1)}{2} \int_M \langle (\nabla \Psi) \cdot s,s \rangle \\
&\qquad= \frac{1}{2} \int_{\partial_+ M} \big\langle \mathcal{D}^{\partial M} s,s + {\rm i} \nu \cdot s \big\rangle + \frac{1}{2} \int_{\partial_+ M} \big\langle s + {\rm i} \nu \cdot s,\mathcal{D}^{\partial M} s \big\rangle \\
&\phantom{\qquad=}{} - \frac{1}{2} \int_{\partial_+ M} (H-(n-1)\Psi) |s|^2 - \frac{n-1}{2} \int_{\partial_+ M} \Psi \langle s + {\rm i} \nu \cdot s,s \rangle \\
&\phantom{\qquad=}{} + \frac{1}{2} \int_{\partial_- M} \big\langle \mathcal{D}^{\partial M} s,s - {\rm i} \nu \cdot s \big\rangle + \frac{1}{2} \int_{\partial_- M} \big\langle s - {\rm i} \nu \cdot s,\mathcal{D}^{\partial M} s \big\rangle \\
&\phantom{\qquad=}{} - \frac{1}{2} \int_{\partial_- M} (H+(n-1)\Psi) |s|^2 + \frac{n-1}{2} \int_{\partial_- M} \Psi \langle s - {\rm i} \nu \cdot s,s \rangle.
\end{align*}
\end{Proposition}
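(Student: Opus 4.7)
The plan is to mimic the proof of Proposition~\ref{integral_formula_even_dim} verbatim, with the simplification that the $S^1$-factor and the parallel vector field $T$ are absent. Hence every place where a $|\nabla_T^{\tilde S\otimes\tilde E} u|^2$ term, a $T$-divergence term, or the subtraction $X-\langle X,T\rangle T$ appeared, one simply drops the corresponding contribution.

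First I would integrate the Schr\"odinger--Lichnerowicz--Weitzenb\"ock formula $(\mathcal{D}^{S\otimes E})^2 s = (\nabla^{S\otimes E})^*\nabla^{S\otimes E}s+\tfrac14 Rs+\mathcal{R}^E s$ over $M$ and apply the divergence theorem, using the identity
\[
\nu\cdot\mathcal{D}^{S\otimes E}s+\nabla^{S\otimes E}_\nu s=\mathcal{D}^{\partial M}s-\tfrac12 Hs,
\]
to convert the boundary piece into $\int_{\partial M}\langle\mathcal{D}^{\partial M}s,s\rangle-\tfrac12\int_{\partial M}H|s|^2$. Using that $\mathcal{D}^{\partial M}$ is self-adjoint and anti-commutes with Clifford multiplication by $\nu$, I symmetrize as in the even-dimensional case to rewrite each boundary integrand in the form $\tfrac12\langle\mathcal{D}^{\partial M}s,s\pm {\rm i}\nu\cdot s\rangle+\tfrac12\langle s\pm {\rm i}\nu\cdot s,\mathcal{D}^{\partial M}s\rangle$ on $\partial_\pm M$.

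Next I would expand $|Ps|^2$ pointwise using a local orthonormal frame $e_1,\dots,e_n$ on $M$. Since there is no distinguished $T$-direction to peel off, one gets directly
\[
|Ps|^2=|\nabla^{S\otimes E}s|^2+\tfrac{n}{4}\Psi^2|s|^2+\tfrac{\rm i}{2}\Psi\langle\mathcal{D}^{S\otimes E}s,s\rangle-\tfrac{\rm i}{2}\Psi\langle s,\mathcal{D}^{S\otimes E}s\rangle,
\]
which after integration over $M$ eliminates the $|\nabla^{S\otimes E}s|^2$ term from the Weitzenb\"ock identity in favor of $\int_M|Ps|^2$. Substituting this in produces an identity with the leading $-\int_M|\mathcal{D}^{S\otimes E}s|^2$ on the left.

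Finally, I would handle the squared operator $|\mathcal{D}^{S\otimes E}s-\tfrac{{\rm i}n}{2}\Psi s|^2$ by expanding it and noting that the cross terms carry the $\tfrac{{\rm i}n}{2}\Psi\langle\mathcal{D}^{S\otimes E}s,s\rangle$ contributions needed to upgrade the $\tfrac12$ in front of them to $\tfrac{n}{2}$. To produce the desired $-\tfrac{{\rm i}(n-1)}{2}\int_M\langle(\nabla\Psi)\cdot s,s\rangle$ interior term and the $\pm\tfrac{n-1}{2}\int_{\partial_\pm M}\Psi\langle s\pm {\rm i}\nu\cdot s,s\rangle$ boundary terms, I would apply the divergence-theorem identity
\[
-\tfrac{{\rm i}(n-1)}{2}\int_M\Psi\langle\mathcal{D}^{S\otimes E}s,s\rangle+\tfrac{{\rm i}(n-1)}{2}\int_M\Psi\langle s,\mathcal{D}^{S\otimes E}s\rangle-\tfrac{{\rm i}(n-1)}{2}\int_M\langle(\nabla\Psi)\cdot s,s\rangle=-\tfrac{{\rm i}(n-1)}{2}\int_{\partial M}\Psi\langle\nu\cdot s,s\rangle,
\]
exactly as in \eqref{eq:formula3}, and distribute the boundary term over $\partial_\pm M$ using $\nu\cdot s=\pm {\rm i} s$ boundary identities only implicitly (the formula must hold for every $s$, so no boundary condition is imposed here). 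Collecting all contributions and grouping terms by $\partial_\pm M$ yields the stated identity. There is no essential obstacle: the only care required is bookkeeping of the $\pm$ signs and combining the $-\tfrac{n}{4}\Psi^2$ term from $|Ps|^2$ with the $\tfrac{n^2}{4}\Psi^2$ from the square $|\mathcal{D}^{S\otimes E}s-\tfrac{{\rm i}n}{2}\Psi s|^2$ to produce the stated $\tfrac{n(n-1)}{4}\int_M\Psi^2|s|^2$.
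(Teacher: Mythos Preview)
Your proposal is correct and follows essentially the same approach as the paper, which simply states that the proof is analogous to that of Proposition~\ref{integral_formula_even_dim}. You have correctly identified that the only modification is the removal of all $T$-related contributions (the $|\nabla_T u|^2$ term, the $T$-divergence identity \eqref{eq:lastterm}, and the projection $X-\langle X,T\rangle T$), and your bookkeeping of the $\Psi^2$ coefficients and of the algebraic rewriting of the $-\tfrac{{\rm i}(n-1)}{2}\int_{\partial M}\Psi\langle\nu\cdot s,s\rangle$ boundary term into the $\partial_\pm M$ pieces is accurate.
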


The proof of Proposition~\ref{integral_formula_odd_dim} is analogous to Proposition~\ref{integral_formula_even_dim}.

As above, we define a function $\psi\colon [\theta_-,\theta_+] \to \R$ by $\psi(\theta) = \frac{\rho'(\theta)}{\rho(\theta)}$. The assumption $R \geq R_{g_0} \circ \Phi$ gives
\begin{equation} \label{ineq_for_scal_odd_dim}
R \geq (n-1) \left( -2 \psi'(\Theta) - n \psi^2(\Theta) + \frac{n-2}{\rho^2(\Theta)} \right).
\end{equation}
We define $\Psi\colon M \to \R$ by $\Psi = \psi \circ \Theta$.

At this point, we use the assumption that $\mathrm{ind}_1 > 0$. In view of the deformation invariance of the index, we find a section $s$ of $S \otimes E$ such that
\begin{itemize}\itemsep=0pt
 \item $s$ does not vanish identically,
 \item $D^{S \otimes E } s - \frac{{\rm i}n}{2} \psi s = 0$ on $M$,
 \item $s = -{\rm i} \nu \cdot s$ on $\partial_+ M$ and $s = {\rm i} \nu \cdot s$ on $\partial_- M$.
\end{itemize}
Using Proposition~\ref{integral_formula_odd_dim}, we obtain
\begin{align}
&\int_M |P s|^2 + \frac{1}{4} \int_M R |s|^2
+ \int_M \big\langle \mathcal{R}^E s,s \big\rangle + \frac{n(n-1)}{4} \int_M \Psi^2 |s|^2 - \frac{{\rm i}(n-1)}{2} \int_M \langle (\nabla \Psi) \cdot s,s \rangle\nonumber \\
&\qquad= -\frac{1}{2} \int_{\partial_+ M} (H-(n-1)\Psi) |s|^2 - \frac{1}{2} \int_{\partial_- M} (H+(n-1)\Psi) |s|^2. \label{important_estimate_odd_dim}
\end{align}
By assumption, $H-(n-1)\Psi \geq 0$ on $\partial_+ M$ and $H +(n-1)\Psi \geq 0$ on $\partial_- M$. Therefore, the right-hand side in \eqref{important_estimate_odd_dim} is non-positive.

Fix a point $x \in M $, and let $ \mu_1,\hdots,\mu_n \geq 0$ denote the singular values of the differential ${\rm d}\varphi_x\colon (T_x M,g ) \to \big(T_{f(x)} S^{n-1} ,g_{S^{n-1}}\big)$, arranged in decreasing order. Since the differential ${\rm d}\varphi_x$ has rank at most $n-1$, it follows that $\mu_n = 0$.

Since $\varphi$ is $1$-Lipschitz by assumption, we obtain
\[
 g \geq \rho^2(\Theta) \varphi^* g_{S^{n-1}} .
 \]
Consequently, $\mu_1^2,\hdots,\mu_{n-1}^2 \leq \frac{1}{\rho^2(\Theta)}$. In view of Proposition~\ref{curvature.term}, this implies
\begin{align*}
\big\langle \mathcal{R}^{E} s,s \big\rangle
&\geq -\frac{1}{4} \sum_{\substack{1 \leq j,k \leq n\\ j \neq k}} \mu_j \mu_k |s|^2 \geq -\frac{(n-2)(n-1)}{4} \frac{1}{\rho^2(\Theta)} |s|^2.
\end{align*}
Since $|\nabla \Theta| \leq 1$ and $|\nabla \Psi| \leq -\psi'(\Theta)$, this gives the pointwise estimate
\begin{align*}
&\frac{1}{4} R |s|^2 + \big\langle \mathcal{R}^{E} s,s \big\rangle + \frac{n(n-1)}{4} \Psi^2 |s|^2 - \frac{n-1}{2} |\nabla \Psi| |s|^2 \\
&\qquad\geq \frac{1}{4} R |s|^2 - \frac{(n-2)(n-1)}{4} \frac{1}{\rho^2(\Theta)} |s|^2
+ \frac{n(n-1)}{4} \psi^2(\Theta) |s|^2 + \frac{n-1}{2} \psi'(\Theta) |s|^2 \geq 0.
\end{align*}
In the last step, we have used \eqref{ineq_for_scal_odd_dim}.
Putting these facts together, we conclude that
\[
\int_M | P s|^2 = 0,
\]
hence
\[
 \nabla_X^{S \otimes E} s + \frac{\rm i}{2} \psi(\Theta) X \cdot s = 0
\]
for every vector field $X$.
This is the analogue of Corollary~\ref{limiting_spinor_even_dim}.
From here on, the proof of Theorem~\ref{spherical_band} in the odd-dimensional case proceeds in the same way as in the even-dimensional case.

\section{Proof of Theorem~\ref{punctures}}
\label{proof_punctures}

\subsection[Proof of Theorem~\ref{punctures} for n even]{Proof of Theorem~\ref{punctures} for $\boldsymbol{n}$ even}

We first prove Theorem~\ref{punctures} for even $n$.
The necessary adaptations in the odd-dimensional case will be explained at the end of this section.

Fix an even integer $n>2$.
Consider the warped product metric $g_0 = {\rm d}\theta \otimes {\rm d}\theta + \sin^2 \theta g_{S^{n-1}}$ on $S^{n-1} \times (0,\pi)$.
Let $\Omega$ be a non-compact, connected spin manifold of dimension $n$ without boundary.
Let $g$ be a (possibly incomplete) Riemannian metric on $\Omega$ with scalar curvature~${R \geq n(n-1)}$.
Suppose that $\Phi\colon (\Omega,g) \to \big(S^{n-1} \times (0,\pi),g_0\big)$ is a smooth map satisfying the assumptions of Theorem~\ref{punctures}.

Let $\varphi\colon \Omega \to S^{n-1}$ denote the projection of $\Phi$ to the first factor, and let $\Theta\colon \Omega \to (0,\pi)$ denote the projection of $\Phi$ to the second factor. Since $\Phi$ is proper, it follows that $\Theta$ is proper. Since~${\Phi = (\varphi,\Theta)}$ is $1$-Lipschitz, we obtain
\begin{equation} \label{ineq_for_g_2}
 g \geq {\rm d}\Theta \otimes {\rm d}\Theta + \sin^2 \Theta \varphi^* g_{S^{n-1}}.
\end{equation}
As in Lemma \ref{gradient_of_Theta}, \eqref{ineq_for_g_2} implies that $|\nabla \Theta| \leq 1$, and the inequality is strict unless ${\rm d}\varphi(\nabla \Theta) = 0$.

Throughout this section, we fix a point $z \in S^{n-1} \times \big[\frac{\pi}{3},\frac{2\pi}{3}\big]$ with the property that $z$ is a~regular value of $\Phi$. Since $\Phi$ is proper, $\Phi^{-1}(\{z\})$ is a finite subset of $\Omega$.
Let us fix a real number $\delta_0 \in \big(0,\frac{\pi}{4}\big)$ with the property that the set $\Phi^{-1}(\{z\})$ is contained in a single connected component of the set $\Theta^{-1}([\delta_0,\pi-\delta_0])$.

\begin{Definition}
We denote by $\Delta$ the set of all real numbers $\delta \in (0,\delta_0)$ such that $\delta$ and $\pi-\delta$ are regular values of the function $\Theta\colon \Omega \to (0,\pi)$.
\end{Definition}

By Sard's theorem, $\Delta$ is an open and dense subset of $(0,\delta_0)$.
In the following, we assume that $\delta \in \Delta$. Since $\Theta$ is proper and $\delta$ and $\pi-\delta$ are regular values of $\Theta$, the set $\Theta^{-1}([\delta,\pi-\delta])$ is a compact domain in $\Omega$ with smooth boundary. We denote by $M_\delta$ the connected component of~$\Theta^{-1}([\delta,\pi-\delta])$ that contains the set $\Phi^{-1}(\{z\})$.
Then $M_\delta$ is a compact, connected manifold with boundary.
As above, we may write $\partial M_\delta = \partial_+ M_\delta \cup \partial_- M_\delta$, where
\[\partial_+ M_\delta := \partial M_\delta \cap \Theta^{-1}(\{\pi-\delta\}), \qquad \partial_- M_\delta := \partial M_\delta \cap \Theta^{-1}(\{\delta\}).\]
We first show that the restriction $\Phi|_{M_\delta}\colon M_\delta \to S^{n-1} \times [\delta,\pi-\delta]$ has non-zero degree.

\begin{Lemma}
\label{degree_of_restriction_2}
For each $\delta \in \Delta$, the restriction $\Phi|_{M_\delta}\colon M_\delta \to S^{n-1} \times [\delta,\pi-\delta]$ has the same degree as the map $\Phi\colon \Omega \to S^{n-1} \times (0,\pi)$.
\end{Lemma}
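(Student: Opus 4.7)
The plan is to compute both degrees as the signed count of preimages at the fixed regular value $z$, and to verify that these preimages are the same for $\Phi$ and for the restriction $\Phi|_{M_\delta}$.

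First, I would check that $z$ lies in the interior of the target of $\Phi|_{M_\delta}$ and remains a regular value for the restriction. Since $z \in S^{n-1} \times [\pi/3, 2\pi/3]$ and $\delta < \delta_0 < \pi/4$, we have $z \in S^{n-1} \times (\delta, \pi-\delta)$. Regularity of $z$ for $\Phi$ immediately gives regularity for $\Phi|_{M_\delta}$, and moreover no preimage of $z$ can lie on $\partial M_\delta \subset \Theta^{-1}(\{\delta, \pi-\delta\})$ because $\Theta = \pi/3$ or $2\pi/3$ at the preimages of $z$.

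Next comes the key inclusion $\Phi^{-1}(\{z\}) \subset M_\delta$. By the choice of $\delta_0$, the finite set $\Phi^{-1}(\{z\})$ is contained in a single connected component $C$ of $\Theta^{-1}([\delta_0, \pi-\delta_0])$. Since $\delta < \delta_0$, we have the inclusion $C \subset \Theta^{-1}([\delta, \pi-\delta])$. Because $C$ is connected, it is contained in a unique connected component of the larger set $\Theta^{-1}([\delta, \pi-\delta])$, and this component must be $M_\delta$ by the very definition of $M_\delta$ as the connected component of $\Theta^{-1}([\delta, \pi-\delta])$ containing $\Phi^{-1}(\{z\})$. Thus $(\Phi|_{M_\delta})^{-1}(\{z\}) = \Phi^{-1}(\{z\}) \cap M_\delta = \Phi^{-1}(\{z\})$.

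Finally, since $\Phi$ is proper, its degree equals $\sum_{p \in \Phi^{-1}(\{z\})} \mathrm{sgn}(\det {\rm d}\Phi_p)$. The restriction $\Phi|_{M_\delta}$ is a smooth map of compact oriented $n$-manifolds with boundary, sending $\partial M_\delta$ into $S^{n-1} \times \{\delta, \pi-\delta\} = \partial(S^{n-1} \times [\delta, \pi-\delta])$, so its degree is likewise computed as the signed count of preimages at the interior regular value $z$. Since the two preimage sets coincide with matching orientation signs (the local differentials are literally the same), we conclude $\deg(\Phi|_{M_\delta}) = \deg(\Phi)$. The only point requiring care is the connected-component bookkeeping above; no genuine analytic obstacle arises.
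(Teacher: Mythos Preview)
Your proof is correct and follows the same approach as the paper's: both compute the degree as the signed count of preimages at the fixed regular value $z$ and use that $\Phi^{-1}(\{z\}) \subset M_\delta$ by the very definition of $M_\delta$. Your write-up is more detailed than the paper's two-sentence argument (in particular you spell out why the definition of $M_\delta$ is well-posed via the connected-component bookkeeping), with one harmless slip: at preimages of $z$ you have $\Theta \in [\pi/3, 2\pi/3]$, not necessarily $\Theta = \pi/3$ or $2\pi/3$, but the conclusion that no preimage lies on $\partial M_\delta$ is unaffected.
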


\begin{proof}
The degree of $\Phi$ is obtained by counting the elements of the set $\Phi^{-1}(\{z\})$ with suitable signs.
Since $\Phi^{-1}(\{z\})$ is contained in $M_\delta$, the degree of $\Phi|_{M_\delta}$ coincides with the degree of $\Phi$.
\end{proof}

We consider the product $\tilde{\Omega} = \Omega \times S^1$ equipped with the product metric $\tilde{g} = g + r^2 g_{S^1}$.
Let~$\tilde{M}_\delta = M_\delta \times S^1 \subset \tilde{\Omega}$.
We write $\partial \tilde{M}_\delta = \partial_+ \tilde{M}_\delta \cup \partial_- \tilde{M}_\delta$, where
\[\partial_+ \tilde{M}_\delta := \partial_+ M_\delta \times S^1, \qquad \partial_- \tilde{M}_\delta := \partial_- M_\delta \times S^1.\]
By Lemma~\ref{map_to_Sn}, we can find a smooth map $h\colon S^{n-1} \times S^1 \to S^n$ of degree $\pm 1$ such that $h^* g_{S^n} \leq g_{S^{n-1}} + 4 g_{S^1}$.
We define a smooth map $\tilde{f}\colon \tilde{\Omega} = \Omega \times S^1 \to S^n$ by
$\tilde{f}(x,t) = h(\varphi(x),t)$
for $x \in \Omega$ and $t \in S^1$.

Choose a spin structure on $\Omega$ and let $S$ denote the spinor bundle over $\Omega$. Let $\tilde{S}$ denote the spinor bundle over the product $\tilde{\Omega}$.
Note that $\tilde{S}$ is the pull-back of $S$ under the canonical projection from $\tilde{\Omega} = \Omega \times S^1$ to $\Omega$.

Let $E_0$ denote the spinor bundle of the round sphere $S^n$.
The bundle $E_0$ is equipped with a~natural bundle metric and connection.
Since $n$ is even, we may decompose $E_0$ in the usual way as $E_0 = E_0^+ \oplus E_0^-$, where $E_0^+$ and $E_0^-$ are the eigenbundles of the complex volume form.

For each $\delta \in \Delta$, we define $\mathrm{ind}_1$, $\mathrm{ind}_2$, $\mathrm{ind}_3$, $\mathrm{ind}_4$ as in Proposition~\ref{index_even_dim}, working on $\tilde{M}_\delta$ instead of $\tilde{M}$.
Let
\begin{gather*}
\begin{split}
&\Delta_1 = \{\delta \in \Delta\colon \mathrm{ind}_1 > 0\}, \qquad
\Delta_2 = \{\delta \in \Delta\colon \mathrm{ind}_2 > 0\}, \\
& \Delta_3 = \{\delta \in \Delta\colon \mathrm{ind}_3 > 0\}, \qquad
\Delta_4 = \{\delta \in \Delta\colon \mathrm{ind}_4 > 0\}.
\end{split}
\end{gather*}
By Proposition~\ref{index_even_dim}, we know that
\[
\Delta = \Delta_1 \cup \Delta_2 \cup \Delta_3 \cup \Delta_4.
\]
In particular, at least one of the sets $\Delta_1$, $\Delta_2$, $\Delta_3$, $\Delta_4$ must contain $0$ in its closure.
After switching the bundles $E_0^+$ and $E_0^-$ if necessary, we may assume that one of the sets $\Delta_1$, $\Delta_2$ must contain $0$ in its closure.
In the remainder of this section, we assume that the set $\Delta_1$ contains $0$ in its closure.
(The case when the set $\Delta_2$ contains $0$ in its closure can be handled analogously.)

In the following, we consider a real number $\delta \in \Delta_1$. In other words, $\mathrm{ind}_1$ is positive.
Let~$\tilde{E}$ denote the pull-back of $E_0^+$ under the map $\tilde{f}$.
The bundle metric on $E_0^+$ gives us a bundle metric on $\tilde{E}$.
Moreover, the connection on $E_0^+$ induces a connection on $\tilde{E}$.
As above, we denote by~\smash{$\nabla^{\tilde{S} \otimes \tilde{E}}$} the connection on $\tilde{S} \otimes \tilde{E}$. We denote by~\smash{$\mathcal{D}^{\tilde{S} \otimes \tilde{E}}$} the Dirac operator acting on sections of $\tilde{S} \otimes \tilde{E}$.

\begin{figure}[t]\centering
%\begin{overpic}[scale=.75,grid,tics=10]{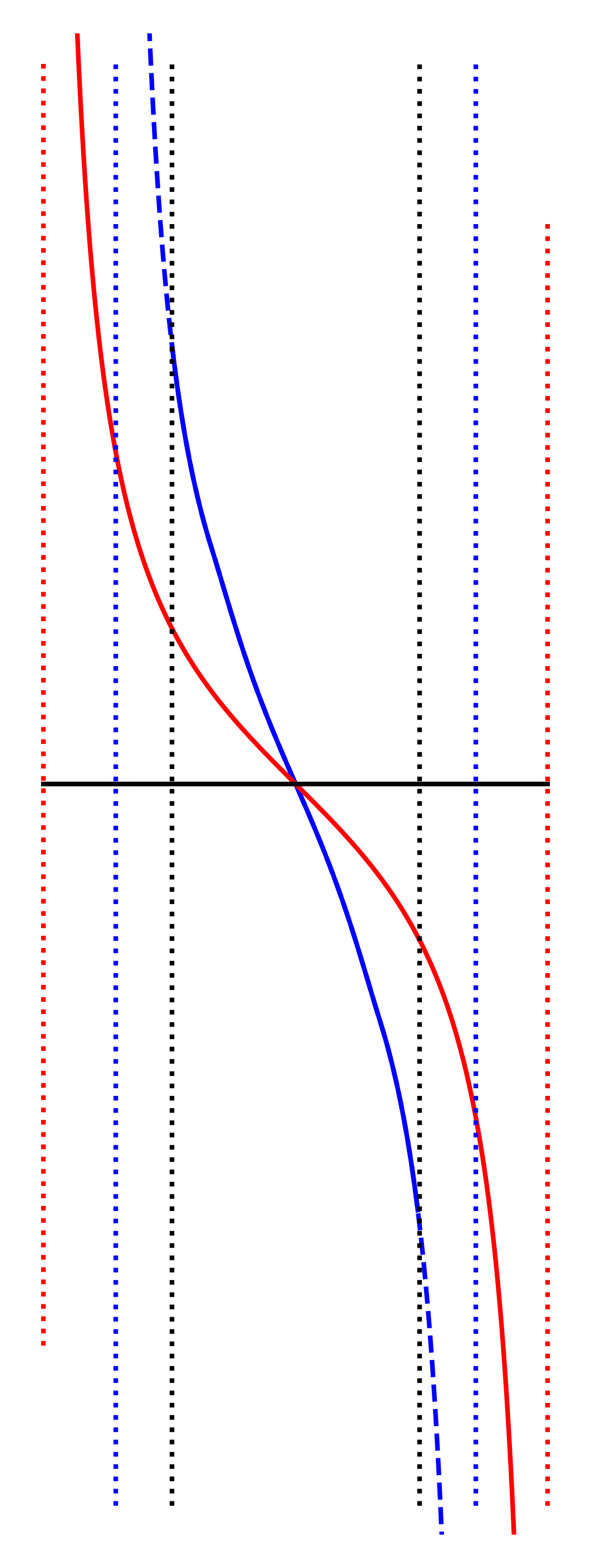}
\begin{overpic}[scale=.75]{Bild.png}
\put(-4,5){$\textcolor{blue}{\delta-\varepsilon}$}
\put(12,5){$\delta$}
\put(32,90){$\textcolor{blue}{\pi-\delta+\varepsilon}$}
\put(15.5,90){$\pi-\delta$}
\put(-1,14){$\textcolor{red}{0}$}
\put(37,83){$\textcolor{red}{\pi}$}
\put(24,45){$\textcolor{red}{\cot}$}
\put(13,70){$\textcolor{blue}{\psi_{\delta,\varepsilon}}$}
\end{overpic}
\caption{The function $\psi_{\delta,\varepsilon}$.}\label{Fig1}
\end{figure}

Given $\delta \in \big(0,\frac{\pi}{4}\big)$ and $\varepsilon \in (0,\delta)$, we can find a smooth function $\psi_{\delta,\varepsilon}\colon [\delta,\pi-\delta] \to \R$ with the following properties:
\begin{itemize}\itemsep=0pt
\item $\psi_{\delta,\varepsilon}(\theta) = \cot(\theta-\delta+\varepsilon)$ for $\theta \in \big[\delta,\frac{\pi}{3}\big]$,
\item $\psi_{\delta,\varepsilon}(\theta) = \cot(\theta+\delta-\varepsilon)$ for $\theta \in \big[\frac{2\pi}{3},\pi-\delta\big]$,
\item $|\psi_{\delta,\varepsilon}(\theta) - \cot \theta| \leq K\delta$ for $\theta \in \big[\frac{\pi}{3},\frac{2\pi}{3}\big]$,
\item $\big|\frac{{\rm d}}{{\rm d}\theta}(\psi_{\delta,\varepsilon}(\theta) - \cot \theta)\big| \leq K\delta$ for $\theta \in \big[\frac{\pi}{3},\frac{2\pi}{3}\big]$,
\end{itemize}
see Figure~\ref{Fig1}. Here $K$ is a positive constant that does not depend on $\delta$ and $\varepsilon$.
This choice of the function $\psi_{\delta,\varepsilon}$ is inspired in part by the work of Hirsch, Kazaras, Khuri, and Zhang \cite{Hirsch-Kazaras-Khuri-Zhang}.

Similarly as in Section~\ref{proof_of_band_comparison}, we define a vector field $T$ on $\tilde{M}_\delta = M_\delta \times S^1$ by $T = \frac{1}{r} \frac{\partial}{\partial t}$, where~${t \mapsto (\cos t,\sin t)}$ is the canonical local coordinate on $S^1$.
We define a function ${\Psi_{\delta,\varepsilon}\colon M_\delta\! \to \R}$ by $\Psi_{\delta,\varepsilon} = \psi_{\delta,\varepsilon} \circ \Theta$ and extend $\Psi_{\delta,\varepsilon}$ to a smooth function on $\tilde{M}_\delta$ satisfying $T(\Psi_{\delta,\varepsilon}) = 0$.
Finally, we define an operator $\tilde{P}$ as in \eqref{def:tildeP}, working with $\Psi_{\delta, \varepsilon}$ instead of $\Psi$.

\begin{Proposition}
\label{existence_of_spinor_even_dim_2}
Suppose that $\delta \in \Delta_1$ and suppose that $\varepsilon \in (0,\delta)$ is chosen so that $H \geq -(n-1) \cot \varepsilon$ at each point on the boundary $\partial M_\delta$. Moreover, we assume that $r > 2$. Then we can find an element $t_0 \in S^1$ and a section $u \in C^\infty\big(\tilde{M}_\delta,\tilde{S} \otimes \tilde{E}\big)$ such that
\[\frac{2}{K\delta r} \int_{M_\delta \times \{t_0\}} \frac{1}{\sin \Theta} |u|^2 + \int_{M_\delta \times \{t_0\}} 1_{\{\theta \in [\frac{\pi}{3},\frac{2\pi}{3}]\}} |u|^2 = 1\]
and
\[\int_{M_\delta \times \{t_0\}} \big|\tilde{P} u\big|^2 \leq \frac{n-1}{2} K\delta.\]
\end{Proposition}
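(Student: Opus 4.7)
The argument parallels the proof of Proposition~\ref{existence_of_spinor_even_dim}. First, since $\mathrm{ind}_1>0$, deformation invariance of the index produces a nontrivial section $u\in C^\infty\big(\tilde M_\delta,\tilde S\otimes\tilde E\big)$ satisfying $\mathcal D^{\tilde S\otimes\tilde E}u=\frac{{\rm i}n}{2}\Psi_{\delta,\varepsilon}u$ together with the boundary conditions $u=-{\rm i}\nu\cdot u$ on $\partial_+\tilde M_\delta$ and $u={\rm i}\nu\cdot u$ on $\partial_-\tilde M_\delta$. Applying Proposition~\ref{integral_formula_even_dim} (whose proof is local and transfers to $\tilde M_\delta$ unchanged) with $\Psi=\Psi_{\delta,\varepsilon}$ kills the Dirac-equation term; moreover the projections $u\pm{\rm i}\nu\cdot u$ vanish on the respective boundary components, so the boundary side of the formula reduces to $-\frac12\int_{\partial_+\tilde M_\delta}(H-(n-1)\Psi_{\delta,\varepsilon})|u|^2-\frac12\int_{\partial_-\tilde M_\delta}(H+(n-1)\Psi_{\delta,\varepsilon})|u|^2$. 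From the first two defining properties of $\psi_{\delta,\varepsilon}$ one has $\psi_{\delta,\varepsilon}(\delta)=\cot\varepsilon$ and $\psi_{\delta,\varepsilon}(\pi-\delta)=-\cot\varepsilon$, so both integrands collapse to $H+(n-1)\cot\varepsilon$, which is non-negative by hypothesis. The boundary contribution is therefore $\leq 0$.

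Next, I would estimate the interior integrand pointwise as in Proposition~\ref{existence_of_spinor_even_dim}. Since $r>2\geq 2\sin\Theta$ holds everywhere on $\tilde M_\delta$, the min-max characterisation of the singular values of ${\rm d}\tilde f$ gives $\mu_1^2,\ldots,\mu_{n-1}^2\leq 1/\sin^2\Theta$ and $\mu_n^2\leq 4/r^2$, so Proposition~\ref{curvature.term} yields $\big\langle\mathcal R^{\tilde E}u,u\big\rangle\geq-\frac{(n-2)(n-1)}{4\sin^2\Theta}|u|^2-\frac{n-1}{r\sin\Theta}|u|^2$. Combining this with $R\geq n(n-1)$, with $|\nabla\Psi_{\delta,\varepsilon}|\leq|\psi_{\delta,\varepsilon}'(\Theta)|$ (from $|\nabla\Theta|\leq 1$), and with the pointwise bound $-\frac{{\rm i}(n-1)}{2}\langle\nabla\Psi_{\delta,\varepsilon}\cdot u,u\rangle\geq-\frac{n-1}{2}|\nabla\Psi_{\delta,\varepsilon}||u|^2$, the integrand on the left-hand side of Proposition~\ref{integral_formula_even_dim} is bounded below by $A(\Theta)|u|^2-\frac{n-1}{r\sin\Theta}|u|^2$, where
\[
A(\theta) := \frac{n(n-1)}{4}\bigl(1+\psi_{\delta,\varepsilon}(\theta)^2\bigr)+\frac{n-1}{2}\psi_{\delta,\varepsilon}'(\theta)-\frac{(n-2)(n-1)}{4\sin^2\theta}.
\]

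The crux of the argument, and the main obstacle, is analysing $A(\theta)$ on the three pieces of $\psi_{\delta,\varepsilon}$'s definition. On $[\delta,\pi/3]$ (respectively $[2\pi/3,\pi-\delta]$) the exact identity $-\psi_{\delta,\varepsilon}'=1+\psi_{\delta,\varepsilon}^2$ holds, because $\psi_{\delta,\varepsilon}(\theta)=\cot(\theta-\delta+\varepsilon)$ (respectively $\cot(\theta+\delta-\varepsilon)$), and a short computation collapses $A$ to $\frac{(n-1)(n-2)}{4}\bigl[\sin^{-2}(\theta-\delta+\varepsilon)-\sin^{-2}\theta\bigr]\geq 0$ (resp.\ the analogous expression), non-negative because $\varepsilon<\delta$ and the shifts keep both arguments on a single monotonicity branch of $\sin$. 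On the middle region $[\pi/3,2\pi/3]$ the closeness estimates $|\psi_{\delta,\varepsilon}-\cot|\leq K\delta$ and $|\psi_{\delta,\varepsilon}'+\csc^2|\leq K\delta$, together with the identity $A\equiv 0$ when $\psi=\cot$ (which simply encodes the scalar curvature of the round sphere), give $|A(\theta)|\leq C_n K\delta$ for an $n$-dependent constant $C_n$. This loss of exactness on the middle region is precisely what forces a $K\delta$-error term into the final bound.

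Putting everything together, the integral formula, the non-positivity of the boundary contribution, and the pointwise lower bound yield
\[
\int_{\tilde M_\delta}\big|\tilde Pu\big|^2 \leq \frac{n-1}{r}\int_{\tilde M_\delta}\frac{|u|^2}{\sin\Theta}+C_n K\delta\int_{\tilde M_\delta}1_{\{\Theta\in[\pi/3,2\pi/3]\}}|u|^2.
\]
Since all weights on the right depend only on the $M_\delta$-factor, Fubini together with a mean-value argument on $S^1$ produces a $t_0\in S^1$ on whose slice $M_\delta\times\{t_0\}$ the same inequality persists. A final rescaling of $u$ to enforce $\frac{2}{Kr\delta}\int\frac{|u|^2}{\sin\Theta}+\int 1_{\{\Theta\in[\pi/3,2\pi/3]\}}|u|^2=1$ then delivers $\int_{M_\delta\times\{t_0\}}|\tilde Pu|^2\leq\frac{n-1}{2}K\delta$, after enlarging $K$ from the outset so as to absorb the dimensional constant $C_n$.
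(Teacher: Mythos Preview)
Your proposal is correct and follows essentially the same route as the paper: produce $u$ from $\mathrm{ind}_1>0$, feed it into the integral formula with $\Psi=\Psi_{\delta,\varepsilon}$, show the boundary terms are non-positive via $\psi_{\delta,\varepsilon}(\delta)=\cot\varepsilon$, $\psi_{\delta,\varepsilon}(\pi-\delta)=-\cot\varepsilon$, estimate $\mathcal R^{\tilde E}$ using $r>2\sin\Theta$, and then analyse the scalar integrand region by region, getting exact non-negativity on the two outer intervals and an $O(K\delta)$ defect on $[\pi/3,2\pi/3]$ before slicing in $S^1$ and normalising. The only cosmetic difference is that the paper tracks the middle-region error as precisely $\tfrac{n-1}{2}K\delta$ (coming solely from the $|\psi_{\delta,\varepsilon}'|$ bound), whereas you allow a generic $C_nK\delta$ and absorb $C_n$ into $K$; both are fine. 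One small point: your formula for $A$ writes $+\tfrac{n-1}{2}\psi_{\delta,\varepsilon}'$ rather than $-\tfrac{n-1}{2}|\psi_{\delta,\varepsilon}'|$, which tacitly uses $\psi_{\delta,\varepsilon}'\le 0$; this holds on the outer intervals and, on the middle interval, the discrepancy is in any case $O(K\delta)$, so your conclusion is unaffected.
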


\begin{proof}
By assumption, $\delta \in \Delta_1$, so that $\text{ \rm ind}_1 > 0$. In view of the deformation invariance of the index, we can find a section $u \in C^\infty\big(\tilde{M}_\delta,\tilde{S} \otimes \tilde{E}\big)$ such that
\begin{itemize}\itemsep=0pt
\item $u$ does not vanish identically,
\item $\mathcal{D}^{\tilde{S} \otimes \tilde{E}} u - \frac{{\rm i}n}{2} \Psi_{\delta,\varepsilon} u = 0$ on $\tilde{M}_\delta$,
\item $u = -{\rm i} \nu \cdot u$ on $\partial_+ \tilde{M}_\delta$ and $u = {\rm i} \nu \cdot u$ on $\partial_- \tilde{M}_\delta$.
\end{itemize}
As above, it follows from Proposition~\ref{integral_formula_even_dim} that
\begin{align}
&\int_{\tilde{M}_\delta} \big|\tilde{P} u\big|^2 + \int_{\tilde{M}_\delta} \big|\nabla_T^{\tilde{S} \otimes \tilde{E}} u\big|^2 + \frac{1}{4} \int_{\tilde{M}_\delta} R |u|^2 \nonumber\\
&\qquad+ \int_{\tilde{M}_\delta} \big\langle \mathcal{R}^{\tilde{E}} u,u \big\rangle + \frac{n(n-1)}{4} \int_{\tilde{M}_\delta} \Psi_{\delta,\varepsilon}^2 |u|^2 - \frac{{\rm i}(n-1)}{2} \int_{\tilde{M}_\delta} \langle (\nabla \Psi_{\delta,\varepsilon}) \cdot u,u \rangle\nonumber \\
&\phantom{\qquad+}{}=
-\frac{1}{2} \int_{\partial_+ \tilde{M}_\delta} (H-(n-1)\Psi_{\delta,\varepsilon}) |u|^2 - \frac{1}{2} \int_{\partial_- \tilde{M}_\delta} (H+(n-1)\Psi_{\delta,\varepsilon}) |u|^2. \label{important_estimate_even_dim_2}
\end{align}
Note that $\psi_{\delta,\varepsilon}(\pi-\delta) = \cot(\pi-\varepsilon) = -\cot \varepsilon$ and $\psi_{\delta,\varepsilon}(\delta) = \cot \varepsilon$ by our choice of $\psi_{\delta,\varepsilon}$. This implies $H-(n-1)\Psi_{\delta,\varepsilon} = H+(n-1) \cot \varepsilon \geq 0$ on $\partial_+ \tilde{M}_\delta$ and $H+(n-1)\Psi_{\delta,\varepsilon} = H+(n-1) \cot \varepsilon \geq 0$ on $\partial_- \tilde{M}_\delta$. Therefore, the right-hand side in \eqref{important_estimate_even_dim_2} is non-positive.

By assumption, $r > 2 \sin \Theta$. Arguing as in Section~\ref{proof_of_band_comparison}, we can bound
\[
 \big\langle \mathcal{R}^{\tilde{E}} u,u \big\rangle \geq -\frac{(n-2)(n-1)}{4} \frac{1}{\sin^2 \Theta} |u|^2 - \frac{n-1}{r} \frac{1}{\sin \Theta} \, |u|^2.
\]
Moreover, using the inequality $|\nabla \Theta| \leq 1$, we obtain
\[|\nabla \Psi_{\delta,\varepsilon}| \leq |\psi_{\delta,\varepsilon}'(\Theta)| = \frac{1}{\sin^2(\Theta-\delta+\varepsilon)}\]
on the set $\Theta^{-1}\big(\big[\delta,\frac{\pi}{3}\big]\big)$,
\[|\nabla \Psi_{\delta,\varepsilon}| \leq |\psi_{\delta,\varepsilon}'(\Theta)| = \frac{1}{\sin^2(\Theta+\delta-\varepsilon)}\]
on the set $\Theta^{-1}\big(\big[\frac{2\pi}{3},\pi-\delta\big]\big)$, and
\[|\nabla \Psi_{\delta,\varepsilon}| \leq |\psi_{\delta,\varepsilon}'(\Theta)| \leq \frac{1}{\sin^2 \Theta} + K\delta\]
on the set $\Theta^{-1}\big(\big[\frac{\pi}{3},\frac{2\pi}{3}\big]\big)$. Using these facts together with the inequality $R \geq n(n-1)$, we conclude that
\begin{align*}
&\frac{1}{4} R |u|^2 + \big\langle \mathcal{R}^{\tilde{E}} u,u \big\rangle + \frac{n(n-1)}{4} \Psi_{\delta,\varepsilon}^2 |u|^2 - \frac{n-1}{2} |\nabla \Psi_{\delta,\varepsilon}| |u|^2 \\
&\qquad\geq \frac{n(n-1)}{4} |u|^2 - \frac{(n-2)(n-1)}{4} \frac{1}{\sin^2 \Theta} |u|^2 - \frac{n-1}{r} \frac{1}{\sin \Theta} |u|^2 \\
&\phantom{\qquad\geq}{} + \frac{n(n-1)}{4} \cot^2 (\Theta-\delta+\varepsilon) |u|^2 - \frac{n-1}{2} \frac{1}{\sin^2 (\Theta-\delta+\varepsilon)} |u|^2 \\
&\qquad= \frac{(n-2)(n-1)}{4} \left( \frac{1}{\sin^2 (\Theta-\delta+\varepsilon)} - \frac{1}{\sin^2 \Theta} \right) |u|^2 - \frac{n-1}{r} \frac{1}{\sin \Theta} |u|^2 \\
&\qquad\geq -\frac{n-1}{r} \frac{1}{\sin \Theta} |u|^2
\end{align*}
on the set $\Theta^{-1}\big(\big[\delta,\frac{\pi}{3}\big]\big)$,
\begin{align*}
&\frac{1}{4} R |u|^2 + \big\langle \mathcal{R}^{\tilde{E}} u,u \big\rangle + \frac{n(n-1)}{4} \Psi_{\delta,\varepsilon}^2 |u|^2 - \frac{n-1}{2} |\nabla \Psi_{\delta,\varepsilon}| |u|^2 \\
&\qquad\geq \frac{n(n-1)}{4} |u|^2 - \frac{(n-2)(n-1)}{4} \frac{1}{\sin^2 \Theta} |u|^2 - \frac{n-1}{r} \frac{1}{\sin \Theta} |u|^2 \\
&\phantom{\qquad\geq}{} + \frac{n(n-1)}{4} \cot^2 (\Theta+\delta-\varepsilon) |u|^2 - \frac{n-1}{2} \frac{1}{\sin^2 (\Theta+\delta-\varepsilon)} |u|^2 \\
&\qquad= \frac{(n-2)(n-1)}{4} \left( \frac{1}{\sin^2 (\Theta+\delta-\varepsilon)} - \frac{1}{\sin^2 \Theta} \right) |u|^2 - \frac{n-1}{r} \frac{1}{\sin \Theta} |u|^2 \\
&\qquad\geq -\frac{n-1}{r} \frac{1}{\sin \Theta} |u|^2
\end{align*}
on the set $\Theta^{-1}\big(\big[\frac{2\pi}{3},\pi-\delta\big]\big)$, and
\begin{align*}
&\frac{1}{4} R |u|^2 + \big\langle \mathcal{R}^{\tilde{E}} u,u \big\rangle + \frac{n(n-1)}{4} \Psi_{\delta,\varepsilon}^2 |u|^2 - \frac{n-1}{2} |\nabla \Psi_{\delta,\varepsilon}| |u|^2 \\
&\qquad\geq \frac{n(n-1)}{4} |u|^2 - \frac{(n-2)(n-1)}{4} \frac{1}{\sin^2 \Theta} |u|^2 - \frac{n-1}{r} \frac{1}{\sin \Theta} |u|^2 \\
&\phantom{\qquad\geq}{} + \frac{n(n-1)}{4} \cot^2 \Theta |u|^2 - \frac{n-1}{2} \frac{1}{\sin^2 \Theta} |u|^2 - \frac{n-1}{2} K\delta |u|^2 \\
&\qquad= -\frac{n-1}{r} \frac{1}{\sin \Theta} |u|^2 - \frac{n-1}{2} K\delta |u|^2
\end{align*}
on the set $\Theta^{-1}\big(\big[\frac{\pi}{3},\frac{2\pi}{3}\big]\big)$.
Putting these facts together, we obtain
\[
 \int_{\tilde{M}_\delta} \big|\tilde{P} u\big|^2 \leq \frac{n-1}{r} \int_{\tilde{M}_\delta} \frac{1}{\sin \Theta} |u|^2 + \frac{n-1}{2} K\delta \int_{\tilde{M}_\delta} 1_{\Theta^{-1}([\frac{\pi}{3},\frac{2\pi}{3}])} |u|^2.
\]
Hence, we can find an element $t_0 \in S^1$ such that
\[
 \int_{M_\delta \times \{t_0\}} \big|\tilde{P} u\big|^2 \leq \frac{n-1}{r} \int_{M_\delta \times \{t_0\}} \frac{1}{\sin \Theta} |u|^2 + \frac{n-1}{2} K\delta \int_{M_\delta \times \{t_0\}} 1_{\Theta^{-1}([\frac{\pi}{3},\frac{2\pi}{3}])} |u|^2
\]
and
\[
 \frac{2}{K\delta r} \int_{M_\delta \times \{t_0\}} \frac{1}{\sin \Theta} |u|^2 + \int_{M_\delta \times \{t_0\}} 1_{\Theta^{-1}([\frac{\pi}{3},\frac{2\pi}{3}])} |u|^2 > 0.
\]
From this, the assertion follows.
\end{proof}

\begin{Corollary}
\label{limiting_spinor_even_dim_2}
Suppose that $\delta \in \Delta_1$ and suppose that $\varepsilon \in (0,\delta)$ is chosen so that $H \geq -{(n-1)} \cot \varepsilon$ at each point on $\partial M_\delta$. Then there exists an element $t_0 \in S^1$ with the following property.
Let $f\colon \Omega \to S^n$ be defined by $f(x) := \tilde{f}(x,t_0) = h(\varphi(x),t_0)$ for $x \in \Omega$. Let $E$ denote the pull-back of $E_0^+$ under the map $f$.
Then there exists a section $s \in H^1(M_\delta,S \otimes E)$ such that
\[\int_{M_\delta} 1_{\Theta^{-1}([\frac{\pi}{3},\frac{2\pi}{3}])} |s|^2 = 1\]
and
\[\int_{M_\delta} \sum_{k=1}^n \left| \nabla_{e_k}^{S \otimes E} s + \frac{\rm i}{2} \psi_{\delta,\varepsilon}(\Theta) e_k \cdot s \right|^2 \leq \frac{n-1}{2} K\delta.\]
Here, $\nabla^{S \otimes E}$ denotes the connection on $S \otimes E$.
\end{Corollary}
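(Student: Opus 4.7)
The proof closely follows that of Corollary~\ref{limiting_spinor_even_dim}, with the new feature that the $\tilde{P}$-bound on the sections produced by Proposition~\ref{existence_of_spinor_even_dim_2} is uniform rather than vanishing. Apply Proposition~\ref{existence_of_spinor_even_dim_2} along $r_\ell \to \infty$ (with $r_\ell > 2$) to obtain $t_\ell \in S^1$ and sections $u^{(\ell)} \in C^\infty(\tilde{M}_\delta, \tilde{S} \otimes \tilde{E})$ satisfying
\[
\frac{2}{K\delta r_\ell} \int_{M_\delta \times \{t_\ell\}} \frac{|u^{(\ell)}|^2}{\sin\Theta} + \int_{M_\delta \times \{t_\ell\}} 1_{\Theta^{-1}([\pi/3,2\pi/3])}\, |u^{(\ell)}|^2 = 1
\]
together with $\int_{M_\delta \times \{t_\ell\}} |\tilde{P} u^{(\ell)}|^2 \leq \frac{n-1}{2}K\delta$. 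Passing to a subsequence, $t_\ell \to t_0 \in S^1$; define $f(x) := h(\varphi(x), t_0)$ and $E := f^* E_0^+$ as in the statement, together with the analogous $f^{(\ell)}$ and $E^{(\ell)}$. Let $s^{(\ell)} := u^{(\ell)}|_{M_\delta \times \{t_\ell\}} \in C^\infty(M_\delta, S \otimes E^{(\ell)})$.

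Construct bundle isometries $\sigma^{(\ell)}\colon E^{(\ell)} \to E$ exactly as in the proof of Corollary~\ref{limiting_spinor_even_dim}, via parallel transport in $E_0^+$ along the shortest $S^n$-geodesic from $f^{(\ell)}(x)$ to $f(x)$. Since $f^{(\ell)} \to f$ smoothly, the induced endomorphism-valued $1$-form $A^{(\ell)}$ encoding the difference of connections satisfies $\|A^{(\ell)}\|_\infty \to 0$. Set $\hat{s}^{(\ell)} := (\id \otimes \sigma^{(\ell)}) s^{(\ell)} \in C^\infty(M_\delta, S \otimes E)$. Assuming uniform $L^2$-bounds on $\hat{s}^{(\ell)}$ on $M_\delta$ (see the next paragraph), these combine with the uniform $\tilde{P}$-bound to give uniform $H^1$-bounds, whence a subsequential weak $H^1$-limit $s \in H^1(M_\delta, S \otimes E)$. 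Lower semicontinuity of the $L^2$-norm under weak convergence then yields the gradient bound
\[
\int_{M_\delta} \sum_{k=1}^n \left|\nabla^{S\otimes E}_{e_k} s + \frac{\rm i}{2}\psi_{\delta,\varepsilon}(\Theta)\, e_k \cdot s\right|^2 \leq \frac{n-1}{2}K\delta.
\]

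The main technical obstacle, absent from Corollary~\ref{limiting_spinor_even_dim}, is twofold: one must (i) obtain uniform $L^2$-bounds on $\hat{s}^{(\ell)}$ over $M_\delta$, in particular on the polar caps $\Theta^{-1}([\delta, \pi/3] \cup [2\pi/3, \pi-\delta])$ where the weight $1/\sin\Theta$ in the $\ell$-normalization blows up, and (ii) establish the exact equality $\int_{M_\delta} 1_{\Theta^{-1}([\pi/3,2\pi/3])} |s|^2 = 1$ in the limit. The $\ell$-normalization yields only $\int 1_{\Theta^{-1}([\pi/3,2\pi/3])} |\hat{s}^{(\ell)}|^2 \leq 1$, with the deficit equal to $\frac{2}{K\delta r_\ell}\int |\hat{s}^{(\ell)}|^2/\sin\Theta$, which could a priori stay bounded away from zero. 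The hardest step is thus to show, along a further subsequence, that this deficit tends to zero, equivalently that $\int |\hat{s}^{(\ell)}|^2/\sin\Theta = o(r_\ell)$; this requires sharper control of the $(1/\sin\Theta)$-weighted polar-cap $L^2$-mass of $u^{(\ell)}$, obtained from the Dirac equation $\mathcal{D}^{\tilde{S}\otimes\tilde{E}} u^{(\ell)} = \frac{{\rm i}n}{2}\psi_{\delta,\varepsilon}(\Theta) u^{(\ell)}$ together with the boundary conditions and the uniform $\tilde{P}$-bound. With this in hand, Rellich compactness on the compact middle region $\Theta^{-1}([\pi/3,2\pi/3])$ upgrades the weak $L^2$-convergence to strong there, so that the exact normalization passes to the limit; elliptic regularity applied to the first-order system on $s$ then confirms $s \in H^1(M_\delta, S \otimes E)$.
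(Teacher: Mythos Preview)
Your overall architecture is right, and you correctly isolate the two issues not present in Corollary~\ref{limiting_spinor_even_dim}: a uniform $L^2$-bound on the $s^{(\ell)}$ over all of $M_\delta$, and passing the exact normalization to the limit. However, you misdiagnose the difficulty and propose an unnecessarily heavy (and unspecified) fix.

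The weight $1/\sin\Theta$ does \emph{not} blow up on the polar caps of $M_\delta$: by construction $\Theta$ takes values in $[\delta,\pi-\delta]$ on $M_\delta$, so $1/\sin\Theta \le 1/\sin\delta$ everywhere on $M_\delta$. Hence, once a uniform bound $\int_{M_\delta}|s^{(\ell)}|^2 \le C(\delta,\varepsilon)$ is available, the deficit term satisfies
\[
\frac{2}{K\delta r_\ell}\int_{M_\delta}\frac{|s^{(\ell)}|^2}{\sin\Theta} \;\le\; \frac{2\,C(\delta,\varepsilon)}{K\delta\,r_\ell\,\sin\delta} \longrightarrow 0,
\]
so $\int_{M_\delta} 1_{\Theta^{-1}([\pi/3,2\pi/3])}|s^{(\ell)}|^2 \to 1$, and Rellich on the compact manifold $M_\delta$ finishes the normalization. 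There is no need to invoke the Dirac equation or the boundary conditions here.

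What actually supplies the uniform $L^2$-bound in the paper is a Poincar\'e-type inequality on the connected compact manifold $M_\delta$ (cited from \cite{Baer-Brendle-Chow-Hanke}):
\[
\int_{M_\delta}|s^{(\ell)}|^2 \;\le\; C(\delta,\varepsilon)\!\int_{M_\delta}\sum_{k=1}^n\Big|\nabla_{e_k}^{S\otimes E^{(\ell)}} s^{(\ell)}+\tfrac{\rm i}{2}\psi_{\delta,\varepsilon}(\Theta)\,e_k\cdot s^{(\ell)}\Big|^2 + C(\delta,\varepsilon)\!\int_{M_\delta}1_{\Theta^{-1}([\pi/3,2\pi/3])}|s^{(\ell)}|^2.
\]
Both terms on the right are bounded by $1$ (up to the constant $\tfrac{n-1}{2}K\delta$), independently of $\ell$, which immediately gives $\int_{M_\delta}|s^{(\ell)}|^2 \le C(\delta,\varepsilon)$. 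This single estimate simultaneously resolves your points (i) and (ii); your proposed route through ``sharper control of the weighted polar-cap mass via the Dirac equation and boundary conditions'' is not needed and is left vague in your write-up.
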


\begin{proof}
Let us consider a sequence $r_\ell \to \infty$.
For each $\ell$, we can find an element $t_\ell \in S^1$ and a~section $u^{(\ell)} \in C^\infty\big(\tilde{M}_\delta,\tilde{S} \otimes \tilde{E}\big)$ such that
\[\frac{2}{K\delta r_\ell} \int_{M_\delta \times \{t_\ell\}} \frac{1}{\sin \Theta} \big|u^{(\ell)}\big|^2 + \int_{M_\delta \times \{t_\ell\}} 1_{\Theta^{-1}([\frac{\pi}{3},\frac{2\pi}{3}])} \big|u^{(\ell)}\big|^2 = 1\]
and
\[\int_{M_\delta \times \{t_\ell\}} \big|\tilde{P} u^{(\ell)}\big|^2 \leq \frac{n-1}{2} K\delta.\]
After passing to a subsequence, we may assume that the sequence $t_\ell$ converges to an element $t_0 \in S^1$. As in Section \ref{proof_of_band_comparison}, we define maps $f\colon \Omega \to S^n$ and $f^{(\ell)}\colon \Omega \to S^n$ by
\[
 f(x) := \tilde{f}(x,t_0) = h(\varphi(x),t_0), \qquad f^{(\ell)}(x) := \tilde{f}(x,t_\ell) = h(\varphi(x) ,t_\ell)
\]
for $x \in \Omega$.
Let $E$ denote the pull-back of $E_0^+$ under $f$, and let $E^{(\ell)}$ denote the pull-back of $E_0^+$ under the map $f^{(\ell)}$. The restriction of $u^{(\ell)}$ to $M_\delta \times \{t_\ell\}$ gives a section $s^{(\ell)} \in C^\infty\big(M_\delta,S \otimes E^{(\ell)}\big)$ such that
\[\frac{2}{K\delta r_\ell} \int_{M_\delta} \frac{1}{\sin \Theta} \big|s^{(\ell)}\big|^2 + \int_{M_\delta} 1_{\Theta^{-1}([\frac{\pi}{3},\frac{2\pi}{3}])} \big|s^{(\ell)}\big|^2 = 1\]
and
\[\int_{M_\delta} \sum_{k=1}^n \left| \nabla_{e_k}^{S \otimes E^{(\ell)}} s^{(\ell)} + \frac{\rm i}{2} \psi_{\delta,\varepsilon}(\Theta) e_k \cdot s^{(l)} \right|^2 \leq \frac{n-1}{2} K\delta.\]
Since $M_\delta$ is connected, we may estimate
\begin{align*}
\begin{aligned}
\int_{M_\delta} \big|s^{(\ell)}\big|^2
\leq{}& C(\delta,\varepsilon) \int_{M_\delta} \sum_{k=1}^n \left| \nabla_{e_k}^{S \otimes E^{(\ell)}} s^{(\ell)} + \frac{\rm i}{2} \psi_{\delta,\varepsilon}(\Theta) e_k \cdot s^{(l)} \right|^2 \\
& + C(\delta,\varepsilon) \int_{M_\delta} 1_{\Theta^{-1}([\frac{\pi}{3},\frac{2\pi}{3}])} \big|s^{(\ell)}\big|^2
\end{aligned}
\end{align*}
(see \cite{Baer-Brendle-Chow-Hanke}).
This implies \smash{$\int_{M_\delta} \big|s^{(\ell)}\big|^2 \leq C(\delta,\varepsilon)$}.
Analogously to Section~\ref{proof_of_band_comparison}, we consider a sequence of bundle maps $\sigma^{(\ell)}\colon E^{(\ell)} \to E$. After passing to a~subsequence, the sequence $ \big(\id \otimes \sigma^{(\ell)}\big) s^{(\ell)} \in C^\infty(M_\delta,S \otimes E)$ converges, in the weak topology of $H^1(M_\delta,S \otimes E)$, to a section~$s$. The section $s \in H^1(M_\delta,S \otimes E)$ has all the desired properties.
\end{proof}

\begin{Corollary}
\label{special_spinor_on_Omega}
There exists an element $t_0 \in S^1$ with the following property.
Let $f\colon \Omega \to S^n$ be defined by $f(x) := \tilde{f}(x,t_0) = h(\varphi(x),t_0)$ for $x \in \Omega$.
Let $E$ denote the pull-back of $E_0^+$ under the map $f$.
Then there exists a section $s \in C^\infty(\Omega,S \otimes E)$ such that
\[\int_\Omega 1_{\Theta^{-1}([\frac{\pi}{3},\frac{2\pi}{3}])} |s|^2 = 1\]
and
\[\nabla_X^{S \otimes E} s + \frac{\rm i}{2} \cot \Theta X \cdot s = 0\]
for every vector field $X$.
Here, $\nabla^{S \otimes E}$ denotes the connection on $S \otimes E$.
\end{Corollary}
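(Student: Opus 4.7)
The plan is to apply Corollary~\ref{limiting_spinor_even_dim_2} to a sequence $\delta_\ell \in \Delta_1$ with $\delta_\ell \to 0$ (possible since $0 \in \overline{\Delta_1}$) and to pass to the limit via a diagonal subsequence extraction in $H^1_{\mathrm{loc}}(\Omega)$. Unlike Corollary~\ref{limiting_spinor_even_dim}, where the domain was fixed, the near-parallel spinors $s_\ell$ now live over the growing compact domains $M_{\delta_\ell}$, so the argument must handle variable domains.

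For each $\ell$ I would first choose $\varepsilon_\ell \in (0,\delta_\ell)$ so that $H \geq -(n-1)\cot\varepsilon_\ell$ on $\partial M_{\delta_\ell}$; this is possible because $H$ is bounded on the compact hypersurface $\partial M_{\delta_\ell}$ while $-\cot\varepsilon \to -\infty$ as $\varepsilon \to 0^+$. Corollary~\ref{limiting_spinor_even_dim_2} then supplies $t_\ell \in S^1$, the map $f^{(\ell)}(x) = h(\varphi(x),t_\ell)$, the pull-back bundle $E^{(\ell)}$, and a section $s_\ell \in H^1\bigl(M_{\delta_\ell}, S \otimes E^{(\ell)}\bigr)$ satisfying the normalization $\int_{M_{\delta_\ell}} 1_{\Theta^{-1}([\pi/3,2\pi/3])} |s_\ell|^2 = 1$ and the smallness bound $\int_{M_{\delta_\ell}} \sum_k \bigl|\nabla^{S \otimes E^{(\ell)}}_{e_k} s_\ell + \frac{{\rm i}}{2}\psi_{\delta_\ell,\varepsilon_\ell}(\Theta)\,e_k \cdot s_\ell\bigr|^2 \leq \frac{n-1}{2}K\delta_\ell$. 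After passing to a subsequence I may assume $t_\ell \to t_0 \in S^1$, and define $f$ and $E$ accordingly. Exactly as in the proof of Corollary~\ref{limiting_spinor_even_dim}, the parallel-transport bundle isometries $\sigma^{(\ell)}\colon E^{(\ell)} \to E$ let me view $\tilde{s}_\ell := \bigl(\id \otimes \sigma^{(\ell)}\bigr) s_\ell$ as sections of the fixed bundle $S \otimes E$ over $M_{\delta_\ell}$, with an error $1$-form $A^{(\ell)}$ tending to zero uniformly on compact subsets of $\Omega$.

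On each compact $K \subset \Omega$ one has $\Theta(K) \subset [a,\pi-a]$ for some $a > 0$, and inspection of the three regimes defining $\psi_{\delta,\varepsilon}$ gives $\psi_{\delta_\ell,\varepsilon_\ell} \circ \Theta \to \cot\Theta$ uniformly on $K$ with $O(\delta_\ell)$ error. The main technical hurdle is then a uniform $L^2$-bound $\|\tilde{s}_\ell\|_{L^2(K)} \leq C(K)$ independent of $\ell$, since the Poincar\'e-type inequality used internally in Corollary~\ref{limiting_spinor_even_dim_2} provides only a constant $C(\delta_\ell,\varepsilon_\ell)$ that may blow up as $\delta_\ell \to 0$. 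I would obtain this via a Harnack-type estimate for near-parallel spinors: since Clifford multiplication by a real vector is skew-Hermitian, a short computation yields the Kato inequality $\bigl|\nabla|s_\ell|\bigr| \leq \bigl|\nabla^{\Psi_\ell} s_\ell\bigr| + \frac{1}{2}|\psi_{\delta_\ell,\varepsilon_\ell}(\Theta)|\,|s_\ell|$, where $\Psi_\ell := \psi_{\delta_\ell,\varepsilon_\ell} \circ \Theta$. Choosing a compact, path-connected $K' \subset \Omega$ containing both $K$ and the compact central region $\Theta^{-1}([\pi/3,2\pi/3])$ (both of which lie in $M_{\delta_\ell}$ for all large $\ell$ by a connecting-paths argument using properness of $\Theta$), the uniform boundedness of $\cot\Theta$ on $K'$ combined with the $L^2$-smallness of $\nabla^{\Psi_\ell} s_\ell$ and the unit normalization on the central region yields, by a Gr\"onwall- or Moser-type iteration, the desired uniform $L^2$-bound on $K$.

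With uniform $H^1(K)$-bounds for every compact $K$, a diagonal subsequence extracts a weak $H^1_{\mathrm{loc}}$-limit $s \in H^1_{\mathrm{loc}}(\Omega, S \otimes E)$. Combining the $L^2$-smallness of the modified derivatives with the uniform convergence $\psi_{\delta_\ell,\varepsilon_\ell} \circ \Theta \to \cot\Theta$ shows that $s$ satisfies $\nabla_X^{S \otimes E} s + \frac{{\rm i}}{2} \cot(\Theta) \, X \cdot s = 0$ distributionally for every smooth vector field $X$. As in Corollary~\ref{limiting_spinor_even_dim}, this is an overdetermined elliptic system, so elliptic regularity upgrades $s$ to a smooth section and the equation holds classically. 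Strong $L^2$-convergence on the compact set $\Theta^{-1}([\pi/3,2\pi/3])$ (Rellich--Kondrachov) finally gives $\int_\Omega 1_{\Theta^{-1}([\pi/3,2\pi/3])}|s|^2 = 1$, so in particular $s \not\equiv 0$, completing the construction.
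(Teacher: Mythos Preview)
Your argument follows the same overall strategy as the paper: pick $\delta_\ell\in\Delta_1$ with $\delta_\ell\to 0$, apply Corollary~\ref{limiting_spinor_even_dim_2}, transport to the fixed bundle $E$ via the isometries $\sigma^{(\ell)}$, extract a weak $H^1_{\mathrm{loc}}$-limit using a diagonal argument, and upgrade via elliptic regularity. The one substantive difference is the uniform $H^1_{\mathrm{loc}}$-bound: the paper simply invokes a Poincar\'e-type inequality from the companion reference \cite{Baer-Brendle-Chow-Hanke}, whereas you propose a self-contained Kato argument. Your version is morally correct but can be sharpened: the very skew-Hermitian property you cite shows that $\nabla^{\Psi_\ell}$ is a \emph{metric} connection, so the Kato inequality improves to $\bigl|\nabla|s_\ell|\bigr|\le\bigl|\nabla^{\Psi_\ell}s_\ell\bigr|$ with no zero-order term; the subsequent step then reduces to the standard scalar Poincar\'e inequality on a connected compact $K'$ rather than an iteration. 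One further point the paper makes explicit and you should too: after passing to a monotone subsequence the domains $M_{\delta_\ell}$ are increasing with $\bigcup_\ell M_{\delta_\ell}=\Omega$, which is what guarantees that any fixed compact $K$ and the whole compact set $\Theta^{-1}\bigl([\tfrac{\pi}{3},\tfrac{2\pi}{3}]\bigr)$ eventually lie in $M_{\delta_\ell}$. With these clarifications your proof is correct and has the merit of being self-contained where the paper outsources.
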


\begin{proof}
Recall that we are assuming that the set $\Delta_1$ contains $0$ in its closure. In other words, we can find a sequence of real numbers $\delta_\ell \in \Delta_1$ converging to $0$.
After passing to a subsequence, we may assume that the sequence $\delta_\ell$ is monotonically decreasing.
Consequently, $M_{\delta_l}$ is an increasing sequence of compact domains in $\Omega$, and $\bigcup_l M_{\delta_l} = \Omega$.

We choose a sequence $\varepsilon_\ell \in (0,\delta_\ell)$ such that $H \geq -(n-1) \cot \varepsilon_\ell$ at each point on $\partial M_{\delta_\ell}$.
By Corollary~\ref{limiting_spinor_even_dim_2}, we can find a sequence of elements $t_\ell \in S^1$ with the following property.
Let~${f^{(\ell)}\colon \Omega \to S^n}$ be defined by $f^{(\ell)}(x) := \tilde{f}(x,t_\ell) = h(\varphi(x),t_\ell)$ for $x \in \Omega$.
Let $E^{(\ell)}$ denote the pull-back of $E_0^+$ under the map $f^{(\ell)}$.
Then there exists a section $s^{(\ell)} \in H^1\big(M_{\delta_\ell},S \otimes E^{(\ell)}\big)$ such that
\[
 \int_{M_{\delta_\ell}} 1_{\Theta^{-1}([\frac{\pi}{3},\frac{2\pi}{3}])} \big|s^{(\ell)}\big|^2 = 1
 \]
and
\[
 \int_{M_{\delta_\ell}} \sum_{k=1}^n \left| \nabla_{e_k}^{S \otimes E^{(\ell)}} s^{(\ell)} + \frac{\rm i}{2} \psi_{\delta_\ell,\varepsilon_\ell}(\Theta) e_k \cdot s^{(\ell)} \right|^2 \leq \frac{n-1}{2} K\delta_\ell.
 \]
Since $\Omega$ is connected, results in \cite{Baer-Brendle-Chow-Hanke} imply that the sequence $s^{(\ell)}$ is bounded in $H_{\mathrm{loc}}^1$.

After passing to a subsequence, we may assume that the sequence $t_\ell$ converges to an element~${t_0 \in S^1}$. We define a map $f\colon \Omega \to S^n$ by $f(x) := \tilde{f}(x,t_0) = h(\varphi(x),t_0)$ for $x \in \Omega$.
Let $E$ denote the pull-back of $E_0^+$ under $f$.
As in Section \ref{proof_of_band_comparison}, we consider a sequence of bundle maps $\sigma^{(\ell)}\colon E^{(\ell)} \to E$. After passing to a subsequence, the sequence $\big(\id \otimes \sigma^{(\ell)}\big) s^{(\ell)} \in H^1(M_{\delta_\ell},S \otimes E)$ converges weakly in $H_{\text{\rm loc}}^1$ to a section $s \in H_{\mathrm{loc}}^1(\Omega,S \otimes E)$. The section $s$ satisfies
\[\int_\Omega 1_{\Theta^{-1}([\frac{\pi}{3},\frac{2\pi}{3}])} |s|^2 = 1\]
and
\begin{equation}
\nabla_X^{S \otimes E} s + \frac{\rm i}{2} \cot \Theta X \cdot s = 0
\label{eq:covariant_derivative_s}
\end{equation}
for every vector field $X$, where \eqref{eq:covariant_derivative_s} is understood in the sense of distributions. Since $s$ is a weak solution of an overdetermined elliptic system, we conclude that $s$ is smooth and \eqref{eq:covariant_derivative_s} holds in the classical sense.
\end{proof}

Having established Corollary~\ref{special_spinor_on_Omega}, the proof of Theorem~\ref{punctures} now proceeds as in Section~\ref{proof_of_band_comparison}, with the choice $\rho(\theta) = \sin \theta$ and $\psi(\theta) = \cot \theta$.
As in Section~\ref{proof_of_band_comparison}, we conclude that $g = \Phi^*(g_0)$.
In other words, $\Phi$ is a local Riemannian isometry.
Since $\Phi$ is proper, the domain is connected, and the target is simply connected, $\Phi$ is a global Riemannian isometry.
This completes the proof of Theorem~\ref{punctures} for $n$ even.

\subsection[Proof of Theorem~\ref{punctures} for n odd]{Proof of Theorem~\ref{punctures} for $\boldsymbol{n}$ odd}

When $n$ is odd, the proof of Theorem~\ref{punctures} is simpler, as we can work with $\Omega$ and $M_\delta$ directly, and we do not need to consider the Cartesian product with $S^1$. We omit the details.

\appendix

\appendix
\section{The curvature term in the Weitzenb\"ock formula}\label{appendix:Weitzen}

In this section, we recall a well-known estimate for the curvature term in the Weitzenb\"ock formula. Let us fix integers $n,N \geq 2$. Let $(M,g)$ be a Riemannian spin manifold of dimension~$n$, and let $f\colon (M,g) \to \big(S^N,g_{S^N}\big)$ be a smooth map to the round unit sphere of dimension $N$. Let~${S \to M}$ denote the spinor bundle of $M$, let $E_0 \to S^N$ denote the spinor bundle of $S^N$ and set $E = f^* E_0$. Let $\mathcal{R}^E$ denote the curvature term appearing in the Weitzenb\"ock formula for the square of the twisted Dirac operator on $S \otimes E$, so that
\[
\big(\mathcal{D}^{S \otimes E }\big)^2 s = \big(\nabla^{\tilde{S} \otimes \tilde{E}}\big)^*\nabla^{\tilde{S} \otimes \tilde{E}} + \frac{1}{4} R_g s + \mathcal{R}^{E} s.
\]
The following estimate for the curvature term $\mathcal{R}^E$ is well known.

\begin{Proposition}[cf.\ Llarull \cite{Llarull}]
\label{curvature.term}
Let $x \in M$ and let $ \mu_1,\hdots,\mu_n \geq 0$ denote the singular values of the differential ${\rm d}f_x\colon (T_x M , g_x) \to \big(T_{f(x)} S^N,g_{S^N}\big)$. Then
\[
 \big|\mathcal{R}^E s\big| \leq \frac{1}{4} \sum_{\substack{1 \leq j,k \leq n\\ j \neq k}} \mu_j \mu_k |s|
\]
for all $s \in S_x \otimes E_x$.
\end{Proposition}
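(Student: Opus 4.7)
The plan is to compute $\mathcal{R}^E$ explicitly in a frame adapted to the singular value decomposition of ${\rm d}f_x$, and then apply the triangle inequality. Recall the standard formula (see Lawson--Michelsohn \cite{Lawson-Michelsohn}, Chapter~II, equations (8.5)/(8.8)) for the twisted curvature term, namely
\[
\mathcal{R}^E s = \frac{1}{2} \sum_{j,k=1}^n e_j \cdot e_k \cdot R^E(e_j,e_k) s ,
\]
where $\{e_1,\dots,e_n\}$ is an orthonormal frame on $M$ near $x$ and $R^E$ is the curvature of the induced connection on $E = f^* E_0$.

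First, I would compute the curvature of the spinor bundle $E_0$ on the round sphere $S^N$. Using that $S^N$ has constant sectional curvature $1$ and the lifting formula from the Levi-Civita connection to the spin connection, a direct calculation gives, for a local orthonormal frame $\{\varepsilon_a\}$ on $S^N$,
\[
R^{E_0}(X,Y)\sigma = \frac{1}{4}\sum_{a,b} \langle R^{S^N}(X,Y)\varepsilon_a,\varepsilon_b\rangle \varepsilon_a\cdot\varepsilon_b \cdot \sigma = \frac{1}{4}(Y\cdot X - X\cdot Y)\cdot\sigma.
\]
In particular, whenever $X$ and $Y$ are orthogonal, the Clifford relations simplify this to $R^{E_0}(X,Y)\sigma = -\tfrac{1}{2}\, X\cdot Y\cdot\sigma$. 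Since $E$ is the pull-back bundle, $R^E(e_j,e_k) = R^{E_0}({\rm d}f_x(e_j),{\rm d}f_x(e_k))$.

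Next, I would exploit the singular value decomposition of ${\rm d}f_x$: choose the orthonormal basis $\{e_1,\dots,e_n\}$ of $T_x M$ so that ${\rm d}f_x(e_j) = \mu_j \varepsilon_j$, where the $\varepsilon_j$ with $\mu_j>0$ form an orthonormal family in $T_{f(x)} S^N$ (the remaining $\varepsilon_j$ being irrelevant, as the corresponding $\mu_j$ vanish). For $j\neq k$ the vectors $\varepsilon_j$ and $\varepsilon_k$ are orthogonal, hence
\[
R^E(e_j,e_k) = -\tfrac{1}{2}\mu_j\mu_k\, \varepsilon_j\cdot \varepsilon_k .
\]
The diagonal terms $j=k$ vanish because $R^E(e_j,e_j)=0$. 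Substituting into the Weitzenb\"ock curvature operator yields
\[
\mathcal{R}^E s = -\frac{1}{4}\sum_{\substack{1\leq j,k\leq n \\ j\neq k}} \mu_j\mu_k\, (e_j\cdot e_k)\otimes(\varepsilon_j\cdot \varepsilon_k)\, s,
\]
where $e_j\cdot e_k$ acts on the $S$-factor and $\varepsilon_j\cdot \varepsilon_k$ on the $E$-factor of $S\otimes E$.

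The conclusion is then immediate: since $e_j,e_k$ are orthonormal, Clifford multiplication by the unit bivector $e_j\cdot e_k$ is an isometry on $S$, and similarly $\varepsilon_j\cdot \varepsilon_k$ acts isometrically on $E$ (and trivially on spinors corresponding to indices with $\mu_j\mu_k=0$, so those terms vanish anyway). Applying the triangle inequality to the above sum gives
\[
|\mathcal{R}^E s| \leq \frac{1}{4}\sum_{\substack{1\leq j,k\leq n \\ j\neq k}} \mu_j\mu_k\, |s|,
\]
which is the asserted bound. There is no real obstacle here; the one delicate bookkeeping issue is handling the case when some singular values vanish, which is harmless because the corresponding $\varepsilon_j$ can be chosen arbitrarily without affecting the estimate.
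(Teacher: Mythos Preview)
Your proposal is correct and follows essentially the same approach as the paper: choose a singular-value-adapted frame, pull back the spinor curvature of $S^N$ (which the paper obtains via \cite[Chapter~II, formula~(4.37)]{Lawson-Michelsohn} and you derive directly from constant sectional curvature~$1$), substitute into the Weitzenb\"ock curvature term, and apply the triangle inequality using that Clifford multiplication by $(e_j\cdot e_k)\otimes(\varepsilon_j\cdot\varepsilon_k)$ is an isometry. The only cosmetic difference is that the paper explicitly introduces the rank $m$ of ${\rm d}f_x$ and restricts the sums to $1\le j,k\le m$, whereas you keep the full sum and note that terms with $\mu_j\mu_k=0$ drop out; both are fine.
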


\begin{proof}
Let $m$ denote the rank of the differential ${\rm d}f_x$. Clearly, $m \leq \min \{n,N\}$. We assume that the singular values are arranged so that $\mu_k > 0$ for $1 \leq k \leq m$ and $\mu_k = 0$ for $m+1 \leq k \leq N$. We can find an orthonormal basis $\{e_1,\hdots,e_n\}$ of $(T_x M, g_x)$ and an orthonormal basis $\{\varepsilon_1, \ldots, \varepsilon_N\}$ of $\big(T_{f(x)} S^N, g_{S^N}\big)$ such that ${\rm d}f_x(e_k) = \mu_k \varepsilon_k$ for $1 \leq k \leq m$ and ${\rm d}f_x(e_k) = 0$ for $m+1 \leq k \leq n$.

Let $F^{E_0} \in \Omega^2(S^N, {\rm End}(E_0))$ denote the curvature of $E_0$, and let $F^E \in \Omega^2(M, {\rm End}(E))$ denote the curvature of $E$. For each $s \in S_x \otimes E_x = S_x \otimes (E_0)_{f(x)}$, formula~(8.22) in \cite[Chapter II]{Lawson-Michelsohn} gives
\begin{align*}
\mathcal{R}^E s
&= \frac{1}{2} \sum_{\substack{1 \leq j,k \leq n\\ j \neq k}} \big((e_j \cdot e_k) \otimes F^E_{e_j, e_k}\big) \cdot s = \frac{1}{2} \sum_{\substack{1 \leq j,k \leq n\\ j \neq k}} \big((e_j \cdot e_k) \otimes F^{E_0}_{{\rm d}f_x(e_j), {\rm d}f_x(e_k)}\big) \cdot s \\
&= \frac{1}{2} \sum_{\substack{1 \leq j,k \leq m\\ j \neq k}} \mu_j \mu_k \big((e_j \cdot e_k) \otimes F^{E_0}_{\varepsilon_j,\varepsilon_k}\big) \cdot s.
\end{align*}
Since the curvature operator of $S^N$ acts as the identity on $2$-forms, we obtain by formula~(4.37) in \cite[Chapter II]{Lawson-Michelsohn} (also compare \cite[Lemma 4.3]{Llarull}) that $F^{E_0}_{\varepsilon_j,\varepsilon_k} \eta = \frac{1}{2} \varepsilon_k \cdot \varepsilon_j \cdot \eta$ for all $j \neq k$ and all $\eta \in (E_0)_{f(x)}$.
Putting everything together, it follows that
\[
 \mathcal{R}^E s = \frac{1}{4} \sum_{\substack{1 \leq j,k \leq m\\ j \neq k}} \mu_j \mu_k ((e_j \cdot e_k) \otimes (\varepsilon_k \cdot \varepsilon_j)) \cdot s.
\]
For each pair $j \neq k$, Clifford multiplication by $(e_j \cdot e_k) \otimes (\varepsilon_k \cdot \varepsilon_j)$ on $S_x \otimes E_x = S_x \otimes (E_0)_{f(x)}$ is a self-adjoint involution, hence an isometry.
Therefore, $
 |((e_j \cdot e_k) \otimes (\varepsilon_k \cdot \varepsilon_j)) \cdot s| = |s|$ for all $1 \leq j,k \leq m$.
This finally implies
\begin{equation*}
\big|\mathcal{R}^E s\big| \leq \frac{1}{4} \sum_{\substack{1 \leq j,k \leq m\\ j \neq k}} \mu_j \mu_k |s|.\tag*{\qed}
\end{equation*} \renewcommand{\qed}{}
\end{proof}

\section{A holographic index theorem}

\label{holographic}

We prove a theorem which relates the index on a manifold with boundary with that of the boundary.
We only need it for twisted spinorial Dirac operators but since it may be of independent interest, we show it for the larger class of self-adjoint Dirac-type operators.

Let $M$ be a compact Riemannian manifold with boundary $\partial M$ with outward unit normal vector field $\nu$.
Let $S \to M$ be a Hermitian vector bundle.
Let $\mathcal{D}$ be a differential operator of first order.
Its principal symbol is characterized by $\mathcal{D}(fs)=f\mathcal{D}s+\sigma_{\mathcal{D}}({\rm d}f)s$.
We say that $\mathcal{D}$ is of \emph{Dirac type} if its principal symbol satisfies the Clifford relations
\[
\sigma_{\mathcal{D}}(\xi)\sigma_{\mathcal{D}}(\eta) + \sigma_{\mathcal{D}}(\eta)\sigma_{\mathcal{D}}(\xi) = -2g(\xi,\eta)
\]
for all $\xi,\eta\in T^*_xM$ and $x\in M$.
In particular, if $\mathcal{D}$ is of Dirac type, then $\mathcal{D}$ is elliptic.
All generalized Dirac operators in the sense of Gromov and Lawson are of Dirac type.

We assume that the restriction of $S$ to the boundary $\partial M$ splits into two orthogonal subbundles, $S|_{\partial M} = S^+\oplus S^-$.
A first-order differential operator $\mathcal{D}^\partial\colon C^\infty(\partial M, S) \to C^\infty(\partial M, S)$ is called \emph{adapted to} $\mathcal{D}$ if the principal symbols are related by
\begin{equation}
\sigma_{\mathcal{D}^\partial}(\xi)=-\sigma_{\mathcal{D}}\big(\nu^\flat\big)^{-1}\sigma_{\mathcal{D}}(\xi)
\label{eq:boundarycompatibility}
\end{equation}
for all $\xi\in T^*\partial M$.
Here $\nu^\flat$ is the $1$-form metrically related to $\nu$.\footnote{In \cite{Baer-Ballmann} and many other references one works with the inward unit normal rather than the outward pointing one. Then there is no $-$ sign in \eqref{eq:boundarycompatibility} and the roles of $V_<$ and $V_>$ in the proof of Theorem~\ref{thm:HolIndex} get interchanged.}
If $\mathcal{D}$ is of Dirac type then so is $\mathcal{D}^\partial$.

If $\mathcal{D}^\partial$ interchanges the subbundles, i.e., $\mathcal{D}^\partial\colon C^\infty\big(\partial M, S^\pm\big) \to C^\infty\big(\partial M, S^\mp\big)$, then we call $\mathcal{D}^\partial$ an \emph{odd operator}.
We denote by $C^\infty_\pm(M,S)$ the space of all sections $s$ of $S$ which are smooth up to the boundary and satisfy $s(x)\in S^\pm_x$ for all $x\in\partial M$.

\begin{Theorem}\label{thm:HolIndex}
Let $\mathcal{D}$ be a formally self-adjoint Dirac-type operator and let $\mathcal{D}^\partial$ be a formally self-adjoint odd operator adapted to $\mathcal{D}$.
Assume that $\sigma_{\mathcal{D}}\big(\nu^\flat\big)$ preserves the bundles $S^+$ and $S^-$ and anti-commutes with $\mathcal{D}^\partial$.
Then the operators $\mathcal{D}\colon C^\infty_\pm(M,S)\to C^\infty(M,S)$ and $\mathcal{D}^\partial\colon C^\infty\big(\partial M,S^\pm\big)\to C^\infty\big(\partial M,S^\mp\big)$ are Fredholm and their indices satisfy
\begin{align}
&\mathrm{ind}(\mathcal{D}\colon C^\infty_+(M,S) \to C^\infty(M,S)) \nonumber \\
&\qquad=
-\mathrm{ind}(\mathcal{D}\colon C^\infty_-(M,S)\to C^\infty(M,S)) \label{eq:ind1}\\
&\qquad=
\frac{1}{2} \mathrm{ind}\big(\mathcal{D}^\partial\colon C^\infty\big(\partial M,S^+\big)\to C^\infty(\partial M,S^-)\big) \label{eq:ind2}\\
&\qquad=
-\frac{1}{2} \mathrm{ind}\big(\mathcal{D}^\partial\colon C^\infty(\partial M,S^-)\to C^\infty\big(\partial M,S^+\big)\big) .\label{eq:ind3}
\end{align}
\end{Theorem}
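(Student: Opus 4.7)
My plan is to proceed in three steps. First, I will establish the Fredholm property of both operators via the Lopatinsky--Shapiro condition. Second, I will derive the sign-flip identities \eqref{eq:ind1} and \eqref{eq:ind3} from the formal self-adjointness of $\mathcal{D}$ and $\mathcal{D}^\partial$. Third, I will reduce the main identity \eqref{eq:ind2} to a product structure in a collar of $\partial M$ and relate the local boundary condition to the Atiyah--Patodi--Singer condition via a cylinder attachment; the main obstacle will be identifying the relative Fredholm index of these two boundary conditions with precisely half of the boundary operator's index.

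For Fredholmness, I would verify the Lopatinsky--Shapiro condition for the local boundary projection. In normal coordinates at a boundary point, the condition reduces to showing that the space of bounded solutions on $[0,\infty)$ of the frozen constant-coefficient system $\sigma_{\mathcal{D}}(\nu^\flat)\partial_t u+\sigma_{\mathcal{D}}(\xi) u=0$ projects isomorphically onto $S^{+}_x$. Using the adaptedness relation \eqref{eq:boundarycompatibility}, the system becomes $\partial_t u=\sigma_{\mathcal{D}^\partial}(\xi) u$, whose bounded subspace is the negative spectral subspace of $\sigma_{\mathcal{D}^\partial}(\xi)$. The hypotheses that $\sigma_{\mathcal{D}}(\nu^\flat)$ preserves $S^{\pm}$ and anti-commutes with $\mathcal{D}^\partial$ force this subspace to split compatibly with the grading so that projection onto $S^{+}$ is an isomorphism, giving ellipticity. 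Fredholmness of $\mathcal{D}^\partial$ on the closed manifold $\partial M$ is standard.

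The identities \eqref{eq:ind1} and \eqref{eq:ind3} then follow from Green's formula
\[
\int_M \bigl(\langle \mathcal{D} s,t\rangle-\langle s,\mathcal{D} t\rangle\bigr)
\;=\;
\int_{\partial M}\langle \sigma_{\mathcal{D}}(\nu^\flat) s,t\rangle .
\]
Since $\sigma_{\mathcal{D}}(\nu^\flat)$ preserves $S^{\pm}$ and $S^{+}\perp S^{-}$, the boundary integral vanishes for $s\in C^\infty_+(M,S)$ and $t\in C^\infty_-(M,S)$, so the formal $L^2$-adjoint of $(\mathcal{D}, C^\infty_+)$ is $(\mathcal{D}, C^\infty_-)$, giving \eqref{eq:ind1}. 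The same argument applied to the closed manifold $\partial M$, using that $\mathcal{D}^\partial$ is formally self-adjoint and swaps $S^{\pm}$, yields \eqref{eq:ind3}.

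The holographic identity \eqref{eq:ind2} is the central point. Using deformation invariance, I would first reduce to $\mathcal{D}$ having product form $\sigma_{\mathcal{D}}(\nu^\flat)(\partial_t+\mathcal{D}^\partial)$ in a collar of $\partial M$. Attaching a semi-infinite cylinder to form $M_\infty$, a separation-of-variables argument identifies the $L^2$ kernel and cokernel of $\mathcal{D}$ on $M_\infty$ with those on $M$ under the APS condition $P_{\geq 0}$, the non-negative spectral projection of $\mathcal{D}^\partial$; in particular the $L^2$ index on $M_\infty$ equals the APS index on $M$. The difference $\mathrm{ind}(\mathcal{D}, C^\infty_+)-\mathrm{ind}(\mathcal{D}, P_{\geq 0})$ is then a purely boundary quantity, namely the relative Fredholm index of the two projections onto $S^{+}$ and onto the range of $P_{\geq 0}$. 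The anti-commutation of $\sigma_{\mathcal{D}}(\nu^\flat)$ with $\mathcal{D}^\partial$ means that $\sigma_{\mathcal{D}}(\nu^\flat)$ interchanges the positive and negative spectral subspaces of $\mathcal{D}^\partial$, while preserving the grading $S^{+}\oplus S^{-}$; matching these two compatible structures identifies the relative index with $\tfrac{1}{2}\,\mathrm{ind}(\mathcal{D}^\partial\colon C^\infty(\partial M,S^{+})\to C^\infty(\partial M,S^{-}))$, and on the product cylinder the APS contribution supplies the other half. This is the technical heart of the proof: the factor of $\tfrac{1}{2}$ reflects a $\mathrm{Cl}(1,1)$-symmetry on $S|_{\partial M}$ generated by the grading $\epsilon$ and the complex structure $\sigma_{\mathcal{D}}(\nu^\flat)$, with $\mathcal{D}^\partial$ anti-commuting with both, and extracting this factor exactly once from the relative-index comparison is where the argument requires care.
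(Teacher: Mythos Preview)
Your treatment of Fredholmness and of the sign-flip identities \eqref{eq:ind1} and \eqref{eq:ind3} is fine and matches the paper's in spirit. The gap is in your argument for \eqref{eq:ind2}. You propose to compute $\mathrm{ind}(\mathcal{D},C^\infty_+)-\mathrm{ind}(\mathcal{D},P_{\geq 0})$ as a relative index of projections and assert that it equals $\tfrac{1}{2}\,\mathrm{ind}(\mathcal{D}^\partial)$, with ``the APS contribution supplying the other half''. Neither claim is substantiated, and in fact the first is not correct as stated: if one writes $H=\ker\mathcal{D}^\partial=H^+\oplus H^-$, then the relative index between the local condition $S^+$ and the APS condition (with either choice of where to put $H$) equals $\dim H^+$ or $-\dim H^-$, not $\tfrac{1}{2}(\dim H^+-\dim H^-)$. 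Moreover, to evaluate the APS index itself via your cylinder attachment you would need the APS formula, which brings in a bulk integral you have no control over; the $\mathrm{Cl}(1,1)$ symmetry forces the eta invariant to vanish but says nothing about the local index density. So as written the two pieces do not fit together to give the stated answer.

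The paper's route avoids all of this and stays on the compact manifold. The key observation is that, because the grading involution $\sigma$ anti-commutes with $\mathcal{D}^\partial$, the subspace $H^{1/2}(\partial M,S^+)\cap(V_>\oplus V_<)$ is exactly the graph of $\sigma\colon V_>\to V_<$. One then deforms the local boundary condition through the family $B_t=H^+\oplus\{v+t\sigma v:v\in V_>\}$ of elliptic boundary conditions to $B_0=H^+\oplus V_>$, a finite-dimensional enlargement of the APS condition $V_>$. The APS index is now computed without any index formula: since $\sigma_{\mathcal{D}}(\nu^\flat)$ anti-commutes with $\mathcal{D}^\partial$, the adjoint of the condition $V_>$ is $H\oplus V_>$, so $\mathrm{ind}(\mathcal{D},V_>)=-\mathrm{ind}(\mathcal{D},H\oplus V_>)=-\mathrm{ind}(\mathcal{D},V_>)-\dim H$, whence $\mathrm{ind}(\mathcal{D},V_>)=-\tfrac{1}{2}\dim H$. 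Adding back $\dim H^+$ gives $\tfrac{1}{2}(\dim H^+-\dim H^-)$. The graph deformation together with this two-line algebraic trick replaces your cylinder and relative-index machinery entirely; it is both shorter and does not require the product-structure reduction.
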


\begin{proof}
The Fredholm property of $\mathcal{D}\colon C^\infty_\pm(M,S)\to C^\infty(M,S)$ follows from Corollary~7.23, Proposition~7.24, and Theorem~7.17 combined with Corollary~8.6 in \cite{Baer-Ballmann}.
Note that the completeness and coercivity at infinity required in \cite{Baer-Ballmann} is automatic in our situation as $M$ is compact.

Since $\sigma_{\mathcal{D}}\big(\nu^\flat\big)$ preserves the bundles $S^+$ and $S^-$, the boundary conditions $s\in S^+$ and $s\in S^-$ are adjoint to each other.
Corollary~8.6 in \cite{Baer-Ballmann} implies
\begin{align*}
\mathrm{ind}(\mathcal{D}\colon C^\infty_\pm(M,S)\to C^\infty(M,S))
={}&
\dim\ker(\mathcal{D}\colon C^\infty_\pm(M,S)\to C^\infty(M,S)) \\
& -
\dim\ker(\mathcal{D}\colon C^\infty_\mp(M,S)\to C^\infty(M,S)).
\end{align*}
In particular, this proves \eqref{eq:ind1}.

Elliptic operators on compact manifolds without boundary are always Fredholm.
Since $\mathcal{D}^\partial$ is elliptic and formally self-adjoint, its $L^2$-closure is self-adjoint.
Since the $L^2$-closures of the restrictions $C^\infty\big(\partial M,S^\pm\big)\to C^\infty\big(\partial M,S^\mp\big)$ are adjoints of each other, we obtain \eqref{eq:ind3}.
It remains to prove \eqref{eq:ind2}.

Let $V_>$ and $V_<$ denote the subspaces of the Sobolev space $H^{\frac{1}{2}}(\partial M,S)$ spanned by the eigenspaces of $\mathcal{D}^\partial$ corresponding to the positive or negative eigenvalues, respectively.
Let $H\subset C^\infty(\partial M,S)$ denote the kernel of $\mathcal{D}^\partial$.
Since $\mathcal{D}^\partial$ interchanges $S^+$ and $S^-$, we may decompose $H=H^+\oplus H^-$, where $H^\pm := H\cap C^\infty\big(\partial M,S^\pm\big)$.
This gives an $L^2$-orthogonal decomposition
\[
H^{\frac{1}{2}}(\partial M,S) = V_> \oplus H^+ \oplus H^- \oplus V_< .
\]
Let $\sigma$ denote the self-adjoint bundle involution on $S$ with the property that $S^\pm$ are the eigen\-spaces to the eigenvalues $\pm 1$.
Since $\mathcal{D}^\partial$ interchanges $S^+$ and $S^-$, it anti-commutes with $\sigma$.
Thus, $\sigma$~maps the eigenspace of $\mathcal{D}^\partial$ for the eigenvalue $\lambda$ isomorphically onto that of $-\lambda$.

Now any $s\in V_>\oplus V_<$ can be expanded into eigensections, $s=\sum_{\lambda\neq0}s_\lambda$.
If furthermore \smash{$s\in H^{\frac{1}{2}}\big(\partial M,S^+\big)$}, then
\[
\sum_{\lambda\neq0}s_\lambda
=
s
=
\sigma s
=
\sum_{\lambda\neq0}\sigma s_\lambda
\]
and therefore $
\sigma s_\lambda = s_{-\lambda}$. Thus, $(V_>\oplus V_<) \cap H^{\frac{1}{2}}\big(\partial M,S^+\big)$ is the graph of the map $\sigma\colon V_> \to V_<$.
We introduce a deformation parameter $t\in [0,1]$ and consider the graph of $t\sigma$.
More precisely, we put $B_t := H^+ \oplus \{s+t\sigma s \colon s \in V_>\}$.
Each $B_t$ is an $\infty$-regular elliptic boundary condition for~$\mathcal{D}$ in the sense of \cite{Baer-Ballmann}.
By deformation invariance of the index, $\mathrm{ind}(\mathcal{D},B_1) = \mathrm{ind}(\mathcal{D},B_0)$. In other words, $\mathrm{ind}(\mathcal{D}\colon C^\infty_+(M,S)\to C^\infty(M,S))$ coincides with the index of $\mathcal{D}$ subject to the boundary condition $H^+\oplus V_>$.

We observe that $H^+\oplus V_>$ is a finite-dimensional modification of the Atiyah--Patodi--Singer boundary condition $V_>$.
Since $\sigma\big(\nu^\flat\big)$ anti-commutes with $\mathcal{D}^\partial$, the adjoint boundary condition of $V_>$ is \smash{$\big(\sigma\big(\nu^\flat\big)V_>\big)^\perp = V_<^\perp=H\oplus V_>$} where $\perp$ denotes the $L^2$-orthogonal complement in $H^{\frac{1}{2}}(\partial M,S)$. Therefore,
\begin{equation} \label{eq:index_a}
\mathrm{ind}(\mathcal{D},V_>) = -\mathrm{ind}(\mathcal{D},H\oplus V_>).
\end{equation}
By \cite[Corollary~8.8]{Baer-Ballmann},
\begin{equation} \label{eq:index_b}
\mathrm{ind}(\mathcal{D},H\oplus V_>) = \mathrm{ind}(\mathcal{D},V_>)+\dim(H).
\end{equation}
Combining \eqref{eq:index_a} and \eqref{eq:index_b} gives
\[
\mathrm{ind}(\mathcal{D},V_>)
=
-\frac{1}{2} \dim(H).
\]
Using \cite[Corollary~8.8]{Baer-Ballmann} again, we obtain
\begin{align*}
\mathrm{ind}(\mathcal{D}\colon C^\infty_+(M,S)\to C^\infty(M,S))
={}&
\mathrm{ind}\big(\mathcal{D},H^+\oplus V_>\big) =
\mathrm{ind}(\mathcal{D},V_>)+\dim\big(H^+\big) \\
={}&
\frac{1}{2} \big(\dim\big(H^+\big)-\dim(H^-)\big) \\
={}&
\frac{1}{2} \big(\dim\ker\big(\mathcal{D}^\partial\colon C^\infty\big(\partial M,S^+\big)\to C^\infty(\partial M,S^-)\big) \\
& -\dim\ker\big(\mathcal{D}^\partial\colon C^\infty(\partial M,S^-)\to C^\infty\big(\partial M,S^+\big)\big)\big) \\
={}&
\frac{1}{2} \mathrm{ind}\big(\mathcal{D}^\partial\colon C^\infty\big(\partial M,S^+\big)\to C^\infty(\partial M,S^-)\big) .
\end{align*}
This concludes the proof.
\end{proof}

The following consequence is known as cobordism invariance of the index:

\begin{Corollary}\label{cor:CobInv}
In addition to the assumptions in Theorem~{\rm\ref{thm:HolIndex}} assume that $S^\pm$ are the eigensubbundles of $S|_{\partial M}$ of the involution ${\rm i}\sigma_{\mathcal{D}}\big(\nu^\flat\big)$ for the eigenvalues $\pm 1$.
Then $S|_{\partial M} = S^+ \oplus S^-$ and the indices occurring in Theorem~{\rm\ref{thm:HolIndex}} vanish.
\end{Corollary}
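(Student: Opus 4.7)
Plan. The splitting $S|_{\partial M} = S^+ \oplus S^-$ will be immediate: because $\mathcal{D}$ is of Dirac type, $\sigma_{\mathcal{D}}(\nu^\flat)^2 = -g(\nu,\nu) = -1$, and because $\mathcal{D}$ is formally self-adjoint, $\sigma_{\mathcal{D}}(\nu^\flat)$ is skew-adjoint. Hence $i\sigma_{\mathcal{D}}(\nu^\flat)$ is a pointwise self-adjoint involution on $S|_{\partial M}$, and by hypothesis its $(+1)$- and $(-1)$-eigenbundles are $S^+$ and $S^-$.

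By Theorem~\ref{thm:HolIndex} the four indices in question differ only by a sign and a factor of two, so it suffices to prove
\[
\mathrm{ind}\big(\mathcal{D}\colon C^\infty_+(M,S) \to C^\infty(M,S)\big) = 0.
\]
The formula
\[
\mathrm{ind}\big(\mathcal{D}\colon C^\infty_\pm(M,S) \to C^\infty(M,S)\big) = \dim\ker\big(\mathcal{D}\colon C^\infty_\pm(M,S) \to C^\infty(M,S)\big) - \dim\ker\big(\mathcal{D}\colon C^\infty_\mp(M,S) \to C^\infty(M,S)\big)
\]
established in the proof of Theorem~\ref{thm:HolIndex} reduces this further to showing $\ker\big(\mathcal{D}\colon C^\infty_+(M,S) \to C^\infty(M,S)\big) = 0$; the analogous statement for $C^\infty_-$ then follows by switching the roles of $S^+$ and $S^-$.

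To that end I will combine Green's formula with unique continuation. Let $s \in C^\infty_+(M,S)$ satisfy $\mathcal{D}s = 0$. Formal self-adjointness of $\mathcal{D}$ yields
\[
0 = \int_M \big(\langle \mathcal{D}s, s\rangle - \langle s, \mathcal{D}s\rangle\big) = \pm \int_{\partial M}\langle \sigma_{\mathcal{D}}(\nu^\flat)s, s\rangle.
\]
The boundary condition $s|_{\partial M} \in S^+$ is equivalent to $\sigma_{\mathcal{D}}(\nu^\flat)s = -is$ on $\partial M$, so (with the Hermitian inner product conjugate-linear in the second slot) the boundary integrand equals $-i|s|^2$. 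Consequently $\int_{\partial M}|s|^2 = 0$, and $s|_{\partial M} \equiv 0$.

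Finally, $s \equiv 0$ will follow from unique continuation: extending $s$ by zero across $\partial M$ into a slight enlargement of $M$ produces a weak solution of $\mathcal{D}\tilde{s}=0$ (the would-be boundary distribution vanishes because $s|_{\partial M}=0$), so elliptic regularity makes $\tilde{s}$ smooth and $s$ vanishes to infinite order on $\partial M$; standard unique continuation for the Laplace-type operator $\mathcal{D}^2$ then gives $s \equiv 0$ on every connected component of $M$ meeting $\partial M$. Components of $M$ without boundary contribute $0$ to the index because $\mathcal{D}$ is self-adjoint on a closed manifold. The only nontrivial ingredient is the unique continuation step, which is classical for operators of Dirac type; everything else is direct calculation.
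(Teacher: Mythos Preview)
Your proof is correct and follows essentially the same route as the paper: Green's formula combined with the eigenbundle boundary condition forces $s|_{\partial M}=0$, then extension by zero, elliptic regularity, and unique continuation for Dirac-type operators give $s\equiv 0$ on each component meeting $\partial M$; boundaryless components are handled by self-adjointness. The only cosmetic differences are that the paper reduces to connected $M$ at the outset (rather than treating boundaryless components separately at the end) and invokes weak unique continuation directly from vanishing on the open collar rather than passing through infinite-order vanishing.
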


\begin{proof}
Without loss of generality we can assume that $M$ is connected.
If $\partial M=\varnothing$, then the assertion is obvious.
Therefore, we assume $\partial M\neq \varnothing$.

We claim that the operators $\mathcal{D}\colon C^\infty_\pm(M,S)\to C^\infty(M,S)$ have trivial kernel.
To see this, suppose that $s\in C^\infty_\pm(M,S)\to C^\infty(M,S)$ satisfies $\mathcal{D}s=0$.
Since $\mathcal{D}$ is formally self-adjoint, we obtain
\begin{align*}
0
&=
\int_M \langle \mathcal{D}s,s\rangle - \int_M \langle s, \mathcal{D}s\rangle
=
\int_{\partial M} \big\langle \sigma_{\mathcal{D}}\big(\nu^\flat\big)s,s\big\rangle
=
\mp {\rm i}\int_{\partial M} |s|^2 .
\end{align*}
Hence, $s|_{\partial M}=0$.

We show that this implies $s=0$ on all of $M$.
By adding a small collar neighborhood to $M$ along $\partial M$ we embed $M$ into an open manifold $\tilde M$.
We extend the bundle $S$ and the Dirac-type operator $\mathcal{D}$ to $\tilde M$.
We extend $s$ by zero to $\tilde M$ and obtain a continuous section $\tilde s$.
Let $\phi$ be a~compactly support test section on $\tilde M$.
Then
\begin{align*}
\int_{\tilde M}\langle \tilde s,\mathcal{D}\phi\rangle
&=
\int_M\langle s,\mathcal{D}\phi\rangle
=
\int_M\langle \mathcal{D}s,\phi\rangle + \int_{\partial M} \big\langle s,\sigma_{\mathcal{D}}\big(\nu^\flat\big)\phi\big\rangle
=
0.
\end{align*}
This shows $\mathcal{D}\tilde s=0$ in the weak sense.
By elliptic regularity theory, $\tilde s$ is smooth and $\mathcal{D}\tilde s=0$ holds classically.
Now $\tilde s$ vanishes on a nonempty open subset of $\tilde M$, $\tilde M$ is connected and Dirac-type operators have the unique continuation property, see, e.g., \cite[Theorem~8.2]{Booss-Wojciechowski}.
Thus, $\tilde s=0$ on all of $\tilde M$.
\end{proof}

We generalize Freed's Theorem~B in \cite{Freed} to Dirac-type operators.

\begin{Corollary}\label{cor:Freed}
Let $\mathcal{D}$, $\mathcal{D}^\partial$, $S$, $M$, and $\nu$ be as in Theorem~{\rm\ref{thm:HolIndex}}.
Let $N_1,\hdots,N_k$ denote the connected components of $\partial M$.
Suppose that $\varepsilon_1,\hdots,\varepsilon_k$ is a collection of integers in $\{1,-1\}$. We define a bundle $S^+$ over $\partial M$ so that the fiber of $S^+$ at a point $x \in N_j$ is the eigenspace of~\smash{${\rm i}\sigma_{\mathcal{D}}\big(\nu^\flat\big)$} with eigenvalue $\varepsilon_j$. Similarly, we define a bundle $S^-$ over $\partial M$ so that the fiber of $S^-$ at a point~${x \in \partial M}$ is the eigenspace of \smash{${\rm i}\sigma_{\mathcal{D}}\big(\nu^\flat\big)$} with eigenvalue $-\varepsilon_j$. Then $S|_{\partial M} = S^+ \oplus S^-$. Moreover,
\begin{align*}
\mathrm{ind}(\mathcal{D}\colon C^\infty_+(M,S)\to C^\infty(M,S))
&=
\sum_{\varepsilon_j=1} \mathrm{ind}\big(\mathcal{D}^\partial\colon C^\infty\big(N_j,S^+\big)\to C^\infty(N_j,S^-)\big) .
\end{align*}
\end{Corollary}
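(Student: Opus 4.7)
The plan is to apply Theorem~\ref{thm:HolIndex} once on the full manifold $M$, decompose the resulting boundary index as a sum over the components $N_j$, and then extract the desired formula by combining this with Corollary~\ref{cor:CobInv} applied to the ``uniform'' choice of signs.

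First I would verify that the bundle decomposition $S|_{\partial M}=S^+\oplus S^-$ makes sense. The Clifford relation together with $|\nu^\flat|=1$ gives $\sigma_{\mathcal{D}}\big(\nu^\flat\big)^2=-1$, and the formal self-adjointness of $\mathcal{D}$ makes $\sigma_{\mathcal{D}}\big(\nu^\flat\big)$ skew-adjoint, so \smash{${\rm i}\sigma_{\mathcal{D}}\big(\nu^\flat\big)$} is a self-adjoint involution whose $\pm 1$-eigenbundles are mutually orthogonal and fiberwise span $S|_{\partial M}$. Since the $S^\pm$ of the corollary are assembled from these eigenbundles (possibly with swapped roles on the components where $\varepsilon_j=-1$), we have $S|_{\partial M}=S^+\oplus S^-$. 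Clearly $\sigma_{\mathcal{D}}\big(\nu^\flat\big)$ preserves each eigenbundle, and because \smash{${\rm i}\sigma_{\mathcal{D}}\big(\nu^\flat\big)$} anti-commutes with $\mathcal{D}^\partial$ the latter interchanges the eigenbundles on every $N_j$, so $\mathcal{D}^\partial$ is odd with respect to this $\mathbb{Z}/2$-grading and the hypotheses of Theorem~\ref{thm:HolIndex} are met.

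Next, set $\alpha_j:=\mathrm{ind}\big(\mathcal{D}^\partial\colon C^\infty(N_j,V_+)\to C^\infty(N_j,V_-)\big)$, where $V_\pm$ denote the $\pm 1$-eigenbundles of \smash{${\rm i}\sigma_{\mathcal{D}}\big(\nu^\flat\big)$}; then \eqref{eq:ind3} of Theorem~\ref{thm:HolIndex}, applied to each component separately (or equivalently the fact that $\mathcal{D}^\partial|_{N_j}$ restricted to $V_+$ and to $V_-$ are formal adjoints of each other), gives \smash{$\mathrm{ind}\big(\mathcal{D}^\partial\colon C^\infty(N_j,V_-)\to C^\infty(N_j,V_+)\big)=-\alpha_j$}. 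On a component with $\varepsilon_j=+1$ the boundary condition of the corollary reads $s|_{N_j}\in V_+$, while on a component with $\varepsilon_j=-1$ it reads $s|_{N_j}\in V_-$. Consequently the boundary index in Theorem~\ref{thm:HolIndex}, taken for the $S^+$ of the corollary, decomposes as $\sum_{j=1}^k\varepsilon_j\alpha_j$, and \eqref{eq:ind2} yields
\[
\mathrm{ind}(\mathcal{D}\colon C^\infty_+(M,S)\to C^\infty(M,S))=\frac{1}{2}\sum_{j=1}^k\varepsilon_j\alpha_j.
\]

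Finally, apply the same theorem with the uniform choice $\varepsilon_1=\cdots=\varepsilon_k=+1$: then $S^+$ coincides with the $+1$-eigenbundle of \smash{${\rm i}\sigma_{\mathcal{D}}\big(\nu^\flat\big)$} on all of $\partial M$, so Corollary~\ref{cor:CobInv} forces $0=\tfrac{1}{2}\sum_j\alpha_j$, i.e., $\sum_{\varepsilon_j=-1}\alpha_j=-\sum_{\varepsilon_j=+1}\alpha_j$. Substituting this into the formula above collapses the signed sum to twice the positive-sign contribution, giving
\[
\mathrm{ind}(\mathcal{D}\colon C^\infty_+(M,S)\to C^\infty(M,S))=\sum_{\varepsilon_j=+1}\alpha_j,
\]
which is the claimed identity. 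The only delicate point in carrying this out is keeping the conventions straight — verifying that the odd-operator hypothesis of Theorem~\ref{thm:HolIndex} is preserved under the component-wise sign swap, and making sure that the sign bookkeeping between $\mathrm{ind}(V_+\to V_-)$ and $\mathrm{ind}(V_-\to V_+)$ is consistent — but there is no analytic obstacle beyond what Theorem~\ref{thm:HolIndex} and Corollary~\ref{cor:CobInv} already supply.
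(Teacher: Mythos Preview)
Your proposal is correct and follows essentially the same approach as the paper's proof: both apply Theorem~\ref{thm:HolIndex} to the grading of the corollary, decompose the boundary index over the $N_j$, and use Corollary~\ref{cor:CobInv} (cobordism invariance, i.e.\ the uniform-sign case) to obtain $\sum_j\alpha_j=0$, which turns the signed sum $\tfrac12\sum_j\varepsilon_j\alpha_j$ into $\sum_{\varepsilon_j=1}\alpha_j$. The only difference is cosmetic: the paper keeps the notation $S^\pm$ throughout and records the vanishing relation first, while you introduce the auxiliary eigenbundles $V_\pm$ and apply Theorem~\ref{thm:HolIndex} first; the logical content is identical.
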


\begin{proof}
By Corollary~\ref{cor:CobInv},
\begin{align*}
\begin{aligned}
0
={}&
\sum_{\varepsilon_j=1} \mathrm{ind}\big(\mathcal{D}^\partial\colon C^\infty\big(N_j,S^+\big)\to C^\infty(N_j,S^-)\big) \\
& + \sum_{\varepsilon_j=-1} \mathrm{ind}\big(\mathcal{D}^\partial\colon C^\infty(N_j,S^-)\to C^\infty\big(N_j,S^+\big)\big),
\end{aligned}
\end{align*}
hence
\begin{align*}
0
={}&
\sum_{\varepsilon_j=1} \mathrm{ind}\big(\mathcal{D}^\partial\colon C^\infty\big(N_j,S^+\big)\to C^\infty(N_j,S^-)\big) \\
& - \sum_{\varepsilon_j=-1} \mathrm{ind}\big(\mathcal{D}^\partial\colon C^\infty\big(N_j,S^+\big)\to C^\infty(N_j,S^-)\big) .
\end{align*}
Therefore,
\begin{align*}
\mathrm{ind}\big(\mathcal{D}^\partial\colon C^\infty\big(\partial M,S^+\big)\to C^\infty(\partial M,S^-)\big)
&=
\sum_j \mathrm{ind}\big(\mathcal{D}^\partial\colon C^\infty\big(N_j,S^+\big)\to C^\infty(N_j,S^-)\big) \\
&=
2 \sum_{\varepsilon_j=1} \mathrm{ind}\big(\mathcal{D}^\partial\colon C^\infty\big(N_j,S^+\big)\to C^\infty(N_j,S^-)\big).
\end{align*}
The result now follows from Theorem~\ref{thm:HolIndex}.
\end{proof}

\subsection*{Acknowledgements}

The first named author was supported by DFG-SPP 2026 ``Geometry at Infinity''.
The second named author was supported by the National Science Foundation under grant DMS-2103573 and by the Simons Foundation.
The third named author was supported by DFG-SPP 2026 ``Geometry at Infinity'' and by Columbia University.
We would like to thank the anonymous referees for their insightful comments, which helped to improve the exposition.

\pdfbookmark[1]{References}{ref}
\LastPageEnding

\end{document}